\newcommand\TheTitle{Rational Krylov for Stieltjes matrix functions: convergence and pole selection}
\newcommand\TheShortTitle{Rational Krylov for Stieltjes functions}
\newcommand\TheAuthors{Stefano Massei, Leonardo	Robol}
\headers{\TheShortTitle}{\TheAuthors}
\title{\TheTitle}
\author{%
	Stefano Massei\thanks{TU Eindhoven, Netherlands,
		\email{s.massei@tue.nl}. The work of Stefano Massei has been partially supported by the SNSF research project \emph{Fast algorithms from low-rank updates}, grant number: 200020\_178806, and by the
		INdAM/GNCS project ``Analisi di matrici sparse e
		data-sparse: metodi
		numerici ed applicazioni''.}  \and Leonardo
	Robol\thanks{Dipartimento di Matematica, Università di Pisa, Italy,
		\email{leonardo.robol@unipi.it}. The work of Leonardo Robol 
		has been partially supported by a GNCS/INdAM project ``Giovani Ricercatori'' 2018.} }
\pgfplotsset{compat=1.9}
 \newcommand{\vect} {\mathrm{vec}}
\newcommand{\Si}{\mathrm{Si}}
\newcommand{\si}{\mathrm{si}}
\newcommand{\atan}{\mathrm{atan}}
\newcommand{\pole}{\psi}
\newcommand{\Pole}{\Psi}
\newcommand{\norm}[1]{\lVert #1 \rVert}
\newcommand{\norml}[1]{\left\lVert #1 \right\rVert}
\pgfplotsset{select coords between index/.style 2 args={
		x filter/.code={
			\ifnum\coordindex<#1\fi
			\ifnum\coordindex>#2\fi
		}
}}
\definecolor{lowrankcolor}{rgb}{.75,.75,.75}
\renewcommand{\leq}{\leqslant}
\renewcommand{\geq}{\geqslant}
\renewcommand{\tilde}{\widetilde}
\numberwithin{theorem}{section}
\newcommand{\im}{\mathbf i}
\begin{document}
	\maketitle

	\begin{abstract}
Evaluating the action of a matrix function on a vector, that is $x=f(\mathcal M)v$, is an ubiquitous task in applications. When $\mathcal M$ is large, one usually relies on 
Krylov projection methods. In this paper, we
provide effective choices for the poles of the rational Krylov method for approximating $x$ when $f(z)$ is either Cauchy-Stieltjes or Laplace-Stieltjes
(or, which is equivalent, completely monotonic) and $\mathcal M$ is a positive definite 
matrix. 

Relying on the same tools used to analyze the generic situation, 
we then focus on the case $\mathcal M=I \otimes A - B^T \otimes I$, 
and $v$
obtained vectorizing a low-rank matrix; this finds application, for instance, 
in solving fractional diffusion equation on two-dimensional 
tensor grids. We see how to leverage tensorized Krylov subspaces to exploit the Kronecker structure and
we introduce an error analysis for the numerical approximation of $x$.  Pole selection strategies with explicit convergence bounds are given also in this case. 
\end{abstract}

	  \begin{keywords}
	Rational Krylov, Function of matrices, Kronecker sum, Zolotarev problem, Pole selection, 
	Stieltjes functions
\end{keywords}

\section{Introduction}
We are concerned with the evaluation of $x = f(\mathcal M) v$, where $f(z)$ is a 
Stieltjes function, which can be expressed in integral form
\begin{equation}\label{eq:general-stielt}
f(z) = \int_0^\infty g(t, z) \mu(t)\ dt, \qquad 
g(t, z) \in \left\{ 
e^{-tz}, \frac{1}{t + z}
\right\}. 
\end{equation}
The two choices for $g(t, z)$ define \emph{Laplace-Stieltjes} and \emph{Cauchy-Stieltjes} functions, respectively \cite{benzi2017approximation,widder1942laplace}. The former 
class is a superset of the latter, and coincides with the set of 
completely monotonic functions, whose derivatives satisfy $(-1)^j f^{(j)} \geq 0$ over $\mathbb R_+$ 
for any $j \in \mathbb N$. 

We are interested in two instances of this problem; 
first, we consider the case $\mathcal M := A$, 
where $A\in\mathbb C^{n\times n}$
is Hermitian positive definite with
spectrum contained in $[a,b]$, $v\in\mathbb C^{n \times s}$ is a generic
(block)
vector, and a rational Krylov method  \cite{guttel2013rational} is 
used to approximate $x = f(\mathcal M)v$. 
In this case, we want to estimate the Euclidean norm of the error $\norm{x - x_\ell}_2$, 
where $x_\ell$ is the approximation returned by
$\ell$ steps of the method. Second, 
we consider
\begin{equation} \label{eq:2dkronecker}\mathcal M := I \otimes A - B^T \otimes I\in\mathbb C^{n^2\times n^2},
\end{equation}
where $A,-B\in\mathbb C^{n\times n}$ are Hermitian positive definite with spectra contained in $[a,b]$, $v = \vect(F)\in\mathbb C^{n^2}$ 
is the vectorization of a low-rank matrix $F = U_F V_F^T\in\mathbb C^{n\times n}$, and a tensorized rational Krylov method 
\cite{benzi2017approximation}
is 
used for computing $\vect(X) = f(\mathcal M) \vect(F)$. This
problem is a generalization of the solution of 
a Sylvester equation with a low-rank right hand side, which corresponds to evaluate the function
$f(z) = z^{-1}$. 
Here, we are concerned with estimating the
quantity $\norm{X - X_\ell}_2$, where $X_\ell$ is 
the approximation obtained after $\ell$ steps.  

\subsection{Main contributions}

This paper discusses the connection between 
rational Krylov evaluation of Stieltjes matrix functions
and the parameter
dependent
rational approximation (with the given poles)
of the kernel functions $e^{-tz}$ and 
$\frac{1}{t+z}$. 

The contributions of this work are the following:	
\begin{enumerate}
	\item Corollary~\ref{cor:lapl-stielt} provides a choice of poles 
	for the rational Krylov approximation of $f(\mathcal M)v$, where $f(z)$ 
	is Laplace-Stieltjes, with an explicit error bound depending 
	on the spectrum of $A$. 
	\item Similarly, for Cauchy-Stieltjes functions, we show (in Corollary~\ref{cor:cauchy-stielt}) how leveraging 
	an approach proposed in \cite{druskin2009} allows to 
	recover a result previously given in \cite{beckermann2009error}
	using different theoretical tools. 
	\item In Section~\ref{sec:nested}, we obtain new nested sequences of poles by applying the approach of equidistributed sequences to the results in Corollary~\ref{cor:lapl-stielt}--\ref{cor:cauchy-stielt}. 
	\item In the particular case where $\mathcal M := I \otimes A - B^T \otimes I$ we extend the analysis recently proposed in \cite{benzi2017approximation} to rational Krylov subspaces. Also
	in this setting, we provide explicit choices for the poles 
	and explicit convergence bounds. For Laplace-Stieltjes functions
	a direct consequence of the analysis mentioned above leads to Corollary~\ref{cor:stieltjes-lapl-posdef}; in the Cauchy case, 
	we describe a choice of poles that enables the simultaneous solution
	of a set of parameter dependent Sylvester equations. This results
	in a practical choice of poles and an explicit error bound given 
	in Corollary~\ref{cor:stieltjes-zolotarev-posdef}. 
	\item Finally, we give results predicting the decay in the singular 
	values of $X$ where
	$\vect(X) = f(\mathcal M) \vect(F)$, $F$ is a low-rank matrix, 
	and $f(z)$ is either Laplace-Stieltjes (Theorem~\ref{thm:singvals-lapl-decay}) or 
	Cauchy-Stieltjes (Theorem~\ref{thm:singdecay}). This generalizes 
	the well-known low-rank approximability 
	of the solutions of 
	of Sylvester equations with low-rank right hand
	sides \cite{Beckermann2019}. The result for Laplace-Stieltjes follows
	by the error bound for the rational Krylov method and an Eckart-Young argument. The one for Cauchy-Stieltjes requires to combine the integral representation
	with the ADI approximant for the solution of matrix equations. 
\end{enumerate}
The error bounds obtained are summarized in Table~\ref{tab:summary}. 

We recall that completely monotonic functions
are well approximated by exponential sums \cite{braess2012nonlinear}.
Another consequence of our results in the Laplace-Stieltjes case 
is to constructively show 
that they are also well-approximated 
by rational functions. 

\begin{table}
	\centering 
	\caption{Summary of the convergence rates
		for rational Krylov methods with
		the proposed poles. The convergence rate $\rho_{[\alpha,\beta]}$ is defined by $\rho_{[\alpha,\beta]} := \exp(
		-\pi^2 /\log(4\frac{\beta}{\alpha})
		)$. }
	\label{tab:summary}
	\vskip 10pt
	\begin{tabular}{c|ccc}
		Function class & Argument & Error bound & Reference \\ \hline 
		\multirow{2}{*}{Laplace-Stieltjes} & $\mathcal M := A$ & $\norm{x - x_\ell}_2 \sim \mathcal O(\rho_{[a,b]}^{\frac{\ell}{2}})$ & Cor.~\ref{cor:lapl-stielt} \\
		& $\mathcal M := I \otimes A - B^T \otimes I$ & $\norm{X - X_\ell}_2 \sim \mathcal O(\rho_{[a,b]}^{\frac{\ell}{2}})$ & Cor.~\ref{cor:stieltjes-lapl-posdef} \\ \hline 
		\multirow{2}{*}{Cauchy-Stieltjes} & $\mathcal M := A$ & $\norm{x - x_\ell}_2 \sim \mathcal O(\rho_{[a,4b]}^{\ell})$ & Cor.~\ref{cor:cauchy-stielt} \\
		& $\mathcal M := I \otimes A - B^T \otimes I$ &
		$\norm{X - X_\ell}_2 \sim \mathcal O(\rho_{[a,2b]}^{\ell})$
		& Cor.~\ref{cor:stieltjes-zolotarev-posdef} \\ \hline 
	\end{tabular}
\end{table}

%

\subsection{Motivating problems}

Computing the action of a matrix function on
a vector is a classical task in numerical
analysis, and finds applications in 
several fields, such as complex networks \cite{benzi2013total}, 
signal processing \cite{susnjara2015accelerated}, numerical solution of ODEs \cite{hochbruck2010exponential}, and 
many others. 

Matrices with the Kronecker sum structure
as in \eqref{eq:2dkronecker}
often arise from the discretization of differential
operators 
on tensorized 2D grids. 
Applying the inverse of such matrices to a vector
is equivalent to solving a matrix equation. 
When the right hand side is a smooth function or 
has small support, the vector $v$ is the
vectorization of a numerically low-rank matrix. 
The latter property has been exploited to develop several efficient solution methods, see  \cite{simoncini2016computational} and the references
therein. Variants of these approaches have been proposed under weaker
assumptions, such as when the
smoothness is only available far from the diagonal $x = y$, as it happens
with kernel functions \cite{kressner2019low,massei2017solving}. 

In recent years, there has been an increasing interest
in models involving fractional derivatives. 
For 2D problems on rectangular grids, discretizations
by finite differences or finite elements lead to
linear systems that can be recast as the
solution of matrix equations with
particularly structured coefficients \cite{breiten2016low-rank,massei2019fast}.
However, a promising formulation which simplifies the 
design of boundary conditions relies on first
discretizing the 2D Laplacian on the chosen
domain, and then considers the action of the
matrix function $z^{-\alpha}$ (with the Laplacian
as argument) on the right hand side. This is known in the literature as the \emph{matrix transform method} \cite{yang2011novel}. In this framework, one 
has $0 < \alpha < 1$, and therefore $z^{-\alpha}$ is 
a Cauchy-Stieltjes function, a property that has been previously exploited 
for designing fast and stable restarted polynomial Krylov methods for its evaluation \cite{schweitzer2016restarting}. The algorithm proposed in this paper allows to exploit the Kronecker structure of the 2D Laplacian on
rectangular domains in the evaluation of the matrix function.  

Another motivation for our analysis
stems from the study of exponential integrators, where
it is required to evaluate the $\varphi_j(z)$ functions 
\cite{hochbruck2010exponential}, which are part of the Laplace-Stieltjes class. This 
has been the subject of deep studies concerning (restarted) polynomial and rational Krylov methods \cite{frommer2014efficient,schweitzer2016restarting}. 
However, to the best of our knowledge the Kronecker structure, 
and the associated low-rank preservation, has not been exploited in these approaches, 
despite being often present in discretization of differential operators \cite{townsend2015automatic}.

The paper is organized as follows. In Section~\ref{sec:stieltjes-functions} we recall the 
definitions and main properties of Stieltjes functions. Then, 
in Section~\ref{sec:stieltjes} we recall the rational Krylov 
method and then we analyze the simultaneous approximation
of parameter dependent exponentials and resolvents; this leads
to the choice of poles and convergence bounds for Stieltjes 
functions given in Section~\ref{sec:conv1d}. 
In Section~\ref{sec:conv2d} we provide an analysis of the
convergence of the method proposed in \cite{benzi2017approximation}
when rational Krylov subspaces are employed. In particular, in Section~\ref{sec:low-rank-approximability} 
we provide decay bounds for 
the singular values of $X$ such that $\vect(X)  = f(\mathcal M) \vect(F)$. We give some concluding remarks and outlook in 
Section~\ref{sec:conclusions}.     

\section{Laplace-Stieltjes and  Cauchy-Stieltjes functions}
\label{sec:stieltjes-functions}
We recall the definition  and the properties of Laplace-Stieltjes and 
Cauchy-Stieltjes functions
that are relevant for our analysis.  
Functions expressed as Stieltjes integrals
admit a representation of the form:
\begin{equation} \label{eq:stieltjes}
f(z) = \int_{0}^\infty g(t, z) \mu(t) \ dt, 
\end{equation}
where $\mu(t) dt$ is a (non-negative) 
measure on $[0, \infty]$, and $g(t, z)$ is integrable 
with respect to that measure. The choice of $g(t,z)$ determines the particular class of
Stieltjes functions under consideration (\emph{Laplace-Stieltjes} or \emph{Cauchy-Stieltjes}), 
and $\mu(t)$ is called the {density} of $f(z)$. $\mu(t)$ 
can be a proper function, or a distribution, e.g. a Dirac delta. In particular, we can restrict the domain of integration to a subset of $(0,\infty)$ imposing that $\mu(t)=0$ elsewhere. We refer the reader to \cite{widder1942laplace}
for further details.

\subsection{Laplace-Stieltjes functions} 
Laplace-Stieltjes functions are obtained by setting  $g(t,z) = e^{-tz}$ in \eqref{eq:stieltjes}. 

\begin{definition} \label{def:laplace-stieltjes}
	Let $f(z)$ be a function defined on 
	$(0, +\infty)$. 
	Then, $f(z)$ is a \emph{Laplace-Stieltjes} function 
	if there is a positive measure $\mu(t)dt$  on
	$\mathbb R_+$
	such that 
	\begin{equation}\label{eq:laplace-func}
	f(z) = \int_0^{\infty}  e^{-tz} \mu(t)\ dt. 
	\end{equation}
\end{definition}
Examples of Laplace–Stieltjes functions include: 
\begin{align*}
f(z) &= z^{-1} =  \int_0^\infty
e^{-tz}t\ dt, &
f(z) &= e^{-z}=\int_1^\infty
e^{-tz}\ dt, \\
f(z) &= (1-e^{-z})/z = \int_0^\infty
e^{-tz}\mu(t)\ dt,& \mu(t):=\begin{cases}
1 & t\in[0,1]\\
0&t> 1
\end{cases}.
\end{align*}
The last example is an instance of a particularly relevant
class of Laplace-Stieltjes functions, 
with applications
to exponential integrators. These are often denoted by $\varphi_j(z)$, and 
can be defined as follows:
\[
\varphi_j(z) := \int_0^\infty e^{-tz} \frac{\left[\max\{1-t, 0\}\right]^{j-1}}{(j-1)!}\ dt, \qquad 
j \geq 1. 
\]

A famous theorem of Bernstein states the equality between the set of Laplace–Stieltjes functions and those of \emph{completely monotonic
	functions in $(0, \infty)$} \cite{bernstein}, that is 
$(-1)^j f^{(j)}(z)$ is positive over $(0, \infty)$ for any $j\in\mathbb N$.

From the algorithmic point of view, the explicit knowledge of 
the Laplace density $\mu(t)$ will not play any role. Therefore, 
for the applications of the algorithms and projection methods described
here, it is only relevant to know that a function is in this class. 

\subsection{Cauchy-Stieltjes functions}

Cauchy-Stieltjes functions form a subclass of Laplace-Stieltjes functions, and 
are obtained from
\eqref{eq:stieltjes} by setting $g(t,z) = (t+z)^{-1}$. 

\begin{definition} \label{def:cauchy-stieltjes}
	Let $f(z)$ be a function defined on $\mathbb C \setminus \mathbb{R}_-$. 
	Then, $f(z)$ is a \emph{Cauchy-Stieltjes} function 
	if there is a positive measure $\mu(t)dt$ on
	$\mathbb R_+$ 
	such that 
	\begin{equation}\label{eq:cauchy-func}
	f(z) =  \int_0^{\infty} \frac{\mu(t)}{t+z}\ dt. 
	\end{equation}
\end{definition}

A few examples of Cauchy-Stieltjes functions are: 
\begin{align*}
f(z) &= \frac{\log(1 + z)}{z} =  \int_1^\infty
\frac{t^{-1}}{t + z}\ dt, &
f(z) &= \sum_{j = 1}^h \frac{\alpha_j}{z - \beta_j}, \qquad \alpha_j>0,\quad
\beta_j < 0, \\
f(z) &= z^{-\alpha} = \frac{\sin(\alpha \pi)}{\pi} \int_0^\infty
\frac{t^{-\alpha}}{t + z}\ dt, & \alpha \in (0, 1).
\end{align*}
The rational functions with poles on the negative real semi-axis 
do not belong to this class if one requires $\mu(t)$ to be
a function, but they can be obtained by setting $\mu(t) =
\sum_{j = 1}^h \alpha_j \delta(t - \beta_j)$, where $\delta(\cdot)$ 
is the Dirac delta with unit mass at $0$. For instance, 
$z^{-1}$ is obtained setting $\mu(t) := \delta(t)$. 

Since  Cauchy-Stieltjes functions are also completely monotonic in $(0,\infty)$ \cite{berg}, the set of  Cauchy-Stieltjes functions is contained in the one of Laplace-Stieltjes functions. Indeed, assuming that
$f(z)$ is a Cauchy-Stieltjes function with density $\mu_C(t)$, 
one can construct a Laplace-Stieltjes representation as follows:
\[
f(z) = \int_0^\infty \frac{\mu_C(t)}{t+z}\ dt = 
\int_0^\infty \int_0^\infty e^{-s(t+z)} \mu_C(t)\ ds\ dt = 
\int_0^\infty e^{-sz} \underbrace{\int_0^\infty e^{-st} \mu_C(t)\ dt}_{\mu_L(s)}\ ds, 
\]
where $\mu_L(s)$ defines the Laplace-Stieltjes density. In particular, 
note that if $\mu_C(t)$ is positive, so is $\mu_L(s)$. 
For a more detailed analysis
of the relation between Cauchy- and Laplace-Stieltjes functions we refer
the reader to  \cite[Section~8.4]{widder1942laplace}. 

As in the Laplace case, the explicit knowledge of 
$\mu(t)$ is not crucial for the analysis and is not
used in the algorithm. 
\section{Rational Krylov for evaluating Stieltjes functions}
\label{sec:stieltjes}
Projection schemes for the evaluation of the quantity $f(A) v$ 
work as follows: an orthonormal basis $W$ for a (small) subspace $\mathcal W\subseteq\mathbb C^{n}$ 
is computed, together with the projections  $A_{\mathcal W}:=W^* A W $ and 
$v_{\mathcal W} := W^* v$. Then, the action of $f(A)$ on $v$ is approximated 
by:
\[
f(A) v \approx x_{\mathcal W}:=W f(A_{\mathcal W}) v_{\mathcal W}.
\]
Intuitively, the choice of the subspace $\mathcal W$ is crucial for the quality of the approximation. Usually, one is interested in providing a sequence of subspaces $\mathcal W_1\subset \mathcal W_2\subset \mathcal W_3\subset\dots$ and study the convergence of $x_{\mathcal W_j}$ to $f(A)v$ as $j$ increases.  A common choice for the space $\mathcal W_j$ are Krylov subspaces. 

\subsection{Krylov subspaces}
\label{sec:krylov-spaces}

Several functions can be accurately approximated
by polynomials. 
The  idea behind the standard Krylov method is to generate a subspace that contains  all the quantities of the form $p(A)v$ for every $p(z)$  polynomial of bounded degree. 

\begin{definition}
	Let $A$ be an $n\times n$ matrix, and $v \in \mathbb C^{n \times s}$ be a (block) vector. The \emph{Krylov subspace of order $\ell$} generated by $A$ and $v$ is defined as 
	\[
	\mathcal K_\ell(A, v) := \mathrm{span}\{ 
	v, Av, \ldots, A^{\ell} v
	\}= \{p(A)v:\ \deg(p)\leq\ell\}. 
	\]
\end{definition}

Projection on Krylov subspaces is closely related to polynomial approximation. Indeed, if $f(z)$ is well approximated by $p(z)$, then $p(A)v$ is a good approximation of $f(A)v$, in the sense that 
$\norm{f(A)v - p(A)v}_2 \leq \max_{z \in [a,b]}
|f(z) - p(z)| \cdot \norm{v}_2$.

Rational Krylov subspaces
are their rational analogue, and can be defined as follows. 

\begin{definition}
	Let $A$ be a $n \times n$ matrix, $v \in \mathbb C^{n \times s}$ be a (block) vector and
	$\Pole = (\pole_1, \ldots, \pole_\ell)$, with $\pole_j \in \overline{\mathbb C}:=\mathbb C \cup \{ \infty \}$. The \emph{rational
		Krylov subspace} with poles $\Pole$ generated by $A$ and $v$ is defined as
	\[
	\mathcal{RK}_{\ell}(A, v, \Pole) = 
	q_{\ell}(A)^{-1} \mathcal K_{\ell}(A, v) = \mathrm{span}\{
	q_{\ell}(A)^{-1} v, q_{\ell}(A)^{-1} A v, \ldots, 
	q_{\ell}(A)^{-1} A^{\ell} v
	\}, 
	\]
	where $q_\ell(z) = \prod_{j = 1}^\ell (z - \pole_j)$ and if $\pole_j = \infty$, 
	then we omit the factor $(z - \pole_j)$ from $q_\ell(z)$.
\end{definition}

Note that, a Krylov subspace is a particular rational Krylov subspace
where all poles are chosen equal to $\infty$:
$\mathcal{RK}_\ell(A, v, (\infty, \ldots, \infty)) = \mathcal K_{\ell}(A, v)$. 	 
A common strategy of pole selection consists in alternating $0$ and $\infty$. The resulting vector space is known in the literature as the \emph{extended Krylov subspace} \cite{druskin1998extended}.

We denote by $\mathcal P_\ell$  the set  of polynomials of degree at most $\ell$,  
and by $\mathcal R_{\ell,\ell}$ the set of rational functions $g(z)/l(z)$ with $g(z),l(z)\in\mathcal P_{\ell}$. Given $\Pole=\{\pole_1, \ldots, \pole_\ell\}\subset\overline{\mathbb  C}$, we indicate with $\frac{\mathcal P_\ell}{\Pole}$ the set of rational functions 
of the form $g(z)/l(z)$, with $g(z) \in \mathcal P_\ell$ and $l(z):=\prod_{\pole_j\in\Pole\setminus\{\infty\}}(z-\pole_j)$. 

It is well-known that Krylov subspaces contain the action of related rational matrix functions of $A$ on the (block) vector $v$, if the poles of the
rational functions are a subset of the poles used to construct the approximation
space.
\begin{lemma}[Exactness property]\label{lem:exact}
	Let $A$ be a $n \times n$ matrix, $v \in \mathbb C^{n \times s}$ be a (block) vector and
	$\Pole = \{\pole_1, \ldots, \pole_\ell\}\subset \overline{\mathbb C}$. If $U_{\mathcal P},U_{\mathcal R}$ are orthonormal bases of $\mathcal{K}_{\ell}(A, v)$ and $\mathcal{RK}_{\ell}(A, v, \Pole)$, respectively, then:
	\begin{enumerate}
		\item $f(z)\in\mathcal P_{\ell}\quad\Longrightarrow\quad f(A)v=U_{\mathcal P}f(A_{\ell})(U_{\mathcal P}^*v)\in \mathcal K_{\ell}(A, v),\quad A_\ell=U^*_{\mathcal P}AU_{\mathcal P} $,
		\item $f(z)\in \frac{\mathcal P_{\ell}}{\Pole} \quad\Longrightarrow\quad f(A)v=U_{\mathcal R}f(A_{\ell})(U_{\mathcal R}^*v)\in \mathcal {RK}_{\ell}(A, v,\Pole)$, $\quad A_\ell=U^*_{\mathcal R}AU_{\mathcal R} $. 
	\end{enumerate} 
\end{lemma}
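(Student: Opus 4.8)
The plan is to handle the two statements in turn, with the polynomial case serving as a warm-up for the rational one. For part~(1), I would first note that $\mathcal K_\ell(A,v) = \{p(A)v : \deg p \le \ell\}$ is $A$-invariant in the weak sense that $A\cdot \mathcal K_\ell(A,v) \subseteq \mathcal K_{\ell+1}(A,v)$, but more importantly that if $f\in\mathcal P_\ell$ then $f(A)v \in \mathcal K_\ell(A,v)$ by definition. The point is to show this coincides with the compression formula $U_{\mathcal P} f(A_\ell)(U_{\mathcal P}^* v)$. Write $v = U_{\mathcal P} v_\ell$ with $v_\ell = U_{\mathcal P}^* v$ (valid since $v\in\mathcal K_\ell(A,v)$ and $U_{\mathcal P}$ is an orthonormal basis of that space). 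The key algebraic fact is a \emph{moment-matching} identity: for $0 \le k \le \ell$ one has $A^k v = U_{\mathcal P} A_\ell^k v_\ell$. I would prove this by induction on $k$. The base case $k=0$ is $v = U_{\mathcal P} v_\ell$. For the inductive step, assuming $A^k v = U_{\mathcal P} A_\ell^k v_\ell$ with $k<\ell$, the vector $A^k v$ lies in $\mathcal K_k(A,v)\subseteq\mathcal K_{\ell-1}(A,v)$, so $A^{k+1}v = A(A^k v)$ lies in $\mathcal K_\ell(A,v)$, hence $A^{k+1}v = U_{\mathcal P} U_{\mathcal P}^* A^{k+1} v$; then substitute $A^k v = U_{\mathcal P} A_\ell^k v_\ell$ and use $U_{\mathcal P}^* A U_{\mathcal P} = A_\ell$ to get $U_{\mathcal P}^* A^{k+1}v = U_{\mathcal P}^* A U_{\mathcal P} A_\ell^k v_\ell = A_\ell^{k+1} v_\ell$. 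Summing these identities against the coefficients of $f$ gives $f(A)v = U_{\mathcal P} f(A_\ell) v_\ell$, which is precisely statement~(1); membership in $\mathcal K_\ell(A,v)$ is immediate from the definition.

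For part~(2), I would reduce to part~(1) by exploiting the relation $\mathcal{RK}_\ell(A,v,\Pole) = q_\ell(A)^{-1}\mathcal K_\ell(A,v)$. Write $f(z) = g(z)/l(z)$ with $g\in\mathcal P_\ell$ and $l(z) = \prod_{\pole_j\in\Pole\setminus\{\infty\}}(z-\pole_j)$; note $l \mid q_\ell$, so $q_\ell(z)/l(z)$ is a polynomial of degree equal to the number of infinite poles, in particular of degree $\le \ell$. Hence $f(A)v = q_\ell(A)^{-1}\big[g(A)\,(q_\ell(A)/l(A))\,v\big]$. The bracketed quantity is $h(A)v$ with $h := g\cdot(q_\ell/l) \in \mathcal P_{2\ell}$ — unfortunately this is degree up to $2\ell$, not $\ell$, so I cannot directly apply part~(1) to the \emph{polynomial} Krylov space. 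The cleaner route is to work intrinsically with $U_{\mathcal R}$: set $v = U_{\mathcal R} v_\ell$ (valid since $v\in\mathcal{RK}_\ell(A,v,\Pole)$), and establish the rational analogue of the moment-matching identity, namely that for every $f\in\frac{\mathcal P_\ell}{\Pole}$ one has $f(A)v \in \mathcal{RK}_\ell(A,v,\Pole)$ \emph{and} $f(A)v = U_{\mathcal R} f(A_\ell) v_\ell$. The first half — membership — follows because $f(A)v = q_\ell(A)^{-1}(\text{polynomial of degree} \le \ell \text{ in } A)v$, which sits in $q_\ell(A)^{-1}\mathcal K_\ell(A,v) = \mathcal{RK}_\ell(A,v,\Pole)$ after checking that $g(z) q_\ell(z)/l(z)$ has degree $\le \ell$: since $l$ accounts for the finite poles and $q_\ell$ for all $\ell$ poles, $q_\ell/l$ has degree exactly $(\#\text{infinite poles}) =: m$, and $g$ has degree $\le \ell$, so the product has degree $\le \ell + m$ — still potentially too large. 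I would fix this by being careful with the definition: the standard convention (and the one consistent with $\dim\mathcal{RK}_\ell = \ell+1$) is that $\mathcal{RK}_\ell(A,v,\Pole)$ for $\Pole$ with $\ell$ finite poles equals $\mathrm{span}\{q_\ell(A)^{-1}A^k v : 0\le k \le \ell\}$, and one shows this space is invariant under multiplication by $(A-\pole_j)^{-1}$ for each $\pole_j\in\Pole$ and contains $v$; then any $f\in\frac{\mathcal P_\ell}{\Pole}$ applied to $A$ maps $v$ into this space by repeatedly applying resolvents and $A$ itself within the space.

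The cleanest organization, and the one I would ultimately adopt, is: (i) prove the space $\mathcal V := \mathcal{RK}_\ell(A,v,\Pole)$ satisfies $(A-\pole_j)^{-1}\mathcal V \subseteq \mathcal V$ for each finite $\pole_j\in\Pole$ and $A\mathcal V \subseteq \mathcal V + A^{\ell+1}q_\ell(A)^{-1}v$ — more precisely that $\mathcal V$ is closed under the map $z\mapsto \frac{1}{z-\pole_j}$ and contains constants (i.e. $v$); (ii) deduce by induction on the structure of $f$ that $f(A)v\in\mathcal V$ for all $f\in\frac{\mathcal P_\ell}{\Pole}$; (iii) for the compression formula, run the same induction in parallel, showing $f(A)v = U_{\mathcal R}f(A_\ell)U_{\mathcal R}^* v$ by verifying it for $f$ constant (trivial), for $f\mapsto (z-\pole_j)^{-1}f$ (using that $(A-\pole_j)^{-1}$ restricted to $\mathcal V$ corresponds under $U_{\mathcal R}$ to $(A_\ell - \pole_j)^{-1}$, which in turn uses that $(A-\pole_j)U_{\mathcal R}w \in \mathcal V$ implies $U_{\mathcal R}^*(A-\pole_j)U_{\mathcal R} = A_\ell - \pole_j$ acts correctly), and for $f\mapsto zf$ when that stays in the space. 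The main obstacle I anticipate is precisely this last bookkeeping step: showing that the compressed resolvent equals the resolvent of the compression, i.e. $U_{\mathcal R}^*(A-\pole_j)^{-1}U_{\mathcal R} = (A_\ell - \pole_j)^{-1}$ on the relevant subspace. This is \emph{not} true for an arbitrary subspace — it relies crucially on $(A-\pole_j)^{-1}\mathcal V \subseteq \mathcal V$ — so the argument must first nail down that invariance, which is exactly the defining property of the rational Krylov space (it is built by applying $q_\ell(A)^{-1}$, and $q_\ell$ contains all the $\pole_j$). Once invariance is in hand, the identity $(A-\pole_j)\big|_{\mathcal V}$ being a bijection of $\mathcal V$ forces $U_{\mathcal R}^*(A-\pole_j)U_{\mathcal R}$ to be its matrix representation and the inverses to match. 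Everything else is the routine linearity/multiplicativity bookkeeping that I would not spell out in full.
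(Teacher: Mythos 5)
The paper states this lemma as well known and gives no proof of its own, so I can only judge your argument on its merits. Your part~(1) is correct and standard: the moment-matching induction $A^k v = U_{\mathcal P} A_\ell^k (U_{\mathcal P}^* v)$ for $k \le \ell$, using that $A^{k+1}v$ still lies in $\mathcal K_\ell(A,v)$ so that the orthogonal projector $U_{\mathcal P}U_{\mathcal P}^*$ fixes it, is complete as written.

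Part~(2), however, hinges on a false claim. Your step~(i) asserts $(A-\pole_j)^{-1}\mathcal V \subseteq \mathcal V$ for $\mathcal V = \mathcal{RK}_\ell(A,v,\Pole)$, and you later invoke ``$(A-\pole_j)\big|_{\mathcal V}$ being a bijection of $\mathcal V$'', calling this invariance the defining property of the rational Krylov space. Neither statement holds. A generic element of $\mathcal V$ is $q_\ell(A)^{-1}p(A)v$ with $\deg p\le \ell$; applying $(A-\pole_j)^{-1}$ gives $q_\ell(A)^{-1}(A-\pole_j)^{-1}p(A)v$, which lies in $\mathcal V$ only if $(A-\pole_j)^{-1}p(A)v\in\mathcal K_\ell(A,v)$ --- false in general (already for $\ell=1$: $\mathcal V=\mathrm{span}\{v,(A-\pole_1)^{-1}v\}$ does not contain $(A-\pole_1)^{-2}v$ for generic $A,v$ when $n\ge 3$). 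Likewise $(A-\pole_j)\mathcal V\not\subseteq\mathcal V$, since multiplying by $(z-\pole_j)$ raises the numerator degree to $\ell+1$. What is true, and what you actually need, is the \emph{pointwise} version: if both $w$ and $y=(A-\pole_j)^{-1}w$ happen to lie in $\mathcal V$, then $U_{\mathcal R}^*w=U_{\mathcal R}^*(A-\pole_j)y=(A_\ell-\pole_j)U_{\mathcal R}^*y$ because $U_{\mathcal R}U_{\mathcal R}^*y=y$, whence $U_{\mathcal R}^*y=(A_\ell-\pole_j)^{-1}U_{\mathcal R}^*w$; to use this you must exhibit a chain $v_0=v$, $v_j=(A-\pole_j)^{-1}v_{j-1}$ whose members all lie in $\mathcal V$ (they do, via the nested spaces $\mathcal{RK}_j(A,v,\{\pole_1,\dots,\pole_j\})\subseteq\mathcal V$), which your plan never sets up. A much shorter repair: since $q_\ell(A)^{-1}$ commutes with $A$, one has $\mathcal{RK}_\ell(A,v,\Pole)=\mathcal K_\ell(A,x)$ with $x:=q_\ell(A)^{-1}v$, so part~(1) applied to the starting block $x$ gives $p(A)x=U_{\mathcal R}\,p(A_\ell)\,U_{\mathcal R}^*x$ for every $\deg p\le\ell$; taking $p=q_\ell=l$ (under the paper's definitions $l$ and $q_\ell$ are the \emph{same} polynomial, so your ``degree $2\ell$'' worry in the first attempted reduction is vacuous) yields $U_{\mathcal R}^*x=q_\ell(A_\ell)^{-1}U_{\mathcal R}^*v$, and then $f(A)v=g(A)x=U_{\mathcal R}\,g(A_\ell)q_\ell(A_\ell)^{-1}U_{\mathcal R}^*v=U_{\mathcal R}f(A_\ell)U_{\mathcal R}^*v$, modulo the implicitly assumed invertibility of $q_\ell(A_\ell)$ (no pole is an eigenvalue of $A_\ell$).
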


Lemma~\ref{lem:exact} enables to prove the quasi-optimality of the Galerkin projection described 
in Section~\ref{sec:stieltjes}. Indeed, if 
$\mathcal W := \mathcal{RK}(A,v,\Pole)$, then 
\cite{guttel2013rational}
\begin{equation} \label{eq:galerkin-optim}
\norm{x_{\mathcal W} - x}_2 \leq 2 \cdot \norm{v}_2 \cdot \min_{r(z)\in \frac{\mathcal P_\ell}{\Pole}} \max_{z \in [a,b]} |f(z) - r(z)|. 
\end{equation}

The optimal choice of poles for generating the rational  Krylov subspaces is problem dependent and linked to the rational approximation of the function $f(z)$ on $[a,b]$. We investigate how to perform this task when $f$ is either a  Laplace-Stieltjes or Cauchy-Stieltjes function. 

\subsection{Simultaneous approximation of resolvents and matrix exponentials}
The integral expression \eqref{eq:general-stielt}
reads as
\[
f(A) v = \int_0^\infty g(t, A) \mu(t)\ dt, \qquad 
g(t,A) \in \{ 
e^{-tA}, (tI + A)^{-1}
\}
\]
when evaluated at a matrix argument. Since the projection is a linear operation we have
\[
x_{\mathcal W}=W f(A_{\mathcal W}) v_{\mathcal W}=\int_0^\infty Wg(t,A_{\mathcal W}) v_{\mathcal W}\ \mu(t)dt.
\]
This 
suggests to look for
a space approximating uniformly well, in the 
parameter $t$, matrix exponentials
and resolvents, respectively. 
A result concerning the approximation
error in the $L^2$ norm for $t \in \im \mathbb R$ is given in 
\cite[Lemma~4.1]{druskin2009}. 
The proof is obtained exploiting some results on the skeleton approximation
of $\frac{1}{t+\lambda}$ \cite{oseledets2007lower}. We provide a pointwise
error bound, which can be obtained by  following the same steps of the proof 
of \cite[Lemma~4.1]{druskin2009}. We include 
the proof for completeness. 

\begin{theorem}\label{thm:lapl-zol}
	Let $A$ be Hermitian positive definite with spectrum contained in $[a,b]$ and $U$ be an orthonormal basis of $\mathcal{U_R}=\mathcal{RK}_\ell(A,v,\Pole)$. 
	Then, $\forall t\in[0,\infty)$, we have the following inequality:
	\begin{equation}
	\label{eq:sim-ris}
	\norm{(tI+A)^{-1}v-U(tI+A_\ell)^{-1} v_\ell}_2\leq \frac{2}{t+a} \norm{v}_2 \min_{r(z)\in\frac{\mathcal P_{\ell}}{\Pole}} \frac{\max_{z\in[a,b]}|r(z)|}{\min_{z\in(-\infty,0]}|r(z)|} 
	\end{equation}
	where  $A_\ell=U^*AU$ and $ v_\ell=U^*v$. 
\end{theorem}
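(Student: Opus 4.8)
The plan is to reduce the inequality \eqref{eq:sim-ris} to the \emph{exactness property} (Lemma~\ref{lem:exact}) together with elementary functional calculus, essentially reproducing the argument behind the quasi-optimality estimate \eqref{eq:galerkin-optim}: indeed $U(tI+A_\ell)^{-1}v_\ell$ is precisely the Galerkin approximation of $(tI+A)^{-1}v$ extracted from $\mathcal{U_R}$. Two facts will be used repeatedly: first, $A_\ell = U^*AU$ is Hermitian and, being a compression of $A$, has all its eigenvalues in $[a,b]$, so that for every scalar function $h$ one has $\norm{h(A_\ell)}_2 \le \max_{z\in[a,b]}|h(z)|$ exactly as for $A$; second, $\norm{v_\ell}_2 = \norm{U^*v}_2 \le \norm{v}_2$ because $U$ has orthonormal columns.

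First I would fix an arbitrary $r(z)\in\frac{\mathcal P_\ell}{\Pole}$ with $\min_{z\in(-\infty,0]}|r(z)|>0$ (the remaining $r$ make the right-hand side of \eqref{eq:sim-ris} infinite and are irrelevant for the minimum), and introduce, for $t\ge0$, the rational function
\[
s_t(z) := \frac{1}{t+z}\left(1-\frac{r(z)}{r(-t)}\right).
\]
Since $1-r(z)/r(-t)$ vanishes at $z=-t$, this cancels the pole of $\tfrac{1}{t+z}$ there; writing $r=g/l$ with $l(z)=\prod_{\pole_j\neq\infty}(z-\pole_j)$ one verifies that $s_t=\tilde g/l$ with $\tilde g\in\mathcal P_{\ell-1}$, hence $s_t\in\frac{\mathcal P_\ell}{\Pole}$. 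Checking this membership — that no spurious pole survives and the numerator degree does not exceed $\ell$ — is the one genuinely technical point of the proof. Once it is established, Lemma~\ref{lem:exact}(2) yields the exact identity $s_t(A)v=U\,s_t(A_\ell)\,v_\ell$.

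Then I would split the error as
\[
(tI+A)^{-1}v-U(tI+A_\ell)^{-1}v_\ell = \bigl[(tI+A)^{-1}-s_t(A)\bigr]v + U\bigl[s_t(A_\ell)-(tI+A_\ell)^{-1}\bigr]v_\ell,
\]
and use the scalar identity $\tfrac1{t+z}-s_t(z)=\tfrac{r(z)}{(t+z)\,r(-t)}$ to rewrite the two brackets as $\tfrac{1}{r(-t)}(tI+A)^{-1}r(A)$ and $-\tfrac{1}{r(-t)}(tI+A_\ell)^{-1}r(A_\ell)$, respectively. Taking Euclidean norms, using $\norm{(tI+A)^{-1}r(A)}_2 \le \max_{z\in[a,b]}\tfrac{|r(z)|}{t+z}\le\tfrac{1}{t+a}\max_{z\in[a,b]}|r(z)|$, the identical bound for $A_\ell$ (licit because $\mathrm{spec}(A_\ell)\subseteq[a,b]$), $\norm{v_\ell}_2\le\norm{v}_2$, and $|r(-t)|\ge\min_{z\in(-\infty,0]}|r(z)|$ for $t\ge0$, gives
\[
\norm{(tI+A)^{-1}v-U(tI+A_\ell)^{-1}v_\ell}_2 \le \frac{2}{t+a}\,\norm{v}_2\,\frac{\max_{z\in[a,b]}|r(z)|}{\min_{z\in(-\infty,0]}|r(z)|}.
\]
Since $r$ was arbitrary in $\frac{\mathcal P_\ell}{\Pole}$, taking the infimum produces \eqref{eq:sim-ris}. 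Apart from the membership check for $s_t$, everything is a routine application of functional calculus for Hermitian matrices; the factor $2$ arises, exactly as in \eqref{eq:galerkin-optim}, from the two terms of the splitting.
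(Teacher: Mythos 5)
Your proposal is correct and follows essentially the same route as the paper: your $s_t(z)=\frac{1}{t+z}\bigl(1-\frac{r(z)}{r(-t)}\bigr)$ is exactly the skeleton approximation $f_{\mathrm{skel}}(t,\cdot)$ used in the paper (whose relative error is $r(z)/r(-t)$), and your splitting via the exactness property is just the standard derivation of the quasi-optimality bound \eqref{eq:galerkin-optim} that the paper invokes directly. The only difference is cosmetic: you construct the approximant from an arbitrary $r\in\frac{\mathcal P_\ell}{\Pole}$ and verify its membership explicitly rather than citing the skeleton formula, which makes the argument self-contained but not substantively different.
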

\begin{proof}
	Following the construction in \cite{oseledets2007lower}, 
	we consider the function $f_{\mathrm{skel}}(\lambda, t)$ defined by 
	\[
	f_{\mathrm{skel}}(t,\lambda) := \begin{bmatrix}
	\frac{1}{t_1 + \lambda} & \ldots &  		   \frac{1}{t_\ell + \lambda}
	\end{bmatrix} M^{-1} \begin{bmatrix}
	\frac{1}{t+\lambda_1} \\ \vdots \\ \frac{1}{t+\lambda_\ell}
	\end{bmatrix}, \qquad M_{ij} = \frac{1}{t_j + \lambda_i}\in\mathbb C^{\ell\times \ell}
	,
	\]
	where $M_{ij}$ are the entries of $M$ and $(t_j, \lambda_i)$ is  an $\ell\times \ell$ grid of interpolation nodes. The function $f_{\mathrm{skel}}(t,\lambda)$ is usually called Skeleton approximation and it is practical for approximating $\frac{1}{t+\lambda}$; indeed its relative error takes the explicit form: $ 1 - (t+\lambda)f_{\mathrm{skel}}(t,\lambda) = \frac{r(\lambda)}{r(-t)}$ with $
	r(z) = \prod_{j = 1}^\ell \frac{z - \lambda_j}{t_j + z}$. If 
	this ratio of rational functions is small, then
	$f_{\mathrm{skel}}(t,\lambda) $
	is a good approximation of $\frac{1}{t+\lambda}$ and --- consequently --- $f_{\mathrm{skel}}(t,A)$ is a good approximation of $(tI+A)^{-1}$. Note that, for every fixed $t$, $f_{\mathrm{skel}}(t,\lambda) $ is a rational function in $\lambda$ with poles $-t_1, \ldots, -t_\ell$. Therefore, using the
	poles $\pole_j=-t_j$, $j=1,\ldots,\ell$ for the projection we may write, thanks to \eqref{eq:galerkin-optim}: 
	\[
	\norm{(tI + A)^{-1} v - U(tI + A)^{-1}  v_\ell}_2 \leq \frac{2}{t + a} \norm{v}_2
	\frac{\max_{z \in [a, b]} |r(z)|}{\min_{z \in (-\infty, 0]} |r(z)|}. 
	\]
	Taking the minimum over the possible choices of the parameters $\lambda_j$ we get \eqref{eq:sim-ris}.
\end{proof}

Concerning the rational approximation of the (parameter dependent) exponential, the idea is to rely on its Laplace transform that involves the resolvent:
\begin{equation}\label{eq:laplace-exp}
e^{-tA} = \frac{1}{2\pi \im}\lim_{T \to \infty} \int_{-\im T}^{\im T} e^{st} (sI+A)^{-1}\ ds. 
\end{equation}
In this formulation, it is possible to exploit the Skeleton approximation of $\frac{1}{s+\lambda}$ in order to find a good choice of poles, independently on the parameter $t$. For proving the main result we need the following technical lemma whose proof is given in the Appendix~\ref{app:lapl}.
\begin{lemma} \label{lem:laplinv}
	Let $\mathcal L^{-1}[\widehat r(s)]$ be the inverse Laplace
	transform of $\widehat r(s) = \frac{1}s\frac{p(s)}{p(-s)}$, where
	$p(s)$ is a polynomial of degree $\ell$ with positive 
	real zeros contained in $[a,b]$. Then, 
	\[
	\norm{\mathcal L^{-1}[\widehat r(s)]}_{L^\infty(\mathbb R_+)} \leq \gamma_{\ell,\kappa},\qquad \gamma_{\ell,\kappa}:=2.23 + \frac{2}{\pi} \log\left(
	4\ell \cdot \sqrt{\frac{\kappa}{\pi }}
	\right),
	\]
	where $\kappa=\frac ba$.
\end{lemma}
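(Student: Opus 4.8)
The plan is to compute the inverse Laplace transform of $\widehat r(s) = \frac{1}{s}\frac{p(s)}{p(-s)}$ explicitly and then bound its $L^\infty$ norm on $\mathbb{R}_+$. First I would observe that $\widehat r(s)$ is a rational function whose poles are simple: the pole at $s=0$ coming from the $\frac{1}{s}$ factor, and the poles at $s=z_j$ where $z_j$ ranges over the zeros of $p(-s)$, i.e.\ $z_j=-\lambda_j$ with $\lambda_j\in[a,b]$ the zeros of $p$; so the poles $z_j$ lie in $[-b,-a]$, strictly in the left half-plane. Since $p$ has degree $\ell$, a partial fraction decomposition gives
\[
\widehat r(s) = \frac{c_0}{s} + \sum_{j=1}^\ell \frac{c_j}{s + \lambda_j},
\]
and taking the inverse Laplace transform termwise yields $\mathcal L^{-1}[\widehat r(s)](x) = c_0 + \sum_{j=1}^\ell c_j e^{-\lambda_j x}$ for $x>0$. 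The constant $c_0$ equals $\lim_{s\to 0} s\,\widehat r(s) = \frac{p(0)}{p(0)} = 1$, so the transform is $1$ plus a sum of decaying exponentials; its value at $x=0^+$ is $1 + \sum_j c_j$, but since $\widehat r(s)\to 1$ as $s\to\infty$ we also get $\sum_j c_j = 0$ by the initial value theorem, so $\mathcal L^{-1}[\widehat r]$ is continuous on $[0,\infty)$ with value $0$ at the origin and limit $1$ at infinity. The real work is to bound $\sup_{x\ge 0}\bigl|1 + \sum_j c_j e^{-\lambda_j x}\bigr|$ in terms of $\ell$ and $\kappa = b/a$.

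For that bound I would not work with the partial-fraction residues directly (they can be large if the $\lambda_j$ cluster), but instead go back to the contour-integral representation of the inverse Laplace transform, placing the Bromwich contour on the imaginary axis:
\[
\mathcal L^{-1}[\widehat r(s)](x) = \frac{1}{2\pi\im}\int_{-\im\infty}^{\im\infty} e^{sx}\,\frac{1}{s}\frac{p(s)}{p(-s)}\ ds,
\]
understood in the principal-value sense at $s=0$, picking up half the residue there. The key structural fact is that on the imaginary axis $\bigl|\frac{p(\im\omega)}{p(-\im\omega)}\bigr| = 1$ for all real $\omega$, because $p(-\im\omega) = \overline{p(\im\omega)}$ when $p$ has real coefficients (its zeros are real). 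Thus the integrand is, up to the oscillatory factor $e^{\im\omega x}$ and the modulus-one Blaschke-type factor, just $\frac{1}{\im\omega}$. I would then write $\frac{p(s)}{p(-s)} = \prod_{j=1}^\ell \frac{s-\lambda_j}{s+\lambda_j}$ and handle the logarithmic derivative: the phase $\theta(\omega) = \arg\frac{p(\im\omega)}{p(-\im\omega)}$ is a sum of $\ell$ terms $-2\arctan(\omega/\lambda_j)$ (up to sign conventions), so $\theta'(\omega) = -\sum_j \frac{2\lambda_j}{\lambda_j^2+\omega^2}$ has a total variation and an $L^1$-type bound controlled by $\ell$ and by $\log(b/a)$ once one integrates $\frac{2\lambda_j}{\lambda_j^2+\omega^2}$ and uses that the $\lambda_j$ range over $[a,b]$. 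An integration by parts in $\omega$ (moving the derivative onto $\frac{e^{\im\omega x}}{\im\omega}\cdot$phase) then converts the oscillatory integral into something absolutely convergent whose size is $\frac{2}{\pi}$ times a logarithmic factor, plus the residue contribution of $1$ at the origin and a small absolute constant from the tails; this is exactly the shape $2.23 + \frac{2}{\pi}\log\bigl(4\ell\sqrt{\kappa/\pi}\bigr)$.

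The main obstacle, and where care is needed, is the rigorous treatment of the oscillatory contour integral near $s=0$ and at infinity: the factor $\frac{1}{s}$ makes the integrand only conditionally integrable, so one must justify the principal value, split the contour into a piece near the origin (where the residue of $1$ is extracted and the remainder is $O(1)$ uniformly in $x$ after one integration by parts) and the tails (where decay of $\theta'(\omega)$ like $\omega^{-2}$ gives absolute convergence), and then optimize the cut-off point to produce the clean constants. The estimate $\int \sum_j \frac{2\lambda_j}{\lambda_j^2+\omega^2}\,\frac{d\omega}{|\omega|}$-type quantity is where the $\log\kappa = \log(b/a)$ enters, and tracking the numerical constant to get $2.23$ and the $4\ell\sqrt{1/\pi}$ inside the log is the routine-but-delicate bookkeeping that I would defer to the appendix as the statement already promises. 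Since the excerpt explicitly says the proof is deferred to Appendix~\ref{app:lapl}, I would present this contour-integration argument there in full.
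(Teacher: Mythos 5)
Your plan follows essentially the same route as the paper's Appendix~B proof: a Bromwich contour pushed onto the imaginary axis with a small indentation at the origin contributing the half-residue $\tfrac12$, the observation that $\tfrac{p(\im\omega)}{p(-\im\omega)}=e^{\im\theta(\omega)}$ with $\theta(\omega)=-2\sum_j\atan(\omega/\lambda_j)$, and a three-way split of the remaining oscillatory integral with cutoffs tuned in $\ell$, $a$, $b$ (the paper uses $\nu=a\tan(\tfrac{\pi}{4\ell})$ and $\xi=4\ell b$, bounds the middle range trivially by $\log(\xi/\nu)$ — which is where $\log\kappa$ actually enters, rather than through an $L^1$ estimate of $\theta'$ — and handles the near-origin and tail pieces via $|\theta(s)|\le 2s\sum_j\gamma_j^{-1}$, a sine-integral lemma, and the asymptotic $\ell\pi+\theta(s)\le 2\ell b/s$). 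One small correction to your discarded partial-fraction preamble: $\lim_{s\to\infty}s\,\widehat r(s)=(-1)^\ell$, so $f(0^+)=(-1)^\ell$ rather than $0$; this does not affect the contour argument you actually propose.
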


\begin{theorem}\label{thm:lapl-exp-zol}
	Let $A$ be Hermitian positive definite with spectrum contained in $[a,b]$ and $U$ be an orthonormal basis of $\mathcal{U_R}=\mathcal{RK}_\ell(A,v,\Pole)$,
	where $\Pole=\{\pole_1,\dots,\pole_\ell\} \subseteq [-b,-a]$. 
	Then, $\forall t\in[0,\infty)$, we have the following inequality:
	\begin{equation}
	\label{eq:sim-exp}
	\norm{e^{-tA}v-Ue^{-tA_\ell}v_\ell}_2\leq 4\gamma_{\ell,\kappa}  \norm{v}_2 \max_{z\in[a,b]}|r_\Pole(z)|, 
	\end{equation}
	where  $A_\ell=U^*AU$, $ v_\ell=U^*v$, $\kappa := \frac{b}{a}$, $r_\Pole(z)\in\mathcal R_{\ell,\ell}$ is the rational function  defined by $r_\Pole(z) := \prod_{j = 1}^\ell 
	\frac{z + \pole_j}{z - \pole_j}$ and $\gamma_{\ell,\kappa}$ is the constant defined in Lemma~\ref{lem:laplinv}.
\end{theorem}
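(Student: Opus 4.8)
\emph{Overall approach.} Since $U e^{-tA_\ell}v_\ell$ is exactly the Galerkin approximation of $e^{-tA}v$ from $\mathcal{U_R}=\mathcal{RK}_\ell(A,v,\Pole)$, the quasi-optimality estimate \eqref{eq:galerkin-optim} reduces the whole statement to exhibiting, for every fixed $t\ge 0$, a rational function $r_t\in\frac{\mathcal P_\ell}{\Pole}$ for which $\max_{z\in[a,b]}|e^{-tz}-r_t(z)|$ is controlled by $\gamma_{\ell,\kappa}\max_{z\in[a,b]}|r_\Pole(z)|$. Exactly as in Theorem~\ref{thm:lapl-zol}, where a skeleton interpolant of $\frac{1}{s+z}$ produced a good rational approximation of the resolvent, I would build $r_t$ by inserting such a skeleton interpolant into the contour representation \eqref{eq:laplace-exp} of $e^{-tA}$ and integrating in $s$; the extra integration is then paid for by the constant $\gamma_{\ell,\kappa}$ of Lemma~\ref{lem:laplinv}.

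\emph{Construction of $r_t$ and the error identity.} Following \cite{oseledets2007lower}, let $f_{\mathrm{skel}}(s,z)$ be the symmetric $\ell\times\ell$ skeleton approximation of $\frac{1}{s+z}$ built on the node set $\{-\pole_1,\dots,-\pole_\ell\}\subseteq[a,b]$ used in \emph{both} directions (so that the ``free'' interpolation nodes of Theorem~\ref{thm:lapl-zol} are fixed to this symmetric choice). Working out the explicit relative error of the skeleton as in the proof of Theorem~\ref{thm:lapl-zol} gives
\[
\frac{1}{s+z}-f_{\mathrm{skel}}(s,z)=\frac{r_\Pole(z)}{s+z}\prod_{j=1}^\ell\frac{s+\pole_j}{s-\pole_j},
\]
and since $r_\Pole(-s)\prod_{j}\frac{s+\pole_j}{s-\pole_j}=1$ the apparent pole of $f_{\mathrm{skel}}(s,\cdot)$ at $z=-s$ is removable, so $f_{\mathrm{skel}}(s,\cdot)\in\frac{\mathcal P_{\ell-1}}{\Pole}$ with poles contained in $\Pole$. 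Setting
\[
r_t(z):=\frac{1}{2\pi\im}\lim_{T\to\infty}\int_{-\im T}^{\im T}e^{st}f_{\mathrm{skel}}(s,z)\,ds,
\]
the integration in $s$ only touches the partial-fraction coefficients of $f_{\mathrm{skel}}(s,\cdot)$, whence $r_t\in\frac{\mathcal P_{\ell-1}}{\Pole}\subseteq\frac{\mathcal P_\ell}{\Pole}$; subtracting this from \eqref{eq:laplace-exp} yields
\[
e^{-tz}-r_t(z)=r_\Pole(z)\,G_t(z),\qquad G_t(z):=\frac{1}{2\pi\im}\lim_{T\to\infty}\int_{-\im T}^{\im T}\frac{e^{st}}{s+z}\prod_{j=1}^\ell\frac{s+\pole_j}{s-\pole_j}\,ds.
\]

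\emph{Bounding $G_t$ with Lemma~\ref{lem:laplinv}.} The crux is the elementary identity $\frac{s-z}{s(s+z)}=-\frac1s+\frac{2}{s+z}$, which rearranges to
\[
\frac{2}{s+z}\prod_{j=1}^\ell\frac{s+\pole_j}{s-\pole_j}=(-1)^{\ell+1}\left[\frac1s\frac{\tilde p(s)}{\tilde p(-s)}-\frac1s\frac{p(s)}{p(-s)}\right],
\]
with $p(s):=\prod_{j=1}^\ell(s+\pole_j)$ of degree $\ell$ and $\tilde p(s):=(s-z)\prod_{j=1}^\ell(s+\pole_j)$ of degree $\ell+1$; both have all their zeros in $[a,b]$ (for $\tilde p$ because $z\in[a,b]$ and $-\pole_j\in[a,b]$), so Lemma~\ref{lem:laplinv} applies to each summand. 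Taking the inverse Laplace transform and absolute values gives $|G_t(z)|\le\tfrac12(\gamma_{\ell+1,\kappa}+\gamma_{\ell,\kappa})\le 2\gamma_{\ell,\kappa}$ for all $t\ge 0$, $z\in[a,b]$. Combining this with the error identity above and \eqref{eq:galerkin-optim},
\[
\norm{e^{-tA}v-Ue^{-tA_\ell}v_\ell}_2\le 2\norm{v}_2\max_{z\in[a,b]}|e^{-tz}-r_t(z)|\le 4\gamma_{\ell,\kappa}\norm{v}_2\max_{z\in[a,b]}|r_\Pole(z)|,
\]
which is \eqref{eq:sim-exp}.

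\emph{Main obstacle.} The difficult part is analytic rather than algebraic: on $\im\mathbb R$ the integrand $\frac{1}{s+z}\prod_j\frac{s+\pole_j}{s-\pole_j}$ decays only like $|s|^{-1}$, so the representations of $e^{-tz}$, of $r_t(z)$, and of $G_t(z)$ are merely conditionally convergent improper integrals. One must justify (i) exchanging $\lim_{T\to\infty}$ with the projection operations $W(\cdot)W^*$ and with the exact reproduction of $f_{\mathrm{skel}}(s,\cdot)$ via Lemma~\ref{lem:exact} \emph{before} the limit is taken, and (ii) the $L^\infty$ bound on the inverse Laplace transforms, where the (individually unbounded) residues cancel — this is precisely what Lemma~\ref{lem:laplinv} isolates and what makes the argument mirror the one of \cite[Lemma~4.1]{druskin2009}.
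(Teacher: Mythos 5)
Your proposal is correct, and it reaches the stated bound by a route that differs from the paper's in two concrete respects, even though the core engine — the symmetric skeleton approximation of $(s+z)^{-1}$ with nodes $-\pole_j$ inserted into the Bromwich integral, and Lemma~\ref{lem:laplinv} to control the resulting inverse Laplace transform uniformly in $t$ — is the same. First, you work at the scalar level: you build an explicit $r_t\in\frac{\mathcal P_\ell}{\Pole}$ and invoke the quasi-optimality bound \eqref{eq:galerkin-optim}, whereas the paper works at the matrix level, inserting $e^{st}f_{\mathrm{skel}}(s,A)v=e^{st}Uf_{\mathrm{skel}}(s,A_\ell)v_\ell$ via Lemma~\ref{lem:exact} and then passing to the scalar function $h(t,\lambda)=r_\Pole(\lambda)G_t(\lambda)$ by normality; since \eqref{eq:galerkin-optim} encapsulates exactly that add-and-subtract argument, these are equivalent and you end up having to bound the same quantity $G_t$. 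Second — and this is the genuinely different step — the paper bounds $\mathcal L^{-1}\bigl[\frac{1}{s+\lambda}\frac{p(s)}{p(-s)}\bigr]$ by factoring it as the convolution of $\delta(t)-\lambda e^{-\lambda t}$ (total variation $2$) with $\mathcal L^{-1}\bigl[\frac1s\frac{p(s)}{p(-s)}\bigr]$ and applying Young's inequality, which needs Lemma~\ref{lem:laplinv} only at degree $\ell$ and produces the factor $2\cdot 2\gamma_{\ell,\kappa}$; your partial-fraction identity $\frac{2}{s+z}=\frac1s+\frac{s-z}{s(s+z)}$ instead writes the same transform as a difference of two terms each of the exact form covered by Lemma~\ref{lem:laplinv}, at degrees $\ell$ and $\ell+1$ (the extra zero $z\in[a,b]$ of $\tilde p$ being admissible since the lemma only requires the zeros to lie in $[a,b]$). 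Your version avoids the convolution with a distribution and in fact gives the slightly sharper constant $\gamma_{\ell,\kappa}+\gamma_{\ell+1,\kappa}\approx 2\gamma_{\ell,\kappa}$ in place of $4\gamma_{\ell,\kappa}$ (note $\gamma_{\ell+1,\kappa}-\gamma_{\ell,\kappa}=\frac2\pi\log\frac{\ell+1}{\ell}\leq\frac{2\log 2}{\pi}$, so your claimed inequality $\frac12(\gamma_{\ell+1,\kappa}+\gamma_{\ell,\kappa})\leq 2\gamma_{\ell,\kappa}$ holds with much room to spare); the price is a second invocation of Lemma~\ref{lem:laplinv} at degree $\ell+1$. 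You also correctly identify the conditional convergence of the improper integrals as the delicate analytic point; the paper handles it the same way, by keeping the symmetric truncation $\lim_{T\to\infty}\int_{-\im T}^{\im T}$ throughout and splitting only into limits that individually exist.
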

\begin{proof}
	We consider the Skeleton approximation of $\frac{1}{s+\lambda}$
	by restricting  the choice 
	of poles in both variables to $ \Pole$
	\[
	f_{\mathrm{skel}}(s,\lambda) := \begin{bmatrix}
	\frac{1}{\lambda - \pole_1} & \ldots &  		   \frac{1}{\lambda - \pole_\ell}
	\end{bmatrix} M^{-1} \begin{bmatrix}
	\frac{1}{s-\pole_1} \\ \vdots \\ \frac{1}{s-\pole_\ell}
	\end{bmatrix},\qquad M_{ij}=-\frac{1}{\pole_i+\pole_j},
	\]
	where $M_{ij}$ denote the entries of $M$.
	Then, by using \eqref{eq:laplace-exp} for $A$ and $A_{\ell}$ we get
	\[
	e^{-tA}v-Ue^{-tA_\ell}v_\ell = \frac{1}{2\pi \im}\lim_{T \to \infty} \int_{-\im T}^{\im T} e^{st} (sI+A)^{-1}v-e^{st} U(sI+A_\ell)^{-1}v_\ell\ ds. 
	\]
	Adding and removing the
	term $e^{st}f_{\mathrm{skel}}(s,A)v = e^{st}Uf_{\mathrm{skel}}(s,A_\ell)U^*v$
	inside the integral (the equality holds
	thanks to Lemma~\ref{lem:exact})
	we obtain the error expression
	\begin{align*}
	e^{-tA} v - U e^{-tA_\ell} v_\ell &= \frac{1}{2\pi \im}
	\lim_{T \to \infty} \int_{-\im T}^{\im T} e^{st} \left[ (sI+A)^{-1} v - U (sI+A_\ell)^{-1}  v_\ell\right]\ ds \\
	&= \frac{1}{2\pi \im} \lim_{T \to \infty} \int_{-\im T}^{\im T} e^{st} (sI+A)^{-1} \left[ I -(sI+A) f_{\mathrm{skel}}(s,A) \right] v\ ds \\
	&- \frac{1}{2\pi \im} \lim_{T \to \infty} U \int_{-\im T}^{\im T} e^{st} (sI+A_\ell)^{-1} \left[ I - (sI+A_\ell)f_{\mathrm{skel}}(s,A_\ell) \right]  v_\ell\ ds\\
	&= \frac{1}{2\pi \im} \lim_{T \to \infty} \int_{-\im T}^{\im T} e^{st} (sI+A)^{-1} r_\Pole(A)r_\Pole(-s)^{-1} v\ ds \\
	&- \frac{1}{2\pi \im} \lim_{T \to \infty} U \int_{-\im T}^{\im T} e^{st} (sI+A_\ell)^{-1} r_\Pole(A_\ell)r_\Pole(-s)^{-1}  v_\ell\ ds.
	\end{align*}
	Since $A$ and $A_\ell$ are normal, the above integrals
	can be controlled by the maximum of the corresponding scalar
	functions on the spectrum of $A$ (and $A_\ell$), which
	yields the bound 
	\[
	\norm{e^{-tA} v - U e^{-tA_\ell}  v_\ell}_2 \leq 
	2 \max_{\lambda \in [a, b]} |h(t,\lambda)|, \qquad 
	h(t,\lambda) := \frac{1}{2\pi \im} \lim_{T \to \infty}  \int_{-iT}^{iT} e^{st} \frac{1}{s+\lambda} \frac{r_\Pole(\lambda)}{r_\Pole(-s)}\ ds. 
	\]
	We note that $r_\Pole(\lambda)$ can be pulled out of the integral, 
	since it does not depend on $s$, and thus the above can be rewritten as 
	\begin{align*}
	h(t,\lambda) = r_\Pole(\lambda) \cdot \mathcal L^{-1}\left[
	\frac{1}{\lambda + s} \frac{p(s)}{p(-s)}
	\right](t) &=r_\Pole(\lambda) \cdot \mathcal L^{-1}\left[\frac{s}{s+\lambda}\right]\star \mathcal L^{-1}\left[
	\frac{1}{s} \frac{p(s)}{p(-s)}
	\right](t)\\
	&=r_\Pole(\lambda) \cdot (\delta(t)-\lambda e^{-\lambda t})\star \mathcal L^{-1}\left[
	\frac{1}{s} \frac{p(s)}{p(-s)}
	\right](t),
	\end{align*}
	where $p(s)$ is as in Lemma~\ref{lem:laplinv} and $\delta(t)$ indicates the Dirac delta function. Since the $1$-norm of $\delta(t)-\lambda e^{-t\lambda}$ is equal to $2$, using Young's inequality we can bound 
	$\norm{h(t,\lambda)}_{\infty} \leq 2 \norm{\mathcal L^{-1}\left[
		\frac{1}{s} \frac{p(s)}{p(-s)}
		\right]}_\infty$. Therefore, we need to estimate the infinity norm of $\mathcal L^{-1}\left[
	\frac{1}{s} \frac{p(s)}{p(-s)}
	\right](t)$.
	Such inverse Laplace transform can be
	uniformly bounded in $t$ by using 	Lemma~\ref{lem:laplinv} with a constant 
	that only depends on $\ell$ and $b/a$:
	\[
	|h(\lambda,  t)| \leq 2\gamma_{\ell,\kappa}|r_\Pole(\lambda)|. 
	\]
	This completes the proof. 
\end{proof}
\begin{remark} \label{rem:laplinvnotoptimal}
	The constant provided by Lemma~\ref{lem:laplinv} is likely 
	not optimal. Indeed, experimentally it seems to hold 
	that 
	$\gamma_{\ell,\kappa} = 1$ for any choice of poles in the 
	negative real axis --- not necessarily contained in $[-b,-a]$ --- and this has been verified in 
	many examples. If this is proved, then the statement
	of Theorem~\ref{thm:lapl-zol} can be made sharper 
	by 
	removing the factor $\gamma_{\ell,\kappa}$. 
\end{remark}

\subsection{Bounds for the rational approximation problems}
Theorem~\ref{thm:lapl-zol} and Theorem~\ref{thm:lapl-exp-zol} show the connection between the error norm and certain rational approximation problems. In this section we discuss the optimal values of such problems in the cases of interests.
\begin{definition} \label{def:theta}
	Let $\Pole\subset \overline{\mathbb C}$ 
	be a finite set, and $I_1, I_2$ closed subsets of $\overline{\mathbb C}$. Then, we define\footnote{We allow the slight abuse of notation of writing 
		$|r(\infty)|$ as the limit of $|r(z)|$ as $|z| \to \infty$, in the case either
		$I_1$ or $I_2$ contains the point at infinity.}
	\[
	\theta_\ell(I_1, I_2, \Pole) :=  \min_{r(z)\in \frac{\mathcal P_\ell}{\Pole}} \frac{\max_{I_1}|r(z)|}{\min_{I_2}|r(z)|}.
	\]
	
\end{definition}

The $\theta_\ell$ functions enjoy some invariance and inclusion properties, 
which we report here, and will be extensively used in the rest of the paper.

\begin{lemma} \label{lem:tp}
	Let $I_1, I_2$ be subsets of the complex plane, and $\Pole\subset\overline{\mathbb C}$. Then, 
	the map $\theta_\ell$ satisfies the following properties:
	\begin{enumerate}[label=(\roman*)]
		\item \label{lem:tp:shift} (shift invariance) For any $t \in \mathbb C$, it holds
		$\theta_\ell(I_1 + t, I_2 + t, \Pole+t) = \theta(I_1, I_2, \Pole)$.
		\item (monotonicity) \label{lem:tp:inclusion} 
		$\theta_\ell(I_1, I_2, \Pole)$ is monotonic with respect to the
		inclusion on the parameters $I_1$ and $I_2$:
		\[
		I_1 \subseteq I_1', I_2 \subseteq I_2' \implies 
		\theta_\ell(I_1, I_2, \Pole) \leq \theta_\ell(I_1', I_2', \Pole). 
		\]
		\item (M\"obius invariance) \label{lem:tp:mobius}
		If $M(z)$ is a M\"obius transform, that is a rational
		function $M(z) = (\alpha z + \beta) / (\gamma z + \delta)$ with $\alpha\delta \neq \beta\gamma$, then 
		\[
		\theta_\ell(I_1, I_2, \Pole) = \theta_\ell(M(I_1), M(I_2), M(\Pole)).
		\]
	\end{enumerate}
\end{lemma}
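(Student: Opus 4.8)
The plan is to deduce all three properties from a single structural reformulation of the admissible set $\frac{\mathcal P_\ell}{\Pole}$. The first step is to observe that, since $\Pole$ consists of exactly $\ell$ points of $\overline{\mathbb C}$ counted with multiplicity and a rational function, viewed as a map $\overline{\mathbb C}\to\overline{\mathbb C}$, has as many poles as zeros, one has
\[
\frac{\mathcal P_\ell}{\Pole} = \bigl\{\, r : r \text{ rational, poles of } r \text{ (counted with multiplicity) form a sub-multiset of } \Pole \,\bigr\}.
\]
Indeed, writing $r = g/l$ with $\deg g \le \ell$ and $l(z) = \prod_{\pole_j \neq \infty}(z - \pole_j)$ of degree $m$, the finite poles of $r$ lie among the finite entries of $\Pole$, while the pole of $r$ at infinity, if present, has order $\deg g - m \le \ell - m$, and $\ell - m$ is exactly the multiplicity of $\infty$ in $\Pole$; conversely any $r$ with this pole structure is recovered by clearing denominators, the degree bound closing because a pole of order $q$ at infinity requires $q$ copies of $\infty$ in $\Pole$. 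I expect this bookkeeping at the point at infinity to be the only genuinely delicate point of the argument, and it resurfaces for M\"obius invariance when $M$ sends a finite pole of $\Pole$ to $\infty$, or $\infty$ to a finite point.

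Given the reformulation, shift invariance and M\"obius invariance each reduce to producing a ratio-preserving bijection between admissible functions. For shift invariance, the substitution $r(z) \mapsto r(z-t)$ translates every pole by $t$, hence maps $\frac{\mathcal P_\ell}{\Pole}$ bijectively onto $\frac{\mathcal P_\ell}{\Pole+t}$; moreover $\max_{z\in I_1+t}|r(z-t)| = \max_{z\in I_1}|r(z)|$, and likewise for the minimum over $I_2+t$, so the quotient in Definition~\ref{def:theta} is literally unchanged and the two minima coincide. For M\"obius invariance the translation is replaced by post-composition with $M^{-1}$: since $M$ is a bijection of $\overline{\mathbb C}$ preserving pole orders, $r \mapsto r \circ M^{-1}$ is a bijection $\frac{\mathcal P_\ell}{\Pole} \to \frac{\mathcal P_\ell}{M(\Pole)}$ with inverse $\tilde r \mapsto \tilde r \circ M$; and because $|r|$ takes on $I_1$, respectively $I_2$, exactly the values $|r\circ M^{-1}|$ takes on $M(I_1)$, respectively $M(I_2)$, both the numerator and the denominator of the minimized quotient are preserved.

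Monotonicity needs no change of variables: the pole set is fixed, and for every admissible $r$, enlarging $I_1$ cannot decrease $\max_{I_1}|r(z)|$ while enlarging $I_2$ cannot increase $\min_{I_2}|r(z)|$, so the quotient $\frac{\max_{I_1}|r(z)|}{\min_{I_2}|r(z)|}$ is monotone with respect to the inclusions $I_1\subseteq I_1'$ and $I_2\subseteq I_2'$; taking the minimum over $r \in \frac{\mathcal P_\ell}{\Pole}$ on both sides, and noting that the minimizer realizing $\theta_\ell(I_1',I_2',\Pole)$ is an admissible competitor for $\theta_\ell(I_1,I_2,\Pole)$, gives $\theta_\ell(I_1,I_2,\Pole)\le\theta_\ell(I_1',I_2',\Pole)$. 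Throughout, the footnote convention $|r(\infty)| = \lim_{|z|\to\infty}|r(z)|$ is harmless, since this limit and post-composition with a M\"obius map are compatible with the $\max$ and $\min$ operations.
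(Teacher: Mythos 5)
Your proposal is correct and follows essentially the same route as the paper: monotonicity by comparing $\max$ and $\min$ over nested sets, and shift/M\"obius invariance by observing that composition with $M^{-1}$ gives a ratio-preserving bijection between $\frac{\mathcal P_\ell}{\Pole}$ and $\frac{\mathcal P_\ell}{M(\Pole)}$ (the paper derives the shift case as a special instance of the M\"obius case, whereas you treat it directly, which is immaterial). Your explicit bookkeeping of pole multiplicities at infinity makes rigorous a point the paper's proof passes over with the word ``generically,'' but it does not change the argument.
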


\begin{proof}
	Property~\ref{lem:tp:shift} follows by \ref{lem:tp:mobius} because applying a shift is a particular M\"obius transformation. Note that, generically,
	when we compose a rational function $r(z) = \frac{p(z)}{h(z)} \in \frac{\mathcal P_\ell}{\Pole}$ 
	with $M^{-1}(z)$ we obtain another 
	rational function of  (at most) the same degree and with poles 
	$M(\Pole)$. Hence, 
	we  obtain 
	\begin{align*}
	\theta_\ell(I_1, I_2, \Pole) &= \min_{r(z)\in \frac{\mathcal P_\ell}{\Pole}} \frac{\max_{I_1}|r(z)|}{\min_{I_2}|r(z)|} =
	\min_{r(z)\in \frac{\mathcal P_\ell}{\Pole}} \frac{\max_{M(I_1)}|r(M^{-1}(z))|}{\min_{M(I_2)}|r(M^{-1}(z))|}\\&=\min_{r(z)\in \frac{\mathcal P_\ell}{M(\Pole)}} \frac{\max_{M(I_1)}|r(z)|}{\min_{M(I_2)}|r(z)|} = \theta_\ell(M(I_1), M(I_2), M(\Pole)). 
	\end{align*}
	Property~\ref{lem:tp:inclusion} follows easily from the fact that the maximum taken on a larger set is larger, 
	and the minimum taken on a larger set is smaller. 
\end{proof}
Now, we  consider the related optimization problem, obtained by allowing
$\Pole$ to vary:
\begin{equation}\label{eq:zol3}
\min_{\substack{\Pole\subset\overline{\mathbb  C},  |\Pole|=\ell} }\theta_\ell(I_1,I_2,\Pole) = \min_{r(z) \in \mathcal R_{\ell,\ell}} \frac{\max_{z \in I_1} |r(z)|}{\min_{z \in I_2} |r(z)|}.
\end{equation}
The latter was posed and studied by Zolotarev
in 1877 \cite{zolotarev1877application}, and it is commonly known as
the \emph{third Zolotarev problem}. We refer to \cite{Beckermann2011}
for a  modern reference where the theory is used to recover bounds on
the convergence of rational Krylov methods and ADI iterations for solving
Sylvester equations. 

In the case $I_1=-I_2=[a,b]$ \eqref{eq:zol3} simplifies to 
\[\min_{r(z) \in \mathcal R_{\ell,\ell}} \frac{\max_{z \in [a,b]} |r(z)|}{\min_{z \in [a,b]} |r(-z)|}\]
which admits the following explicit estimate. 
\begin{theorem}[Zolotarev] \label{thm:zolotarev}
	Let $I = [a, b]$, with $0<a<b$. Then 
	\[
	\min_{\Pole\subset\overline{\mathbb  C}, \ |\Pole|=\ell }\theta_\ell(I,-I,\Pole) \leq 4\rho_{[a,b]}^{\ell},\qquad \rho_{[a,b]}:=\exp\left(-\frac{\pi^2}{\log\left(4 \kappa\right)}\right),\qquad \kappa=\frac ba.
	\]
	In addition, the optimal rational function $r^{[a,b]}_\ell(z)$ that realizes the minimum
	has the form
	\[
	r^{[a,b]}_\ell(z) := \frac{p^{[a,b]}_\ell(z)}{p^{[a,b]}_\ell(-z)}, \qquad 
	p^{[a,b]}_\ell(z) := \prod_{j = 1}^\ell (z + \pole^{[a,b]}_{j,\ell}), \qquad 
	\pole^{[a,b]}_{j,\ell} \in -I. 
	\]
	We denote by $\Pole_{\ell}^{[a,b]} := \{ \pole_{1,\ell}^{[a,b]}, \dots, \pole_{\ell,\ell}^{[a,b]} \}$ the set of poles of $r^{[a,b]}_\ell(z)$.
\end{theorem}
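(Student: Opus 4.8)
The plan is to reduce the problem, via the invariance properties of $\theta_\ell$ in Lemma~\ref{lem:tp}, to a canonical symmetric configuration, invoke Zolotarev's classical solution of the third problem \eqref{eq:zol3} in terms of Jacobi elliptic functions, and finally estimate the resulting elliptic quantities to obtain both the bound $4\rho_{[a,b]}^\ell$ and the stated structure of the optimal rational function. Concretely, I would first apply the M\"obius transform $M(z) = \frac{z - \sqrt{ab}}{z + \sqrt{ab}}$. By Lemma~\ref{lem:tp}\ref{lem:tp:mobius} it leaves $\theta_\ell$ (and hence the quantity in \eqref{eq:zol3}) unchanged, and a short direct computation gives $M([a,b]) = [-t,t]$ and $M([-b,-a]) = \overline{\mathbb R}\setminus(-1/t,1/t)$, where $t := \frac{\sqrt\kappa - 1}{\sqrt\kappa + 1}\in(0,1)$ and $\kappa = b/a$. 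Thus $\min_{|\Pole|=\ell}\theta_\ell([a,b],-[a,b],\Pole)$ equals the minimum over $r\in\mathcal R_{\ell,\ell}$ of $\frac{\max_{[-t,t]}|r|}{\min_{\overline{\mathbb R}\setminus(-1/t,1/t)}|r|}$, i.e.\ the third Zolotarev problem for the condenser with plates $[-t,t]$ and $\overline{\mathbb R}\setminus(-1/t,1/t)$; a further M\"obius map brings this to the textbook symmetric pair of real intervals on which Zolotarev's formulas are usually stated.

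\textbf{The extremal function and its structure.} Zolotarev's solution (see \cite{zolotarev1877application} and the modern account in \cite{Beckermann2011}) shows that the minimum is attained by a rational function of exact degree $(\ell,\ell)$, unique up to scaling, whose zeros and poles are prescribed through the Jacobi $\mathrm{sn}$ function evaluated along an arithmetic progression of points --- equivalently, as images of equispaced points under the conformal map of the condenser complement onto an annulus. To obtain the claimed form $r_\ell^{[a,b]}(z) = p_\ell^{[a,b]}(z)/p_\ell^{[a,b]}(-z)$ with the zeros of $p_\ell^{[a,b]}$ in $[a,b]$, I would exploit the $z\mapsto -z$ symmetry of the pair $([a,b],-[a,b])$: if $r^\ast$ is extremal then $z\mapsto 1/r^\ast(-z)$ is extremal with the same value, so by uniqueness $r^\ast(z)\,r^\ast(-z)$ is a constant; comparing zeros and poles then forces $r^\ast(z) = c\, p(z)/p(-z)$, and the equioscillation characterization of the Zolotarev extremal places the zeros of $p$ in $[a,b]$, hence the poles $\pole_{j,\ell}^{[a,b]}$ in $-[a,b]$, after pulling back through $M$.

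\textbf{The quantitative bound.} Zolotarev's formula expresses the minimal value through the elliptic nome $q = \exp(-\pi K(m')/K(m))$ of the condenser --- equivalently, through its conformal modulus --- in the form $\min_{|\Pole|=\ell}\theta_\ell([a,b],-[a,b],\Pole)\le 4\,q^{\ell}$, the prefactor $4$ and the geometric factor $q^\ell$ being classical. It then remains to show $q\le\rho_{[a,b]}$. Writing the modulus $m$ in terms of $\kappa$ one finds that $1-m$ behaves like a constant multiple of $\kappa^{-1/2}$, so that $K(m)$ grows like $\tfrac12\log\kappa$ while $K(m')$ stays near $\pi/2$; feeding this into standard monotonicity and asymptotic inequalities for complete elliptic integrals (Landen's transformation and the logarithmic bounds on $K$ near the singular modulus) gives $q = \exp(-\pi K(m')/K(m)) \le \exp\!\left(-\pi^2/\log(4\kappa)\right) = \rho_{[a,b]}$, where the $4$ inside the logarithm is exactly what absorbs the lower-order terms uniformly in $\kappa>1$.

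\textbf{Expected main obstacle.} The reduction and the symmetry argument are routine; the genuine work lies in the last step --- converting the exact elliptic-function value into fully explicit, non-asymptotic constants. Pinning down the prefactor $4$ requires care with the relevant theta-function series, and extracting the clean $\log(4\kappa)$ (rather than a messier expression) needs precisely the right chain of classical elliptic-integral estimates; one should also verify explicitly that the M\"obius reductions preserve the degree $\ell$ and do not move the poles out of $-[a,b]$, which holds because the maps used send the real line to itself and preserve rational degree. A reader willing to take the estimate as known can instead follow \cite{Beckermann2011}, where the bound is derived directly from the Green's function of the condenser $([a,b],[-b,-a])$.
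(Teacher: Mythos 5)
Your proposal is a sound reconstruction of the classical argument, but you should be aware that the paper does not actually prove this theorem: it is stated as a known result, imported wholesale from Zolotarev's 1877 paper and the modern accounts in \cite{Beckermann2011} and \cite{druskin2009}, with the remark immediately following it explaining that the sharp version involves the Gr\"otzsch ring function $\mu(\kappa^{-1})$ and that the stated $\log(4\kappa)$ form is the (slightly suboptimal) consequence of the classical inequality $\mu(r)\le\log(4/r)$. So there is no proof in the paper to compare against; what you have written is essentially the standard derivation that the cited references contain. Your individual steps check out: the M\"obius map $M(z)=(z-\sqrt{ab})/(z+\sqrt{ab})$ does send $[a,b]$ to $[-t,t]$ and $[-b,-a]$ to $\overline{\mathbb R}\setminus(-1/t,1/t)$ with $t=(\sqrt\kappa-1)/(\sqrt\kappa+1)$ (the pole at $-\sqrt{ab}$ lies inside $[-b,-a]$, which is why the image wraps through infinity), and the symmetry-plus-uniqueness argument correctly forces $r^*(z)r^*(-z)=\mathrm{const}$ and hence the form $p(z)/p(-z)$ with poles in $-I$. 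The one place where your sketch is heavier than necessary is the step you flag as the ``genuine work'': you do not need to re-derive the logarithmic asymptotics of $K$ near the singular modulus, because the classical bound already reads $\min_\Pole\theta_\ell(I,-I,\Pole)\le 4\exp(-\ell\pi^2/\mu(\kappa^{-1}))$, and the passage to $\log(4\kappa)$ is the single textbook inequality $\mu(r)\le\log(4/r)$ (see \cite[Section~17.3]{abramowitz1965handbook}); the prefactor $4$ is already part of that classical statement and requires no separate theta-function analysis. Also note that $[a,b]\cup[-b,-a]$ is itself one of the standard canonical configurations, so your initial reduction to $[-t,t]$ versus the exterior interval, while valid, is an optional detour rather than a required normalization.
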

Explicit expression for the elements of $\Pole_\ell^{[a,b]}$ are available in terms of elliptic functions, see \cite[Theorem 4.2]{druskin2009}.

\begin{remark}
	The original version of Zolotarev's result involves
	$\mathrm{exp}(-\frac{\pi^2}{\mu(\kappa^{-1})})$
	in place of $\rho_{[a, b]}$, 
	where $\mu(\cdot)$ is the Gr\"otzsch ring function. For simplicity, 
	in this paper 
	we prefer the slightly suboptimal form involving the logarithm. 
	We remark that for large $\kappa$ (which is usually the case when 
	considering rational Krylov methods) the difference is negligible
	\cite[Section~17.3]{abramowitz1965handbook}. 
\end{remark}

We use Theorem~\ref{thm:zolotarev} and the M\"obius invariance property as building blocks for bounding \eqref{eq:sim-ris}.
The idea is to map the set $[-\infty, 0] \cup [a, b]$ into $[-1, -\widehat a]\cup[\widehat a, 1]$ --- for some $\widehat a\in (0,1)$ --- with a M\"obius transformation; then  make use of Theorem~\ref{thm:zolotarev} and Lemma~\ref{lem:tp}-\ref{lem:tp:mobius} to provide
a convenient choice of $\Pole$ for the original problem. 

\begin{lemma} \label{lem:mobius1d}
	The M\"obius transformation \[
	T_C(z):=\frac{\Delta + z - b}{\Delta - z + b},  \qquad 
	\Delta := \sqrt{b^2 - ab},
	\]
	maps $[-\infty,0]\cup[a,b]$ into $[-1, -\widehat a]\cup[\widehat a, 1]$, with $\widehat a := \frac{\Delta + a - b}{\Delta - a + b}=\frac{b-\Delta }{\Delta +b}$. The inverse map
	$T_C(z)^{-1}$ is given by:
	\[
	T_C^{-1}(z):= \frac{(b+\Delta)z+b-\Delta}{1+z}.
	\]
	Moreover, for any $0<a<b$ it holds $\widehat a^{-1} \leq \frac{4b}{a}$, 
	and therefore 
	$\rho_{[\widehat a,1]}\leq \rho_{[a,4b]}$.
\end{lemma}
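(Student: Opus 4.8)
The plan is to verify the three assertions of the lemma by direct computation, using the elementary fact that a M\"obius transformation with real coefficients maps $\overline{\mathbb{R}}$ to itself and is strictly monotone on any interval avoiding its (real) pole. Writing $T_C(z)=\frac{z+(\Delta-b)}{-z+(\Delta+b)}$, I would first evaluate it at the four relevant points. One has $T_C(-\infty)=-1$ (the ratio of leading coefficients), $T_C(b)=\Delta/\Delta=1$, and $T_C(0)=\frac{\Delta-b}{\Delta+b}$; since $\Delta=\sqrt{b^2-ab}=\sqrt{b(b-a)}<b$ for $0<a<b$, this last value equals $-\frac{b-\Delta}{b+\Delta}=-\widehat a$. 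For $z=a$ one gets $T_C(a)=\frac{\Delta-(b-a)}{\Delta+(b-a)}$, and a short cross-multiplication using $\Delta^2=b^2-ab$ shows that this coincides with $\frac{b-\Delta}{b+\Delta}=\widehat a$ (both sides of the cross-multiplied identity reduce to $a\Delta$). The pole of $T_C$ lies at $z=\Delta+b>b$, so on $(-\infty,\Delta+b)\supseteq(-\infty,b]$ the map is differentiable with $T_C'(z)=\frac{2\Delta}{(\Delta+b-z)^2}>0$; hence $T_C$ is strictly increasing there, and combining this with the endpoint values yields $T_C([-\infty,0])=[-1,-\widehat a]$ and $T_C([a,b])=[\widehat a,1]$. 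Since $\Delta<b$ forces $0<\widehat a<1$, these two image intervals are disjoint, which proves the mapping claim. The formula for $T_C^{-1}$ then follows by solving $w=\frac{z+\Delta-b}{-z+\Delta+b}$ for $z$, which gives $z=\frac{(b+\Delta)w+(b-\Delta)}{1+w}$, matching the stated expression.

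For the inequality $\widehat a^{-1}\le\frac{4b}{a}$ I would rationalize: $\widehat a^{-1}=\frac{b+\Delta}{b-\Delta}=\frac{(b+\Delta)^2}{b^2-\Delta^2}=\frac{(b+\Delta)^2}{ab}$, again using $\Delta^2=b^2-ab$. The desired bound is therefore equivalent to $(b+\Delta)^2\le 4b^2$, i.e.\ to $b+\Delta\le 2b$, i.e.\ to $\Delta\le b$, which holds because $\Delta=\sqrt{b(b-a)}<b$. Finally, since $\rho_{[\alpha,\beta]}=\exp\!\big(-\pi^2/\log(4\beta/\alpha)\big)$ is a strictly increasing function of the ratio $\beta/\alpha$ (for $\beta/\alpha\ge 1$, where the logarithm is positive), the bound $1/\widehat a\le 4b/a$ --- equivalently $4/\widehat a\le 16b/a$ --- immediately gives $\rho_{[\widehat a,1]}\le\rho_{[a,4b]}$.

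I do not expect a serious obstacle: everything reduces to the single algebraic identity $\Delta^2=b^2-ab$ together with the monotonicity of a real M\"obius map. The only points that require a little care are confirming that the real pole $z=\Delta+b$ of $T_C$ lies strictly to the right of $b$ (so that $T_C$ is monotone on the whole of $(-\infty,b]$ and the endpoint values determine the images of the two intervals), and keeping the signs straight so that $0\mapsto-\widehat a$ and $a\mapsto\widehat a$ rather than the reverse. The verification $T_C(a)=\widehat a$ is the one computation that is not immediate and is most cleanly done via cross-multiplication.
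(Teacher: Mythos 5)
Your proof is correct and follows essentially the same direct-computation route as the paper: evaluate $T_C$ at $-\infty$, $0$, $a$, $b$ and verify $\widehat a^{-1}\leq 4b/a$ algebraically. The only differences are cosmetic --- you make the monotonicity of $T_C$ on $(-\infty,b]$ explicit (the paper leaves it implicit after checking $T_C(0)+T_C(a)=0$), and you obtain the inequality exactly via $\widehat a^{-1}=(b+\Delta)^2/(ab)$ and $(b+\Delta)^2\leq 4b^2$ rather than through the paper's estimate $\sqrt{1-t}\leq 1-\tfrac{t}{2}$; both are valid.
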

\begin{proof}
	By direct substitution, we have $T_C(-\infty) = -1$, and 
	$T_C(b) = 1$; moreover, again by direct computation one verifies that 
	$T_C(0) + T_C(a) = 0$, which implies that $T_C([-\infty, 0]) = [-1, -\widehat a]$ and $T_C([a, b]) = [\widehat a, 1]$. Then, we have 
	\[
	\widehat a^{-1} = \frac{\Delta + b}{b - \Delta}, \qquad 
	\Delta = b \sqrt{1 - a/b}.
	\]
	Using the relation $\sqrt{1 - t} \leq 1 - \frac{t}{2}$ for any $0 \leq t \leq 1$, we obtain that 
	$
	\widehat a^{-1} \leq \frac{2b - \frac{a}{2}}{\frac{a}{2}} \leq 4\frac{b}{a}, 
	$
	which concludes the proof. 
\end{proof}

\begin{remark} \label{rem:estimate-cauchy-exponent}
	We note that the estimate $\rho_{[\widehat a, 1]} \leq \rho_{[a,4b]}$ 
	is asymptotically tight, that is the limit of 
	$\rho_{[\widehat a, 1]} / \rho_{[a,4b]} \to 1$ as $b/a \to \infty$. 
	For instance, if $b/a = 10$ then the relative error between the two
	quantities is about $2\cdot 10^{-2}$, and for $b/a = 1000$ 
	about $5\cdot 10^{-5}$. Since the interest for this approach is 
	in dealing with matrices that are not well-conditioned, we consider the
	error negligible in practice. 
\end{remark}

In light of Theorem~\ref{thm:zolotarev} and Lemma~\ref{lem:mobius1d}, we consider  the choice 
\begin{equation} \label{eq:cauchy1d-poles}
\Pole_{C,\ell}^{[a,b]} := T_C^{-1}(\Pole_\ell^{[\widehat a,1]})
\end{equation} in  Theorem~\ref{thm:lapl-zol}.
This yields the following estimate.
\begin{corollary}\label{cor:cauchy}
	Let $A$ be Hermitian positive definite with spectrum contained in $[a,b]$ and $U$ be an orthonormal basis of $\mathcal{U_R}=\mathcal{RK}_\ell(A,v,\Pole_{C,\ell}^{[a,b]})$. Then, $\forall t\in[0,\infty)$
	\begin{equation}
	\norm{(tI+A)^{-1}v-U(tI+A_\ell)^{-1}v_\ell}_2\leq \frac{8}{t+a}  \norm{v}_2 \rho_{[a,4b]}^\ell,
	\end{equation}
	where  $A_\ell=U^*AU$ and $v_\ell=U^*v$.
\end{corollary}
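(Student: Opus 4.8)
The plan is to feed the pole set $\Pole_{C,\ell}^{[a,b]}$ into the resolvent bound of Theorem~\ref{thm:lapl-zol} and then estimate the resulting rational-approximation quantity by transporting the Zolotarev bound of Theorem~\ref{thm:zolotarev} through the M\"obius map $T_C$ of Lemma~\ref{lem:mobius1d}; the invariance and monotonicity properties of $\theta_\ell$ collected in Lemma~\ref{lem:tp} are the glue.

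First, I would apply Theorem~\ref{thm:lapl-zol} with $\Pole=\Pole_{C,\ell}^{[a,b]}$. By Definition~\ref{def:theta}, the right-hand side of \eqref{eq:sim-ris} is exactly $\frac{2}{t+a}\norm{v}_2\,\theta_\ell([a,b],(-\infty,0],\Pole_{C,\ell}^{[a,b]})$, so it suffices to prove $\theta_\ell([a,b],(-\infty,0],\Pole_{C,\ell}^{[a,b]})\le 4\rho_{[a,4b]}^\ell$. Since $(-\infty,0]\subseteq[-\infty,0]$, the monotonicity property Lemma~\ref{lem:tp}-\ref{lem:tp:inclusion} gives
\[
	\theta_\ell([a,b],(-\infty,0],\Pole_{C,\ell}^{[a,b]}) \le \theta_\ell([a,b],[-\infty,0],\Pole_{C,\ell}^{[a,b]}),
\]
and this enlargement is precisely what turns the unbounded second set into a compact one on which $T_C$ acts. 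Applying the M\"obius-invariance property Lemma~\ref{lem:tp}-\ref{lem:tp:mobius} with $T_C$, and using $T_C([a,b])=[\widehat a,1]$, $T_C([-\infty,0])=[-1,-\widehat a]=-[\widehat a,1]$, together with $T_C(\Pole_{C,\ell}^{[a,b]})=\Pole_\ell^{[\widehat a,1]}$ (which is the definition \eqref{eq:cauchy1d-poles}), I obtain
\[
	\theta_\ell([a,b],[-\infty,0],\Pole_{C,\ell}^{[a,b]}) = \theta_\ell([\widehat a,1],-[\widehat a,1],\Pole_\ell^{[\widehat a,1]}).
\]
By Theorem~\ref{thm:zolotarev}, $\Pole_\ell^{[\widehat a,1]}$ is the pole set of the rational function attaining $\min_{|\Pole|=\ell}\theta_\ell([\widehat a,1],-[\widehat a,1],\Pole)$, so the last quantity equals this minimum and is bounded by $4\rho_{[\widehat a,1]}^\ell$. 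Finally, Lemma~\ref{lem:mobius1d} gives $\rho_{[\widehat a,1]}\le\rho_{[a,4b]}$; chaining the three inequalities and multiplying by $\frac{2}{t+a}\norm{v}_2$ produces the factor $2\cdot 4=8$ and the claimed bound, uniformly in $t\in[0,\infty)$.

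I expect no genuine analytic obstacle: the corollary is an assembly of Theorem~\ref{thm:lapl-zol}, Lemma~\ref{lem:tp}, Theorem~\ref{thm:zolotarev} and Lemma~\ref{lem:mobius1d}. The only points requiring care are the bookkeeping ones already dispatched in the proof of Lemma~\ref{lem:tp}, namely that one must first complete $(-\infty,0]$ with the point at infinity before applying $T_C$ (handled by the monotonicity step above), and that pre-composing the optimal Zolotarev function on $[\widehat a,1]$ with $T_C^{-1}$ leaves a rational function of degree at most $\ell$ whose poles are exactly $\Pole_{C,\ell}^{[a,b]}$ — this is the generic M\"obius computation performed in the proof of Lemma~\ref{lem:tp}-\ref{lem:tp:mobius}.
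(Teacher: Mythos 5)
Your proof is correct and is essentially the argument the paper intends: the paper introduces the poles $\Pole_{C,\ell}^{[a,b]} = T_C^{-1}(\Pole_\ell^{[\widehat a,1]})$ precisely so that Theorem~\ref{thm:lapl-zol} combined with Lemma~\ref{lem:tp} (monotonicity and M\"obius invariance of $\theta_\ell$), Theorem~\ref{thm:zolotarev}, and the estimate $\rho_{[\widehat a,1]}\le\rho_{[a,4b]}$ from Lemma~\ref{lem:mobius1d} yield the stated bound with constant $2\cdot 4 = 8$. Your bookkeeping of the sets under $T_C$ and of the pole images matches the paper's construction exactly.
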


When considering Laplace-Stieltjes functions, we may choose as 
poles $\Pole_\ell^{[a,b]}$ which are 
the optimal Zolotarev poles on the interval $[a, b]$. This 
enables to prove the following result, which builds on
Theorem~\ref{thm:lapl-exp-zol}. 

\begin{corollary} \label{cor:lapl}
	Let $A$ be Hermitian positive definite with spectrum contained in $[a,b]$ and $U$ be an orthonormal basis of  $\ \mathcal{U_R}=\mathcal{RK}_\ell(A,v,\Pole_{\ell}^{[a,b]})$.
	Then, $\forall t\in[0,\infty)$
	\begin{equation}\label{eq:13}
	\norm{e^{-tA}v-Ue^{-tA_\ell}v_\ell}_2\leq 8 \gamma_{\ell,\kappa}
	\norm{v}_2 \rho_{[a,b]}^{\frac \ell2},
	\end{equation}
	where  $A_\ell=U^*AU$  and $v_\ell=U^*v$.
\end{corollary}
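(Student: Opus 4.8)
The plan is to obtain the statement as a direct consequence of Theorem~\ref{thm:lapl-exp-zol}, specialized to the Zolotarev poles $\Pole_\ell^{[a,b]}$, together with the explicit estimate of Theorem~\ref{thm:zolotarev}. First I would check that $\Pole = \Pole_\ell^{[a,b]}$ is an admissible choice for Theorem~\ref{thm:lapl-exp-zol}: by Theorem~\ref{thm:zolotarev} the Zolotarev poles satisfy $\pole_{j,\ell}^{[a,b]} \in -I = [-b,-a]$, so the hypothesis $\Pole \subseteq [-b,-a]$ holds. Hence Theorem~\ref{thm:lapl-exp-zol} gives
\[
\norm{e^{-tA}v - Ue^{-tA_\ell}v_\ell}_2 \leq 4\gamma_{\ell,\kappa}\norm{v}_2 \max_{z\in[a,b]}|r_\Pole(z)|, \qquad r_\Pole(z) = \prod_{j=1}^\ell \frac{z + \pole_{j,\ell}^{[a,b]}}{z - \pole_{j,\ell}^{[a,b]}},
\]
valid for all $t \in [0,\infty)$.

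Next I would identify $r_\Pole$ with the optimal Zolotarev function $r_\ell^{[a,b]}$. Writing $p_\ell^{[a,b]}(z) = \prod_{j=1}^\ell(z + \pole_{j,\ell}^{[a,b]})$, one checks directly that $r_\ell^{[a,b]}(z) = p_\ell^{[a,b]}(z)/p_\ell^{[a,b]}(-z) = (-1)^\ell r_\Pole(z)$, so $|r_\Pole(z)| = |r_\ell^{[a,b]}(z)|$ on the real axis. The structural fact I would use is the reflection symmetry $r_\ell^{[a,b]}(-z) = 1/r_\ell^{[a,b]}(z)$, which is immediate from this product form. Consequently $\min_{z\in[-b,-a]}|r_\ell^{[a,b]}(z)| = \min_{w\in[a,b]}|r_\ell^{[a,b]}(-w)| = 1/\max_{w\in[a,b]}|r_\ell^{[a,b]}(w)|$, so that the ratio appearing in Definition~\ref{def:theta} collapses to
\[
\theta_\ell([a,b], -[a,b], \Pole_\ell^{[a,b]}) = \frac{\max_{z\in[a,b]}|r_\ell^{[a,b]}(z)|}{\min_{z\in[-b,-a]}|r_\ell^{[a,b]}(z)|} = \Bigl(\max_{z\in[a,b]}|r_\ell^{[a,b]}(z)|\Bigr)^2 .
\]

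Finally, Theorem~\ref{thm:zolotarev} bounds the left-hand side by $4\rho_{[a,b]}^\ell$, hence $\max_{z\in[a,b]}|r_\ell^{[a,b]}(z)| \leq 2\rho_{[a,b]}^{\ell/2}$. Substituting this into the inequality from Theorem~\ref{thm:lapl-exp-zol} and using $|r_\Pole| = |r_\ell^{[a,b]}|$ on $[a,b]$ gives
\[
\norm{e^{-tA}v - Ue^{-tA_\ell}v_\ell}_2 \leq 4\gamma_{\ell,\kappa}\norm{v}_2 \cdot 2\rho_{[a,b]}^{\ell/2} = 8\gamma_{\ell,\kappa}\norm{v}_2 \rho_{[a,b]}^{\ell/2},
\]
which is exactly \eqref{eq:13}. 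The only step that is not pure substitution is the recognition that the rational function produced by Theorem~\ref{thm:lapl-exp-zol} with these poles coincides (up to sign) with the Zolotarev extremal function and enjoys the reflection symmetry $r_\ell^{[a,b]}(-z) = 1/r_\ell^{[a,b]}(z)$; this is what lets one convert the one-sided Zolotarev ratio into the square of $\max_{[a,b]}|r_\ell^{[a,b]}|$ and thereby halve the exponent. I do not expect any genuine obstacle: this is a corollary in the literal sense.
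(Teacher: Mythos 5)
Your proof is correct and follows essentially the same route as the paper: verify that the Zolotarev poles lie in $[-b,-a]$, apply Theorem~\ref{thm:lapl-exp-zol} with those poles, and use the bound $\max_{z\in[a,b]}|r_\ell^{[a,b]}(z)|\leq 2\rho_{[a,b]}^{\ell/2}$. The only (harmless) difference is that the paper cites this last bound from the literature, whereas you derive it self-containedly from Theorem~\ref{thm:zolotarev} via the reflection symmetry $r_\ell^{[a,b]}(-z)=1/r_\ell^{[a,b]}(z)$, which collapses the Zolotarev ratio into $\bigl(\max_{[a,b]}|r_\ell^{[a,b]}|\bigr)^2$.
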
 
\begin{proof}
	The proof relies on the fact that the optimal 
	Zolotarev function evaluated on the interval $[a, b]$ can be
	bounded by $2 \rho_{[a,b]}^{\frac \ell2}$ \cite[Theorem 3.3]{Beckermann2019}. Since
	its zeros and poles are symmetric with respect to the
	imaginary axis and real, we can apply Theorem~\ref{thm:lapl-exp-zol} to obtain \eqref{eq:13}. 
\end{proof}
\subsection{Convergence bounds for Stieltjes functions}\label{sec:conv1d}Let us consider $f(z)$ a Stieltjes function of the general form \eqref{eq:general-stielt}. Then the error of the rational Krylov method for approximating $f(A)v$ is given by
\begin{align*}
\norm{f(A)v - Uf(A_\ell)v_\ell}_2 &= \norml{\int_0^\infty 
	\left[ g(t,A)v - Ug(t,A_{\ell}) v_\ell \right]\mu(t)  \ dt}_2\\
&\leq\int_0^\infty 
\norml{ g(t,A)v - Ug(t,A_{\ell}) v_\ell }_2\mu(t)  \ dt
\end{align*}
where $g(t,A)$ is either a parameter dependent 
exponential or resolvent function. Therefore 
Corollary~\ref{cor:cauchy} and Corollary~\ref{cor:lapl} provide all the ingredients to study the error of the rational Krylov projection, when the suggested pole selection strategy is adopted.
\begin{corollary}\label{cor:lapl-stielt}
	Let $f(z)$ be a Laplace-Stieltjes function, $A$ be Hermitian positive definite with spectrum contained in $[a,b]$, $U$ be an orthonormal basis of $\mathcal{U_R}=\mathcal{RK}_\ell(A,v,\Pole_{\ell}^{[a,b]})$ and $x_\ell=Uf(A_\ell)v_\ell$ with $A_\ell=U^*AU$ and $v_\ell=U^*v$. Then
	\begin{equation}
	\norm{f(A)v-x_\ell}_2\leq 8 \gamma_{\ell,\kappa} f(0^+)  \norm{v}_2 \rho_{[a,b]}^{\frac \ell2},
	\end{equation}
	where $\gamma_{\ell,\kappa}$ is defined as in Theorem~\ref{thm:lapl-exp-zol}, and $f(0^+) := \lim_{z \to 0^+} f(z)$. 
\end{corollary}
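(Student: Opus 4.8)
The plan is to reduce the statement to the already-established uniform-in-$t$ bound of Corollary~\ref{cor:lapl} by exploiting the integral representation of Laplace-Stieltjes functions and the linearity of the Galerkin projection. First I would write, using Definition~\ref{def:laplace-stieltjes}, $f(A)v = \int_0^\infty e^{-tA}v\,\mu(t)\,dt$, and observe that the map $y \mapsto U g(A_\ell)(U^*y)$ is linear and bounded uniformly in the integration variable, so it may be moved inside the integral; hence $x_\ell = U f(A_\ell) v_\ell = \int_0^\infty U e^{-tA_\ell} v_\ell\,\mu(t)\,dt$. Subtracting the two representations and applying the triangle inequality for the (Bochner) integral gives
\[
\norm{f(A)v - x_\ell}_2 \le \int_0^\infty \norm{e^{-tA}v - U e^{-tA_\ell} v_\ell}_2\,\mu(t)\,dt .
\]

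Next I would invoke Corollary~\ref{cor:lapl}: since the chosen poles $\Pole_\ell^{[a,b]}$ are real and symmetric with respect to the imaginary axis, that corollary applies and yields $\norm{e^{-tA}v - U e^{-tA_\ell} v_\ell}_2 \le 8\,\gamma_{\ell,\kappa}\,\norm{v}_2\,\rho_{[a,b]}^{\ell/2}$. The key point is that this bound is \emph{independent of} $t$ — precisely the reason the Zolotarev poles on $[a,b]$ serve all the exponentials appearing in the Stieltjes integral at once. Pulling this constant out of the integral leaves the factor $\int_0^\infty \mu(t)\,dt$.

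Finally I would identify $\int_0^\infty \mu(t)\,dt$ with $f(0^+)$. Because $\mu(t)\ge 0$, the monotone convergence theorem gives $f(0^+) = \lim_{z\to 0^+}\int_0^\infty e^{-tz}\mu(t)\,dt = \int_0^\infty \mu(t)\,dt$; if this quantity is $+\infty$ the asserted inequality holds trivially, and if it is finite then $\mu$ is integrable and every interchange used above is legitimate. Combining the three steps produces the stated bound $\norm{f(A)v - x_\ell}_2 \le 8\,\gamma_{\ell,\kappa}\,f(0^+)\,\norm{v}_2\,\rho_{[a,b]}^{\ell/2}$.

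I do not expect a genuine obstacle here, since the analytic heart of the argument is already contained in Corollary~\ref{cor:lapl} and Theorem~\ref{thm:lapl-exp-zol}. The only points needing a little care are the justification of moving the projection and the norm inside the integral (which rests on $\mu\ge 0$ together with the uniform bound of Corollary~\ref{cor:lapl} guaranteeing integrability of the integrand) and the clean treatment of the degenerate case $f(0^+)=+\infty$.
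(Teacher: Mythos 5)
Your proposal is correct and follows essentially the same route as the paper: write the error as an integral of the parameter-dependent exponential errors, apply the uniform-in-$t$ bound of Corollary~\ref{cor:lapl}, and identify $\int_0^\infty \mu(t)\,dt$ with $f(0^+)$. The extra care you take in justifying the interchange of integral and projection and in handling $f(0^+)=+\infty$ is sound but goes beyond what the paper's proof records explicitly.
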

\begin{proof}
	Since $f(z)$ is a Laplace-Stieltjes function, we can express the error as follows:
	\begin{align*}
	\norm{f(A)v - x_\ell}_2 &\leq \int_0^\infty 
	\norml{ e^{-tA}v - Ue^{-tA_{\ell}} U^* v }_2\mu(t)  \ dt\\
	&\leq 8\gamma_{\ell,\kappa}\int_0^\infty 
	\mu(t)   \ dt \norm{v}_2\rho_{[a,b]}^{\frac \ell 2}\\ &=8 \gamma_{\ell,\kappa}f(0^+)  \norm{v}_2 \rho_{[a,b]}^{\frac \ell2},
	\end{align*}
	where we applied \eqref{eq:galerkin-optim} and Corollary~\ref{cor:lapl}
	to obtain the second inequality.
\end{proof}

\begin{remark} \label{rem:shift-bound}
	In order to be meaningful, 
	Corollary~\ref{cor:lapl-stielt} requires the function $f(z)$ to be
	finite over $[0, \infty)$, which might not be the case in general (consider 
	for instance $x^{-\alpha}$, which is both Cauchy and Laplace-Stieltjes). 
	Nevertheless, the result can be applied to $f(z + \eta)$, which
	is always completely monotonic for a positive $\eta$,
	by taking $0 < \eta < a$. A value of $\eta$ 
	closer to $a$ gives a slower decay rate, but a smaller constant $f(0^+)$. Similarly,
	if $f(z)$ happens to be completely monotonic on an 
	interval larger than $[0, \infty)$, then bounds with a faster asymptotic
	convergence rate (but a larger constant)
	can be obtained considering $\eta < 0$. 
\end{remark}

Corollary~\ref{cor:cauchy} allows to state the corresponding bound for Cauchy-Stieltjes functions.
The proof is analogous to the one of Corollary~\ref{cor:lapl-stielt}. 

\begin{corollary} \label{cor:cauchy-stielt}
	Let $f(z)$ be a Cauchy-Stieltjes function, $A$ be Hermitian positive definite with spectrum contained in $[a,b]$, $U$ be an orthonormal basis of $\mathcal{U_R}=\mathcal{RK}_\ell(A,v,\Pole_{C,\ell}^{[a,b]})$
	with $\Pole_{C,\ell}^{[a,b]}$ as in \eqref{eq:cauchy1d-poles}
	and $x_\ell=Uf(A_\ell)v_\ell$ with $A_\ell=U^*AU$ and $v_\ell=U^*v$. Then
	\begin{equation}
	\norm{f(A)v-x_G}_2\leq 8f(a)  \norm{v}_2 \rho_{[a,4b]}^\ell.
	\end{equation}
\end{corollary}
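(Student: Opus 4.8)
The plan is to follow the proof of Corollary~\ref{cor:lapl-stielt} verbatim, replacing the exponential kernel by the resolvent kernel and Corollary~\ref{cor:lapl} by Corollary~\ref{cor:cauchy}. First I would invoke the integral representation \eqref{eq:cauchy-func} of the Cauchy-Stieltjes function together with the linearity of the Galerkin projection $x_\ell = U f(A_\ell) v_\ell$ to write the error as
\[
f(A)v - x_\ell = \int_0^\infty \left[ (tI+A)^{-1} v - U(tI+A_\ell)^{-1} v_\ell \right] \mu(t)\ dt,
\]
the interchange of $f$ with the integral on the projected side being justified exactly as at the end of Section~\ref{sec:stieltjes}, since the projection is linear and $A_\ell = U^*AU$ is again Hermitian positive definite with spectrum in $[a,b]$.

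Next I would take Euclidean norms and move the norm inside the integral by the (integral form of the) triangle inequality, obtaining
\[
\norm{f(A)v - x_\ell}_2 \le \int_0^\infty \norm{(tI+A)^{-1} v - U(tI+A_\ell)^{-1} v_\ell}_2\, \mu(t)\ dt .
\]
Because the poles are chosen as $\Pole_{C,\ell}^{[a,b]}$ from \eqref{eq:cauchy1d-poles}, Corollary~\ref{cor:cauchy} applies for every $t \in [0,\infty)$ and bounds the integrand by $\tfrac{8}{t+a}\norm{v}_2 \rho_{[a,4b]}^\ell$. Substituting this estimate and pulling out the factors independent of $t$ leaves
\[
\norm{f(A)v - x_\ell}_2 \le 8 \norm{v}_2\, \rho_{[a,4b]}^\ell \int_0^\infty \frac{\mu(t)}{t+a}\ dt = 8 f(a)\, \norm{v}_2\, \rho_{[a,4b]}^\ell ,
\]
where the final identity is just \eqref{eq:cauchy-func} evaluated at $z = a$.

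There is no genuinely hard step here; the proof is a one-line reduction to Corollary~\ref{cor:cauchy}. The only points deserving a word of care are (i) the finiteness of $f(a)=\int_0^\infty \mu(t)/(t+a)\,dt$, which holds because $a>0$ and $f$ is analytic on $\mathbb C\setminus\mathbb R_-$, so that the right-hand side is a bona fide bound; and (ii) the rigorous justification of exchanging the matrix function with the Stieltjes integral, which one gets by diagonalizing $A$ and $A_\ell$, applying the scalar statement componentwise, and invoking dominated convergence with the $t$-uniform majorant $\tfrac{1}{t+a}$ furnished by Corollary~\ref{cor:cauchy}. As in Remark~\ref{rem:shift-bound}, if $f$ itself is singular at the origin (e.g. $f(z)=z^{-\alpha}$) one may instead apply the result to $f(z+\eta)$ with $0<\eta<a$; note, however, that the constant $f(a)$ appearing above is already finite, so here no shift is actually required.
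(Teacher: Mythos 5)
Your proposal is correct and is exactly the argument the paper intends: the paper omits the proof, stating only that it is ``analogous to the one of Corollary~\ref{cor:lapl-stielt}'', and your reduction --- integral representation, triangle inequality under the integral, pointwise bound from Corollary~\ref{cor:cauchy}, and the identity $\int_0^\infty \mu(t)/(t+a)\,dt = f(a)$ --- is precisely that analogous argument. Your additional remarks on finiteness of $f(a)$ and on interchanging the matrix function with the Stieltjes integral are sound but not needed beyond what the paper already assumes.
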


\subsection{Nested sequences of poles} \label{sec:nested}

From the computational perspective, 
it is more convenient to have a nested sequence of subspaces 
$\mathcal W_{1} \subseteq \ldots 
\mathcal{W}_j \subseteq \mathcal W_{j+1} \subseteq \ldots$, so
that $\ell$ can be chosen adaptively. For example, in \cite{guettel2013blackbox} the authors propose a  greedy
algorithm for the selection of the poles taylored to the evaluation of Cauchy-Stieltjes matrix functions. See \cite{druskin2010adaptive,druskin2011adaptive} for greedy pole selection strategies to be applied in different --- although closely related --- contexts.

The choices of poles proposed in the previous sections require to 
a priori determine the degree $\ell$ of the approximant $x_\ell$. Given a target accuracy,  one can use the convergence bounds in Corollary~\ref{cor:lapl-stielt}--\ref{cor:cauchy-stielt} to determine $\ell$. Unfortunately, this is likely to overestimate the minimum value of $\ell$ that provides the sought accuracy.

An option, that allows to overcome this limitation, is to rely on the method of \emph{equidistributed sequences} (EDS), as described in \cite[Section 4]{druskin2009}. The latter exploits the limit --- as $\ell\to\infty$ --- of the measures generated by the set of points $\Psi_\ell^{[a,b]},\Psi_{C,\ell}^{[a,b]}$  to return infinite sequences of poles that are guaranteed to provide the same asymptotic rate of convergence. More specifically, the EDS $\{\widetilde \sigma_j\}_{j\in\mathbb N}$ associated with $\Psi_{\ell}^{[a,1]}$ is obtained with the following steps:
\begin{enumerate}[label=(\roman*)]
	\item Select $\zeta\in\mathbb R^+\setminus\mathbb Q$ and generate the sequence $\{s_j\}_{j\in\mathbb N}:=\{0,\ \zeta -\lfloor\zeta\rfloor,\ 2\zeta -\lfloor2\zeta\rfloor,\ 3\zeta -\lfloor3\zeta\rfloor,\ \dots\}$, where $\lfloor\cdot\rfloor$ indicates the greatest integer less than or equal to the argument; 
	this sequence has as asymptotic distribution (in the sense of EDS) 
	the Lebesgue measure
	on $[0, 1]$. 
	\item Compute the sequence $\{t_j\}_{j\in\mathbb N}$ such that $g(t_j)=s_j$ where
	$$
	g(t):=\frac{1}{2M}\int_{a^2}^t\frac{dy}{\sqrt{(y-a^2)y(1-y)}},\qquad M:=\int_0^1\frac{dy}{\sqrt{(1-y^2)(1-(1-a^2)y^2)}},
	$$
	\item Define $\widetilde \sigma_j:=\sqrt{t_j}$.
\end{enumerate}
More generally, the EDS associated with $\Psi_\ell^{[a,b]},\Psi_{C,\ell}^{[a,b]}$ are obtained by applying either a scaling or the M\"obius transformation \eqref{eq:cauchy1d-poles} to the EDS for $\Psi_{\ell}^{[a,1]}$.

In our implementation, only the finite portion $\{\widetilde \sigma_j\}_{j=0,\dots, \ell-1}$ is --- incrementally --- generated for computing $x_\ell$. As starting irrational number we select $\zeta=\frac{1}{\sqrt 2}$ and each quantity $t_j$ is approximated by applying the Newton's method to the equation $g(t_j)-s_j=0$. The initialization of
the Newton iteration is done by approximating $\hat t \mapsto g(e^{\hat t}) - s_j$ 
with a linear function on the domain of interest, and then using the
exponential of its only root as starting point. This is done beforehand selecting $t = a^2$ and $t = a$ 
as interpolation points; in our experience, with such starting point Newton's
method converges in a few steps. 
\subsection{Some numerical tests}
\subsubsection{Laplace-Stieltjes functions}\label{ex:lapl1d}
Let us consider the 1D diffusion problem over $[0, 1]$ with
zero Dirichlet boundary conditions
\[
\frac{\partial u}{\partial t} = \epsilon \frac{\partial^2 u}{\partial x^2} + f(x), 
\qquad u(x, 0) \equiv 0, \qquad \epsilon = 10^{-2},
\]
discretized using central finite differences in space with $50000$ points, 
and integrated 
by means of the exponential Euler method with time step $\Delta t = 0.1$. This requires
to evaluate the action of the Laplace-Stieltjes matrix function $\varphi_1(\frac{\epsilon}{h^2} \Delta t A) v$, where $A$ is the
tridiagonal matrix $A = \mathrm{tridiag}(-1, 2, -1)$. 
We test the convergence rates of various choices of poles by measuring the absolute error when using a random vector $v$. Figure~\ref{fig:exp-lapl-1d}-(left) reports the results associated with:
the poles from Corollary~\ref{cor:lapl}, the corresponding EDS computed as described in Section~\ref{sec:nested} and  the extended Krylov method. 
It is visible that the three approximations have the same convergence rate, although the choice of poles from Corollary~\ref{cor:lapl} and the EDS performs slightly better. We mention that, since $A$ is ill-conditioned, polynomial Krylov performs 
poorly on this example. 

We keep the same settings and we test the convergence rates for the Laplace-Stieltjes function $z^{-\frac 32}W(z)$ where $W(z)$ is the Lambert W function \cite{kalugin2012stieltjes}. The plot in Figure~\ref{fig:exp-lapl-1d}-(right) shows that after about $10$ iterations the convergence rate of the extended Krylov method deteriorates, while the poles from Corollary~\ref{cor:lapl} and the EDS provide the best convergence rate.
\begin{figure}\centering
	\begin{tikzpicture}
	\begin{semilogyaxis}[
	title = {$f(z) = \varphi_1(z)$},
	ytick = {1e-11, 1e-8, 1e-5, 1e-2,1e1},
	legend style = {at={(1.05,1)}, anchor = north west}, 
	xlabel = Iterations ($\ell$), ymin = 1e-12, ymax = 1e2,
	ylabel = {$\norm{x  -x_{\ell}}_2$}, width = .45\linewidth, 
	height = .25\textheight]			
	\addplot[blue, mark=*] table[x index = 0, y index = 1] {1d-50000-phi.dat};
	\addplot[red, mark=square*] table[x index = 0, y index = 2] {1d-50000-phi.dat};
	\addplot[green, mark=diamond*] table[x index = 0, y index = 3] {1d-50000-phi.dat};				
	\addplot[red, dashed] table[x index = 0, y index = 4] {1d-50000-phi.dat};
	\end{semilogyaxis}
	\end{tikzpicture}~\begin{tikzpicture}
	\begin{semilogyaxis}[
	title = {$f(z) = z^{-\frac 32} W(z)$},
	legend style = {at={(1.05,1)}, anchor = north west}, 
	xlabel = Iterations ($\ell$), 
	ytick = \empty,ymin = 1e-12, ymax = 1e2,
	width = .45\linewidth, 
	height = .25\textheight]			
	\addplot[blue, mark=*] table[x index = 0, y index = 1] {1d-50000-lwz.dat};
	\addplot[red, mark=square*] table[x index = 0, y index = 2] {1d-50000-lwz.dat};
	\addplot[green, mark=diamond*] table[x index = 0, y index = 3] {1d-50000-lwz.dat};	
	\addplot[domain=1:50,red, dashed]{14.16 * 8*(2.23 + 2/3.14 * ln(4*x*sqrt(1e9/3.14))) * sqrt(exp(-3.14^2/(ln(4*1e9))))^x};		
	\legend{EK, Cor.~\ref{cor:lapl}, EDS, Bound};
	\end{semilogyaxis}
	\end{tikzpicture}
	\caption{Convergence history of the different projection spaces for the evaluation of 
		$\varphi_1(A)v$ and $A^{-\frac 32}W(A)v$ for a matrix argument of size $50000 \times 50000$. 
		The methods tested are extended Krylov (EK), rational Krylov with the poles from 
		Corollary~\ref{cor:lapl} and rational Krylov with nested poles obtained as in Section~\ref{sec:nested}. The bound in the left figure is obtained directly from Corollary~\ref{cor:lapl}. The bound in the right figure has been obtained as in Remark~\ref{rem:shift-bound}. 
	}
	\label{fig:exp-lapl-1d}
\end{figure}
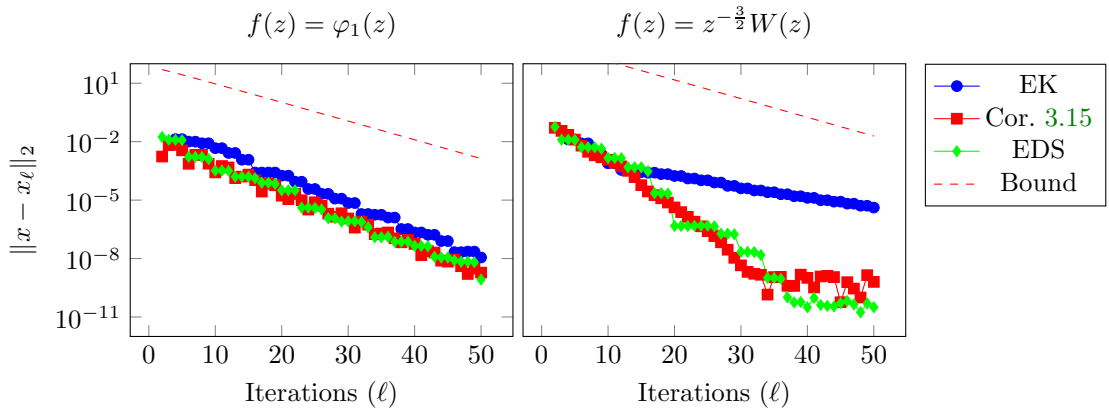
\subsubsection{Inverse square root}\label{ex:cauchy1d}
Let us test the pole selection strategies for Cauchy-Stieltjes functions, by  considering
the evaluation of $f(z) = z^{-\frac 12}$ on the $n\times n$ matrix $\mathrm{tridiag}(-1, 2, -1)$, for $n=10^4,5\cdot 10^4, 10^5$. The list of methods that we consider includes:  the poles $\Pole_{C,\ell}^{[a,b]}$ from Corollary~\ref{cor:cauchy}, the associated EDS, the extended Krylov method and the adaptive strategy
proposed in \cite{guettel2013blackbox} for Cauchy-Stieltjes functions. The latter is implemented in the
\texttt{markovfunmv} package available at \url{http://guettel.com/markovfunmv/} which we used for producing  the  results 
reported in Figure~\ref{fig:exp-cauchy-1d}. 
The poles  from Corollary~\ref{cor:cauchy} and the extended Krylov method yield the best and the worst convergence history, respectively, for all values of $n$. The EDS and \texttt{markovfunm} perform similarly for $n=10^4$, but as $n$ increases the decay rate of \texttt{markovfunm} deteriorates significantly. 

We consider a second numerical experiment which keeps the same settings apart from the size of the matrix argument which is fixed to $n=10^5$. Then, we measure the number of iterations and the computational time needed by the methods using nested sequences of poles, i.e. EK, EDS, \texttt{markovfunm}, to reach different target values for the relative error $\frac{\norm{x-x_{\ell}}_2}{\norm{x}_2}$. The EK method has the cheapest iteration cost because it exploits the pre-computation of the Cholesky factor of the matrix $A$ for the computation of the orthogonal basis. However, as testified by the results in Table~\ref{tab:times}, the high number of iterations makes EK competitive only for the high relative error $10^{-1}$. The iteration costs of EDS and \texttt{markovfunm} is essentially the same since they only differ in the computation of the poles, which requires a negligible portion of the computational time. Therefore, the comparison between EDS and \texttt{markovfunm} goes along with the number of iterations which makes the former more efficient\footnote{To make a fair comparison between the methods, for this test we relied 
	on the rational Arnoldi implementation found in \texttt{markovfunm}
	for the implementation of Algorithm~\ref{alg:subspace} using
	EDS poles.}. We remark that in the situation where precomputing the
Cholesky gives a larger computational benefit, and memory is not an
issue, EK may be competitive
again. 

We conclude the numerical experiments on the inverse square root by considering matrix arguments with different distributions of the eigenvalues. More precisely, we set $A$ as the diagonal matrix of dimension $n=5\cdot 10^4$ with the following spectrum configurations:
\begin{itemize}
	\item[$(i)$] Equispaced values in the interval $[\frac 1n, 1]$,
	\item[$(ii)$] Eigenvalues of $\mathrm{trid}(-1,2+10^{-3},-1)$ (shifted Laplacian),
	\item[$(iii)$] $20$ Chebyshev points in $[10^{-3}, 10^{-1}]$ and $n-20$ Chebyshev points in $[10, 10^3]$.
\end{itemize}
The convergence histories of the different projection spaces are reported in Figure~\ref{fig:exp-cauchy-1d-eigenvalues}. For all the eigenvalues configurations, EDS and \texttt{markovfunm} provide comparable performances. The poles from Corollary~\ref{cor:cauchy} performs as EDS and \texttt{markovfunm} on $(ii)$ and slightly better on $(i)$ and $(iii)$. Once again, EK is the one providing the slowest convergence rate on all examples.  
\begin{figure} 
	\begin{tikzpicture}
	\begin{semilogyaxis}[
	title = {$f(z) = z^{-\frac 12}$, $n = 10000$},
	legend style = {at={(1.05,1)}, anchor = north west}, 
	xlabel = Iterations ($\ell$), 
	ylabel = {$\norm{x  -x_{\ell}}_2$}, width = .48\linewidth, 
	height = .25\textheight, ytick = {1e-11, 1e-7, 1e-3, 1e1, 1e5}]			
	\addplot[blue, mark=*] table[x index = 0, y index = 1] {1d-10000-invsqrt.dat};
	\addplot[red, mark=square*] table[x index = 0, y index = 2] {1d-10000-invsqrt.dat};
	\addplot[green, mark=diamond*] table[x index = 0, y index = 3] {1d-10000-invsqrt.dat};
	\addplot[black, mark=star] table[x index = 0, y index = 4] {1d-10000-invsqrt.dat};		
	\addplot[red, dashed] table[x index = 0, y index = 5] {1d-10000-invsqrt.dat};
	\end{semilogyaxis}
	\end{tikzpicture}~\begin{tikzpicture}
	\begin{semilogyaxis}[
	title = {$f(z) = z^{-\frac 12}$, $n = 50000$},
	legend style = {at={(1.05,1)}, anchor = north west}, 
	xlabel = Iterations ($\ell$), 
	ylabel = {$\norm{x  -x_{\ell}}_2$}, width = .48\linewidth, 
	height = .25\textheight, ytick = {1e-9, 1e-6, 1e-3, 1e0,1e3}, ymax=5e4]			
	\addplot[blue, mark=*] table[x index = 0, y index = 1] {1d-50000-invsqrt.dat};
	\addplot[red, mark=square*] table[x index = 0, y index = 2] {1d-50000-invsqrt.dat};
	\addplot[green, mark=diamond*] table[x index = 0, y index = 3] {1d-50000-invsqrt.dat};
	\addplot[black, mark=star] table[x index = 0, y index = 4] {1d-50000-invsqrt.dat};				
	\addplot[red, dashed] table[x index = 0, y index = 5] {1d-10000-invsqrt.dat};
	\end{semilogyaxis}
	\end{tikzpicture} \\
	\begin{tikzpicture}
	\begin{semilogyaxis}[
	title = {$f(z) = z^{-\frac 12}$, $n = 100000$},
	legend style = {at={(1.5,.75)}, anchor = north west}, 
	xlabel = Iterations ($\ell$), 
	ylabel = {$\norm{x  -x_{\ell}}_2$}, width = .48\linewidth, 
	height = .25\textheight, ytick = {1e-9, 1e-6, 1e-3, 1e0, 1e3}, ymax=5e4]			
	\addplot[blue, mark=*] table[x index = 0, y index = 1] {1d-100000-invsqrt.dat};
	\addplot[red, mark=square*] table[x index = 0, y index = 2] {1d-100000-invsqrt.dat};
	\addplot[green, mark=diamond*] table[x index = 0, y index = 3] {1d-100000-invsqrt.dat};
	\addplot[black, mark=star] table[x index = 0, y index = 4] {1d-100000-invsqrt.dat};				
	\addplot[red, dashed] table[x index = 0, y index = 5] {1d-10000-invsqrt.dat};
	\legend{EK, Cor.~\ref{cor:cauchy}, EDS,  \texttt{markovfunm}, Bound}
	\end{semilogyaxis}
	\end{tikzpicture}
	\caption{Convergence history of the different projection spaces for the evaluation of $A^{-\frac 12}v$, with $A=\mathrm{trid}(-1, 2, -1)$,  
		for different sizes $n$ of the matrix argument. The methods tested are extended Krylov (EK), rational Krylov with the poles from 
		Corollary \ref{cor:cauchy}, rational Krylov with nested poles obtained as in Section~\ref{sec:nested} (EDS) and rational Krylov with the poles of \texttt{markovfunm}. The bound is obtained from Corollary~\ref{cor:cauchy}.}
	\label{fig:exp-cauchy-1d}
\end{figure}
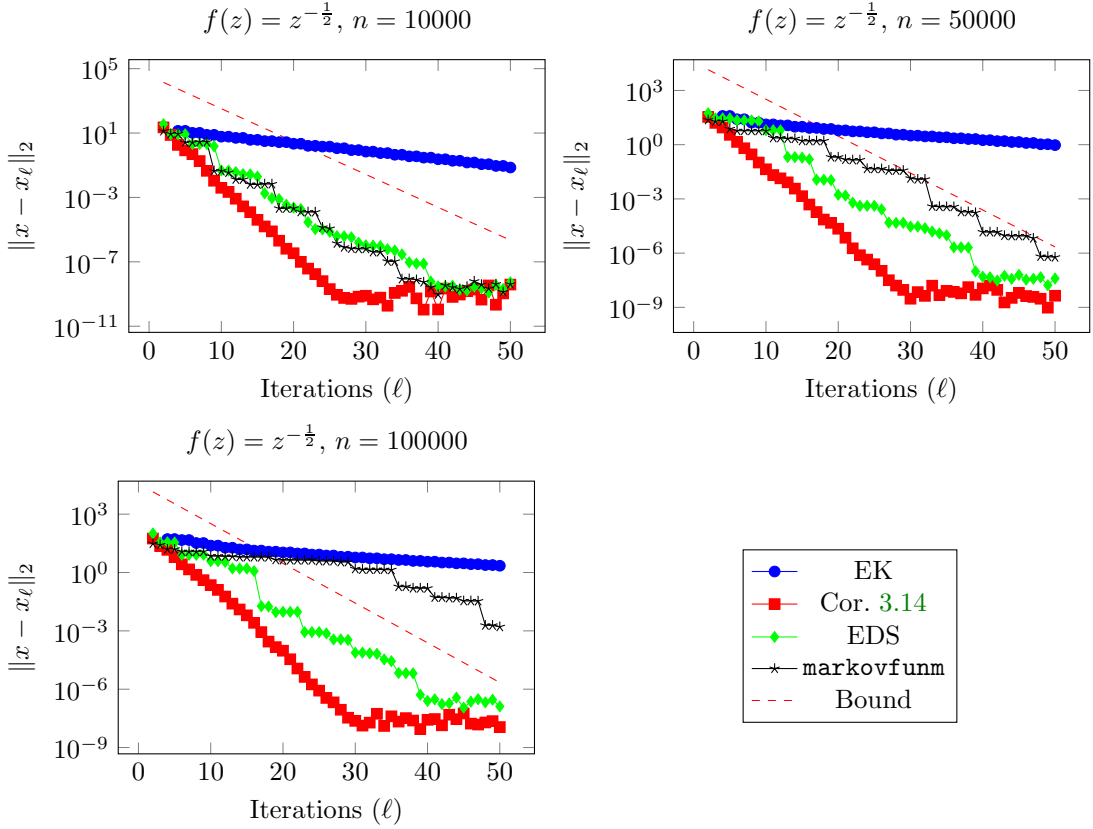

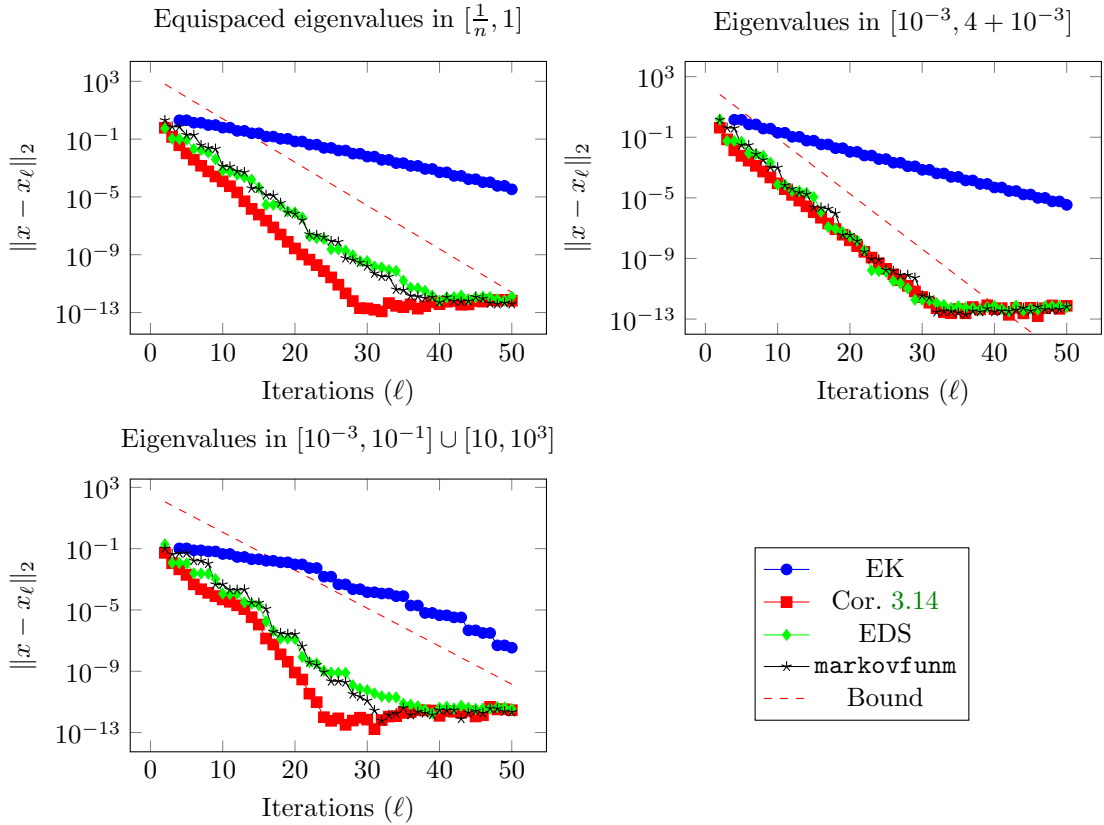
\begin{figure} 
	\begin{tikzpicture}
	\begin{semilogyaxis}[
	title = {Equispaced eigenvalues in $[\frac 1n, 1]$},
	legend style = {at={(1.05,1)}, anchor = north west}, 
	xlabel = Iterations ($\ell$), 
	ylabel = {$\norm{x  -x_{\ell}}_2$}, width = .48\linewidth, 
	height = .25\textheight, ytick = {1e-13,1e-9, 1e-5, 1e-1, 1e3}]			
	\addplot[blue, mark=*] table[x index = 0, y index = 1] {1d-50000-invsqrt-uniform.dat};
	\addplot[red, mark=square*] table[x index = 0, y index = 2] {1d-50000-invsqrt-uniform.dat};
	\addplot[green, mark=diamond*] table[x index = 0, y index = 3] {1d-50000-invsqrt-uniform.dat};
	\addplot[black, mark=star] table[x index = 0, y index = 4] {1d-50000-invsqrt-uniform.dat};				
	\addplot[red, dashed] table[x index = 0, y index = 5] {1d-50000-invsqrt-uniform.dat};
	\end{semilogyaxis}
	\end{tikzpicture}~\begin{tikzpicture}
	\begin{semilogyaxis}[
	title = {Eigenvalues in $[10^{-3}, 4 + 10^{-3}]$},
	legend style = {at={(1.05,1)}, anchor = north west}, 
	xlabel = Iterations ($\ell$), 
	ylabel = {$\norm{x  -x_{\ell}}_2$}, width = .48\linewidth, 
	height = .25\textheight,ytick = {1e-13,1e-9, 1e-5, 1e-1, 1e3}, ymin=1e-14, ymax=1e4]			
	\addplot[blue, mark=*] table[x index = 0, y index = 1] {1d-50000-invsqrt-well.dat};
	\addplot[red, mark=square*] table[x index = 0, y index = 2] {1d-50000-invsqrt-well.dat};
	\addplot[green, mark=diamond*] table[x index = 0, y index = 3] {1d-50000-invsqrt-well.dat};
	\addplot[black, mark=star] table[x index = 0, y index = 4] {1d-50000-invsqrt-well.dat};				
	\addplot[red, dashed] table[x index = 0, y index = 5] {1d-50000-invsqrt-well.dat};
	\end{semilogyaxis}
	\end{tikzpicture} \\
	\begin{tikzpicture}
	\begin{semilogyaxis}[	
	title = {Eigenvalues in $[10^{-3}, 10^{-1}] \cup [10, 10^3]$},
	legend style = {at={(1.5,.75)}, anchor = north west}, 
	xlabel = Iterations ($\ell$), 
	ylabel = {$\norm{x  -x_{\ell}}_2$}, width = .48\linewidth, 
	height = .25\textheight,,ytick = {1e-13,1e-9, 1e-5, 1e-1, 1e3}]			
	\addplot[blue, mark=*] table[x index = 0, y index = 1] {1d-50000-invsqrt-gap.dat};
	\addplot[red, mark=square*] table[x index = 0, y index = 2] {1d-50000-invsqrt-gap.dat};
	\addplot[green, mark=diamond*] table[x index = 0, y index = 3] {1d-50000-invsqrt-gap.dat};
	\addplot[black, mark=star] table[x index = 0, y index = 4] {1d-50000-invsqrt-gap.dat};				
	\addplot[red, dashed] table[x index = 0, y index = 5] {1d-50000-invsqrt-gap.dat};
	\legend{EK, Cor.~\ref{cor:cauchy}, EDS,  \texttt{markovfunm}, Bound}
	\end{semilogyaxis}
	\end{tikzpicture}
	\caption{Convergence history of the different projection spaces for the evaluation of $A^{-\frac 12}v$ for a diagonal matrix argument of size $50000 \times 50000$ with different
		eigenvalue distributions. The methods tested are extended Krylov (EK), rational Krylov with the poles from 
		Corollary \ref{cor:cauchy}, rational Krylov with nested poles obtained as in Section~\ref{sec:nested} (EDS) and rational Krylov with the poles of \texttt{markovfunm}. The bound is obtained from Corollary~\ref{cor:cauchy}.}
	\label{fig:exp-cauchy-1d-eigenvalues}
\end{figure}

\begin{table}
	\centering 
	\caption{Comparison of the time and number of iterations required for computing $A^{-\frac 12}v$ 
		with different relative tolerances using \texttt{markovfunm}, EDS, and extended Krylov. The argument  $A$ is the $100000 \times 100000$ matrix
		$\mathrm{trid}(-1, 2, -1)$.} \label{tab:times}
	\vskip 10pt
	\begin{tabular}{c|cc|cc|cc}
		\toprule
		&\multicolumn{2}{c|}{\texttt{markovfunm}}
		&\multicolumn{2}{c|}{EDS}
		&\multicolumn{2}{c}{extended Krylov}\\
		Relative error & Time (s) & Its & Time (s) & Its & Time (s) & Its \\
		\midrule
		$10^{-1}$   &	$0.37$ & $18$ &	$0.10$ & $7$  &	$0.32$ &	$20$  \\
		$10^{-2}$  &	$0.76$	& $29$ &	$0.26$	& $14$ &	$2.17$ &	$64$  \\
		$10^{-3}$ &	$1.17$	& $38$ &	$0.37$	& $18$ &	$4.79$ &	$106$ \\
		$10^{-4}$ &	$1.64$	& $47$ &	$0.43$	& $20$ &	$8.16$ &	$144$ \\
		$10^{-5}$  & $2.01$	& $53$ &	$0.58$	& $24$ &	$12.32$ &	$180$ \\
		$10^{-6}$  & $2.56$	& $61$ &	$0.82$	& $31$ &	$16.72$ &	$212$ \\
	\end{tabular}
\end{table}

\subsubsection{Other Cauchy-Stieltjes functions}
Finally, we test the convergence rate of the different  pole selection strategies for the Cauchy-Stieltjes functions $\frac{1-e^{-\sqrt z}}{z}, z^{-0.2},z^{-0.8}$ and the matrix argument $A=\mathrm{trid}(-1,2,-1)$.

The results reported in Figure~\ref{fig:exp-cauchy-1d-functions} show that in all cases the poles from Corollary~\ref{cor:cauchy} and the extended Krylov method provide the best and the worst convergence rates, respectively. The EDS converges faster than \texttt{markovfunm} apart from the case of $z^{-0.2}$ where the two strategies perform similarly.
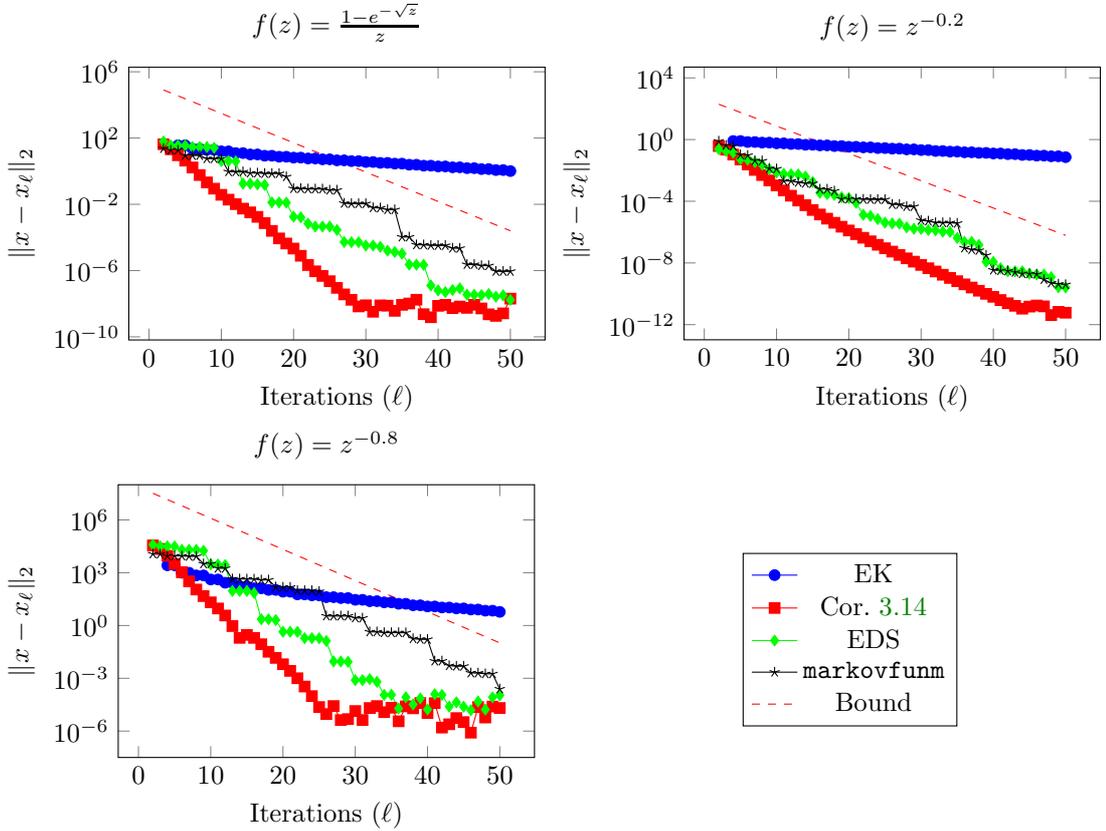
\begin{figure} 
	\begin{tikzpicture}
	\begin{semilogyaxis}[
	title = {$f(z) = \frac{1 - e^{-\sqrt{z}}}{z}$},
	legend style = {at={(1.05,1)}, anchor = north west}, 
	xlabel = Iterations ($\ell$), 
	ylabel = {$\norm{x  -x_{\ell}}_2$}, width = .48\linewidth, 
	height = .25\textheight,,ytick = {1e-10,1e-6, 1e-2, 1e2, 1e6}]			
	\addplot[blue, mark=*] table[x index = 0, y index = 1] {1d-50000-sqrtphi.dat};
	\addplot[red, mark=square*] table[x index = 0, y index = 2] {1d-50000-sqrtphi.dat};
	\addplot[green, mark=diamond*] table[x index = 0, y index = 3] {1d-50000-sqrtphi.dat};
	\addplot[black, mark=star] table[x index = 0, y index = 4] {1d-50000-sqrtphi.dat};				
	\addplot[red, dashed] table[x index = 0, y index = 5] {1d-50000-sqrtphi.dat};
	\end{semilogyaxis}
	\end{tikzpicture}~\begin{tikzpicture}
	\begin{semilogyaxis}[
	title = {$f(z) = z^{-0.2}$},
	legend style = {at={(1.05,1)}, anchor = north west}, 
	xlabel = Iterations ($\ell$), 
	ylabel = {$\norm{x  -x_{\ell}}_2$}, width = .48\linewidth, 
	height = .25\textheight,ytick = {1e-12,1e-8, 1e-4, 1e-0, 1e4},ymax=5e4]			
	\addplot[blue, mark=*] table[x index = 0, y index = 1] {1d-50000-z2.dat};
	\addplot[red, mark=square*] table[x index = 0, y index = 2] {1d-50000-z2.dat};
	\addplot[green, mark=diamond*] table[x index = 0, y index = 3] {1d-50000-z2.dat};
	\addplot[black, mark=star] table[x index = 0, y index = 4] {1d-50000-z2.dat};				
	\addplot[red, dashed] table[x index = 0, y index = 5] {1d-50000-z2.dat};
	\end{semilogyaxis}
	\end{tikzpicture} \\
	\begin{tikzpicture}
	\begin{semilogyaxis}[
	title = {$f(z) = z^{-0.8}$},
	legend style = {at={(1.5,.75)}, anchor = north west}, 
	xlabel = Iterations ($\ell$), 
	ylabel = {$\norm{x  -x_{\ell}}_2$}, width = .48\linewidth, 
	height = .25\textheight,ytick = {1e-6,1e-3, 1e0, 1e3, 1e6},ymax=1e8]			
	\addplot[blue, mark=*] table[x index = 0, y index = 1] {1d-50000-z8.dat};
	\addplot[red, mark=square*] table[x index = 0, y index = 2] {1d-50000-z8.dat};
	\addplot[green, mark=diamond*] table[x index = 0, y index = 3] {1d-50000-z8.dat};
	\addplot[black, mark=star] table[x index = 0, y index = 4] {1d-50000-z8.dat};				
	\addplot[red, dashed] table[x index = 0, y index = 5] {1d-50000-z8.dat};
	\legend{EK, Cor.~\ref{cor:cauchy}, EDS,  \texttt{markovfunm}, Bound}
	\end{semilogyaxis}
	\end{tikzpicture}
	\caption{Convergence history of the different projection spaces for the evaluation of $f(A)v$ for different Cauchy-Stieltjes
		functions $f(z)$ and the matrix argument$A=\mathrm{trid}(-1, 2, -1)$ of size
		$50000 \times 50000$. The methods tested are extended Krylov (EK), rational Krylov with the poles from 
		Corollary \ref{cor:cauchy}, rational Krylov with nested poles obtained as in Section~\ref{sec:nested} (EDS) and rational Krylov with the poles of \texttt{markovfunm}. The bound is obtained from Corollary~\ref{cor:cauchy}.}
	\label{fig:exp-cauchy-1d-functions}
\end{figure}
\section{Evaluating Stieltjes functions of matrices with Kronecker structure}
\label{sec:conv2d}

We consider the task of computing $f(\mathcal M) v$ where $\mathcal M = 
I \otimes A - B^T \otimes I$. This problem often stems from the discretizations of 2D differential equations, such as 
the matrix transfer method used for fractional diffusion equations \cite{yang2011novel}. 

We assume that $v = \vect(F)$, where $F = U_F V_F^T$ where $U_F$ and $V_F$ are tall and skinny matrices. For
instance, when $f(z) = z^{-1}$, this is equivalent to
solving the matrix equation $AX - XB = F$. It is well-known that, 
if the spectra of $A$ and $B$ are separated, then the low-rank property
is numerically inherited by $X$ \cite{Beckermann2019}. For more general functions
than $z^{-1}$, a projection scheme that preserves the Kronecker structure has been proposed  in \cite{benzi2017approximation} using polynomial Krylov methods. We
briefly review it in Section~\ref{sec:projection-scheme}. 
The method proposed in \cite{benzi2017approximation} uses
tensorized polynomial Krylov subspaces, so it is not well-suited when
$A$ and $B$ are ill-conditioned, as it
often happens discretizing differential operators. Therefore, we propose to replace the latter with a tensor product of rational Krylov subspaces and we provide
a strategy for the pole selection. This enables a faster convergence
and an effective scheme for the approximation of the action of such
matrix function in a low-rank format. 

The case of Laplace-Stieltjes functions, described 
in Section~\ref{sec:laplace}, follows easily by the analysis performed 
for the pole selection with a generic matrix $A$. The error analysis
for Cauchy-Stieltjes functions, presented in Section~\ref{sec:cauchy}, requires more care
and builds on the theory for the solution of Sylvester equations. 

\subsection{Projection methods that preserve Kronecker structure}
\label{sec:projection-scheme}

If $A, B$ are $n \times n$ matrices, applying the projection scheme 
described in Section~\ref{sec:stieltjes} 
requires to build an orthonormal basis $W$ for a (low-dimensional) subspace $\mathcal W\subseteq\mathbb C^{n^2}$, together with the projections of $W^* \mathcal M W = H$ and 
$v_{\mathcal W} = W^* v$. Then the action of $f(\mathcal M)$ on $v$ is approximated 
by:
\[
f(\mathcal M) v \approx W f(H) v_{\mathcal W}.
\]

The trick at the core of the
projection scheme proposed in \cite{benzi2017approximation} consists
in choosing a tensorized subspace of the form
$\mathcal W := \mathcal U \otimes \mathcal V$, spanned by an orthonormal basis of the form
$W = U \otimes V$, where $U$ and $V$ are orthonormal bases of 
$\mathcal U\subseteq\mathbb C^n$ and $\mathcal V\subseteq\mathbb C^n$, respectively. 
With this choice, the projection of $\mathcal M$ 
onto $\mathcal U \otimes \mathcal V$ retains the same structure, that is 
\[
(U \otimes V)^* \mathcal M (U \otimes V) = I \otimes A_U - B_V^T \otimes I,
\]
where $ A_{\mathcal U} = U^* A U$ and $B_{\mathcal V} = V^* B V$. 

Since in our case $v = \vect(F)$ and $F = U_F V_F^T$, this 
enables to exploit the low-rank structure as well. Indeed, the
projection of $F$ onto $\mathcal U \otimes \mathcal V$ can be written
as $v_{\mathcal W} = \vect(F_{\mathcal W})=\vect((U^* U_F) ( V_F^TV))$. 
The high-level structure of the procedure is sketched in 
Algorithm~\ref{alg:subspace}.

\begin{algorithm}
	\caption{Approximate $\vect^{-1}(f(\mathcal M) \vect(F))$}\label{alg:split}
	\begin{algorithmic}[1]
		\Statex{{\bf procedure}} KroneckerFun($f$, $A$, $B$, $U_F$, $V_F$)\Comment{Compute $f(\mathcal M) \vect(F)$}
		\State $\quad\!$ $U,V \gets $ \label{step:rhs} orthonormal bases for the selected subspaces. 
		\State $\quad\!$ $A_{\mathcal U} \gets U^* A U$
		\State $\quad\!$ $B_{\mathcal V} \gets V^* B V$
		\State $\quad\!$ $ F_{\mathcal W} \gets U^* U_F (V_F^TV)$
		\State $\quad\!$ $Y \gets \vect^{-1}(f(I \otimes  A_{\mathcal U}-B_{\mathcal V}^T \otimes I ) \vect( F_{\mathcal W}))$. \label{line:f(M)}
		\State $\quad$ \Return $UYV^*$
		\Statex{{\bf end procedure}} 
	\end{algorithmic}
	\label{alg:subspace}
\end{algorithm}

At the core of Algorithm~\ref{alg:subspace} is the evaluation of the
matrix function on the projected matrix $I \otimes A_{\mathcal U} - B_{\mathcal V}^T \otimes I$. 
Even when $U, V$ have a low dimension $k\ll n$, this matrix is $k^2 \times k^2$, 
so it is undesirable to build it explicitly and then evaluate $f(\cdot)$ on it. 

When $f(z) = z^{-1}$, it is well-known that such evaluation can be performed 
in $k^3$ flops by the Bartels-Stewart algorithm \cite{Bartels1972}, 
in contrast to
the $k^6$ complexity that would be required by a generic dense solver
for the system defined by $I \otimes A_{\mathcal U} - B_{\mathcal V}^T \otimes I$. For 
a more general function, we can still design a $\mathcal O(k^3)$ 
procedure for the evaluation of $f(\cdot)$ in our case. Indeed, since $A_{\mathcal U}$ and $B_{\mathcal V}$ are Hermitian, 
we may diagonalize them using a unitary transformation
as follows:
\[
Q_A^* A_{\mathcal U} Q_A = D_A, \qquad 
Q_B^* B_{\mathcal V} Q_B = D_B. 
\]
Then, the evaluation of the matrix function $f(z)$ 
with argument $I\otimes A_{\mathcal U} - B_{\mathcal V}^T\otimes I$ can be recast 
to a scalar problem 
by setting
\[
f(I\otimes A_{\mathcal U} - B_{\mathcal V}^T\otimes I) \mathrm{vec}(U^*F V) = 
\left( \overline Q_B \otimes {Q}_A \right) f(\mathcal D)
\left(Q_B^T \otimes {Q}_A^*\right)  \mathrm{vec}(U^*F V), 
\]
where $\mathcal D := I\otimes D_A - D_B \otimes I$. 
If we denote by $X = \vect^{-1}(f(\mathcal M) \mathrm{vec}(F))$ and
with $D$ the matrix  defined by $D_{ij} = (D_A)_{ii} - (D_B)_{jj}$, then
\[
X = Q_A \left[ f^\circ ( D ) \circ (Q_A^* U^*F V Q_B) \right] Q_B^*, 
\]
where $\circ$ denotes the Hadamard product and
$f^\circ(\cdot)$ the function $f(\cdot)$ applied
\emph{component-wise} to the entries of $D$: $[f^\circ(D)]_{ij} = f(D_{ij})$. 

Assuming that the matrices $Q_A, Q_B$ and the corresponding diagonal
matrices $D_A, D_B$, are available, this step  requires
$k^2$ scalar function evaluation, plus $4$ matrix-matrix
multiplications, for a total computational cost bounded
by $\mathcal O(c_f \cdot k^2 + k^3)$, where $c_f$ denotes the
cost of a single function evaluation. The procedure is
described in Algorithm~\ref{alg:diagonalization}.

\begin{algorithm}
	\caption{Evaluation of $f(I \otimes A_{\mathcal U} - B_{\mathcal V}^T \otimes I) \mathrm{vec}(U^*FV)$ 
		for normal $k \times k$
		matrices $A_{\mathcal U}$, $B_{\mathcal V}$}
	\begin{algorithmic}[1]
		\Procedure{funm\_diag}{$f$, $A_{\mathcal U}$, $B_{\mathcal V}$, $U^*FV$}
		\State $(Q_A, D_A) \gets \Call{Eig}{A_{\mathcal U}}$
		\State $(Q_B, D_B) \gets \Call{Eig}{B_{\mathcal V}}$
		\State $F_{\mathcal W} \gets Q_A^* U^*FV Q_B$
		\For{$i,j = 1,\ldots,n$}
		\State $X_{ij} \gets f((D_A)_{ii} + (D_B)_{jj})
		\cdot \left(F_{\mathcal W}\right)_{ij}$
		\EndFor
		\State \Return $\vect(Q_A X Q_B^*)$
		\EndProcedure 
	\end{algorithmic}
	\label{alg:diagonalization}
\end{algorithm}

\subsection{Convergence bounds for Laplace-Stieltjes functions of matrices with Kronecker structure}\label{sec:laplace}

The study of approximation methods for Laplace-Stieltjes functions 
is made easier by the following property of the matrix exponential: whenever $M,N$ commute, then $e^{M + N} = e^M e^N$. Since the matrices $B^T \otimes I$ and $I \otimes A$ commute,
we have 
\[
x = \vect(X)=f(\mathcal M) v=\int_0^{\infty}e^{-t\mathcal M} v\mu(t)\ dt= \vect\left(\int_0^\infty e^{-tA}U_FV_F^Te^{t B}\mu(t)\ dt\right).
\]
Consider projecting the matrix $\mathcal M$ onto a tensorized subspace
spanned by the Kronecker products of unitary matrices $U \otimes V$. This, 
combined with Algorithm~\ref{alg:subspace}, 
yields an approximation whose
accuracy is closely connected with the one of
approximating $e^{-tA}$ by projecting using $U$, and $e^{tB}$ using $V$. As discussed in Section~\ref{sec:stieltjes}, there exists a choice of poles that approximates
uniformly well the matrix exponential, and this can be leveraged here
as well. 

\begin{corollary} \label{cor:stieltjes-lapl-posdef}
	Let $f(z)$ be a Laplace-Stieltjes function, $A,-B$ be Hermitian positive definite with spectrum contained in $[a,b]$ and $X_\ell$ be the approximation of 
	$X = \vect^{-1}(f(\mathcal M)\vect(F))$ obtained using
	Algorithm~\ref{alg:subspace} with $U\otimes V$  orthonormal basis of $\mathcal{U_R}\otimes\mathcal{V_R}=\mathcal{RK}_\ell(A,U_F,\Pole_{\ell}^{[a,b]})\otimes\mathcal{RK}_\ell(B^T,V_F,\Pole_{\ell}^{[a,b]})$. Then, 
	\[
	\norm{X - X_\ell}_2 \leq 16\gamma_{\ell,\kappa}f(0^+)\rho_{[a,b]}^{\frac{\ell}{2}} 
	\norm{F}_2. 
	\]
\end{corollary}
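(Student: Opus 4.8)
The plan is to reduce the $2$D problem to two independent $1$D exponential-approximation problems, one in $A$ and one in $B$, and then integrate against the Laplace density $\mu(t)$. First I would use the commutativity of $I \otimes A$ and $B^T \otimes I$ to write, as in the text preceding the statement,
\[
X = \int_0^\infty e^{-tA} U_F V_F^T e^{tB}\,\mu(t)\ dt,
\]
and observe that Algorithm~\ref{alg:subspace}, run on the tensorized rational Krylov space $\mathcal{RK}_\ell(A,U_F,\Pole_\ell^{[a,b]})\otimes\mathcal{RK}_\ell(B^T,V_F,\Pole_\ell^{[a,b]})$, produces the analogous expression with $e^{-tA}$ replaced by $U e^{-tA_\ell} U^*$ and $e^{tB}$ replaced by $V e^{tB_\ell} V^*$ (here $A_\ell = U^*AU$, $B_\ell = V^* B^T V$, and $-B$ has spectrum in $[a,b]$ so $e^{tB}=e^{-t(-B)}$ is the decaying exponential that Corollary~\ref{cor:lapl} applies to). This last identification is where Lemma~\ref{lem:exact} is invoked: for each fixed $t$, the relevant rational function of $\mathcal M$ lies in the tensorized rational Krylov space, so the Galerkin approximation commutes with the integral.

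The key step is a telescoping (add-and-subtract) argument at the level of the integrand. For each fixed $t$ I would write
\[
e^{-tA} U_F V_F^T e^{tB} - U e^{-tA_\ell} U^* U_F V_F^T V e^{tB_\ell} V^*
= \bigl(e^{-tA} - U e^{-tA_\ell}U^*\bigr) U_F V_F^T e^{tB}
+ U e^{-tA_\ell} U^* U_F V_F^T\bigl(e^{tB} - V e^{tB_\ell} V^*\bigr),
\]
take spectral norms, and bound each term: the first factor of the first summand is controlled by Corollary~\ref{cor:lapl} (with $v=U_F$) by $8\gamma_{\ell,\kappa}\rho_{[a,b]}^{\ell/2}\norm{U_F}_2$, while $\norm{U_F V_F^T e^{tB}}_2 \le \norm{F}_2$ since $\norm{e^{tB}}_2=\norm{e^{-t(-B)}}_2\le 1$ (spectrum of $-B$ in $[a,b]\subset\mathbb R_+$); symmetrically for the second summand, using $\norm{U e^{-tA_\ell}U^*}_2\le 1$ and Corollary~\ref{cor:lapl} applied to $B^T$ with $v=V_F$. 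Writing $\norm{U_F}_2\norm{V_F}_2 = \norm{F}_2$ (up to the usual $\norm{U_FV_F^T}_2\le\norm{U_F}_2\norm{V_F}_2$, which is an equality here after normalization, or simply absorbed), the integrand error is bounded by $16\gamma_{\ell,\kappa}\rho_{[a,b]}^{\ell/2}\norm{F}_2$ pointwise in $t$, with the bound independent of $t$ — this $t$-uniformity is exactly what Corollary~\ref{cor:lapl} was designed to give.

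Finally I would integrate:
\[
\norm{X - X_\ell}_2 \le \int_0^\infty \norm{e^{-tA}U_FV_F^Te^{tB} - U e^{-tA_\ell}U^*U_FV_F^T V e^{tB_\ell}V^*}_2\,\mu(t)\ dt
\le 16\gamma_{\ell,\kappa}\rho_{[a,b]}^{\ell/2}\norm{F}_2 \int_0^\infty \mu(t)\ dt,
\]
and identify $\int_0^\infty \mu(t)\,dt = f(0^+)$ as in the proof of Corollary~\ref{cor:lapl-stielt}. I expect the main obstacle to be purely bookkeeping rather than conceptual: making sure the norm submultiplicativity $\norm{U_F V_F^T}_2 \le \norm{U_F}_2 \norm{V_F}_2$ and the bound $\norm{F}_2 = \norm{U_F V_F^T}_2$ are handled consistently so the two error contributions combine cleanly into the stated constant $16$ (twice the $8$ from Corollary~\ref{cor:lapl}), and that one correctly tracks which of $A$, $-B$ the decaying-exponential estimate is applied to. Everything else is a direct linear-operation-commutes-with-integral argument plus the triangle inequality.
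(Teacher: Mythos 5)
Your proposal is correct and follows essentially the same route as the paper: the same integral representation $X=\int_0^\infty e^{-tA}Fe^{tB}\mu(t)\,dt$, the same add-and-subtract of $Ue^{-tA_\ell}U^*Fe^{tB}$ (your telescoping decomposition is exactly this), two applications of Corollary~\ref{cor:lapl} (to $A$ and to $B^T$) each contributing a factor $8\gamma_{\ell,\kappa}\rho_{[a,b]}^{\ell/2}$, and the identification $\int_0^\infty\mu(t)\,dt=f(0^+)$. The only cosmetic difference is that the paper applies Corollary~\ref{cor:lapl} directly with the block vector $F$ (legitimate since $\mathcal{RK}_\ell(A,U_F,\Pole)=\mathcal{RK}_\ell(A,F,\Pole)$), which sidesteps the $\norm{U_F}_2\norm{V_F}_2$ versus $\norm{F}_2$ bookkeeping you flag at the end.
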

\begin{proof}
	If $f(z)$ is a Laplace-Stieltjes function, we may express the error 
	matrix $X - X_\ell$ 
	as follows:
	\[
	X - X_\ell = \int_0^\infty 
	\left[ e^{-tA} F e^{tB} - Ue^{-tA_{\ell}} U^* F V e^{tB_{\ell}} V^* \right]\mu(t)  \ dt, 
	\]
	where $A_{\ell}=U^*AU$ and $B_{\ell}=V^*BV$.
	Adding and subtracting the quantity $Ue^{-tA_{\ell}} U^*F e^{tB}$ yields
	the following inequalities:
	\begin{equation*} 
	\begin{split}
	\norm{X-X_\ell}_2&\leq \int_0^\infty\norm{e^{-tA}F-Ue^{-tA_{\ell}}(U^*F)}_2 \norm{e^{tB}}_2 \mu(t)\ dt\\
	&+ \int_0^\infty\norm{e^{tB^T}F^T-Ve^{tB_{\ell}^T}(V^*F^T)}_2 \norm{e^{-tA_{\ell}}}_2\mu(t)\ dt\\
	&\leq 16\gamma_{\ell,\kappa}   \int_0^\infty    \mu(t)\ dt \cdot\rho_{[a,b]}^{\frac{\ell}{2}} \norm{F}_2
	\end{split}
	\end{equation*}
	where in the last step we used Corollary~\ref{cor:lapl} for both addends.
\end{proof}

\begin{example} \label{ex:lapl2d}
	To test the proposed projection spaces we consider the same matrix $A$ of Example~\ref{ex:lapl1d}, 
	and we evaluate the function $\varphi_1$ to $\mathcal M = I \otimes A + A \otimes I$, 
	applied to a vector $v = \vect(F)$, where $F$ is a random rank $1$ matrix, 
	generated by taking the outer product of two unit vectors with normally 
	distributed entries. The results are reported in Figure~\ref{fig:exp-lapl-2d}.
	\begin{figure}
		\begin{tikzpicture}
		\begin{semilogyaxis}[
		legend style={at={(1.05,1)}, anchor = north west},
		xlabel = Iterations ($\ell$), 
		ylabel = {$\norm{X  -X_{\ell}}_2$}, width = .6\linewidth, 
		height = .3\textheight]
		\addplot[no marks, dashed, red] table[x index = 0, y index = 1] {laplace_stieltjes_2D.dat};
		\addplot[blue, mark=*] table[x index = 0, y index = 2] {laplace_stieltjes_2D.dat};
		\addplot[brown, mark=triangle*] table[x index = 0, y index = 3]{laplace_stieltjes_2D.dat};
		\addplot[mark=square*, red] table[x index = 0, y index = 4]{laplace_stieltjes_2D.dat};
		\addplot[mark=diamond*, green] table[x index = 0, y index = 5]{laplace_stieltjes_2D.dat};
		\addplot[mark=o, black, only marks] table[x index = 0, y index = 7]{laplace_stieltjes_2D.dat};
		\legend{Bound from Cor.~\ref{cor:stieltjes-lapl-posdef}, 
			Extended Krylov, Polynomial Krylov, 
			Poles from Cor.~\ref{cor:stieltjes-lapl-posdef}, 
			EDS,
			Singular values of $X$};
		\end{semilogyaxis}
		\end{tikzpicture}
		\caption{Convergence history of the different projection spaces for the evaluation of  $\varphi_1(\mathcal M)v$ with
			the Kronecker structured matrix $\mathcal M = I \otimes A + A \otimes I$, 
			where $A$ is of size $1000 \times 1000$ and has condition number about $5 \cdot 10^5$.
			The singular values of the true solution $X$ are reported as well.}
		\label{fig:exp-lapl-2d}
	\end{figure}
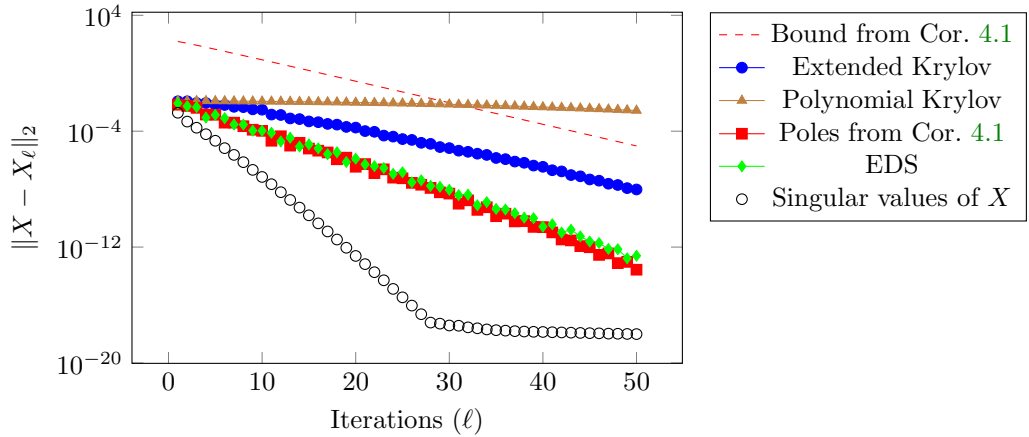
\end{example}	

\subsection{Convergence bounds for Cauchy-Stieltjes functions of matrices with Kronecker structure} \label{sec:cauchy}
As already pointed out in Section~\ref{sec:stieltjes}, evaluating a Cauchy-Stieltjes function requires a space which approximates 
uniformly well the shifted inverses of the matrix argument  under consideration. 
When considering a matrix $\mathcal M = I \otimes A - B^T \otimes I$ which
is Kronecker structured, this acquires a particular meaning. 

In fact, relying on the
integral representation \eqref{eq:stieltjes} of $f(z)$ we obtain:
\[
f(\mathcal M) v = \int_0^\infty \mu(t) (tI + \mathcal M)^{-1} \vect(F) \ dt
= \int_0^\infty \mu(t) X_t\ dt, 
\]
where $X_t := \vect^{-1}((tI + \mathcal M)^{-1} \vect(F))$ solves the matrix equation 
\begin{equation} \label{eq:sylvcauchy}
\left(t I + A\right) X_j - X_j B   = F.
\end{equation}
Therefore, to determine a good projection
space for the function evaluation, we should aim at determining a projection
space where these parameter dependent Sylvester equations can be solved
uniformly accurate. We note that, unlike in the Laplace-Stieltjes case, 
the evaluation of the resolvent does not split into the evaluation
of the shifted inverses of the factors, and this does not allow to 
apply  Theorem~\ref{thm:lapl-zol} for the factors $A$ and $B$. 

A possible strategy to determine an approximation space
is using polynomial Krylov subspaces 
$\mathcal{K}_m(t I + A, U_F) \otimes \mathcal{K}_m(B^T, V_F)$
for solving \eqref{eq:sylvcauchy} at a certain point $t$. 
Thanks to the shift invariance of 
polynomial Krylov subspaces, all these subspaces coincide with 
$\mathcal U_P \otimes \mathcal V_P=\mathcal{K}_m(A, U_F) \otimes \mathcal{K}_m(B^T, V_F)$. 
This observation is at the core of
the strategy proposed in \cite{benzi2017approximation}, which 
makes use of $\mathcal U_P \otimes \mathcal V_P$ in Algorithm~\ref{alg:subspace}. 
This allows 
to use the convergence theory for linear matrix equations to 
provide error bounds in the Cauchy-Stieltjes case, see \cite[Section 6.2]{benzi2017approximation}. 

Since rational Krylov subspaces are usually more effective in the solution
of Sylvester equations, it is natural to consider their use in place
$\mathcal U_P \otimes \mathcal V_P$. However, they are not shift invariant, 
and this makes the analysis not straightforward. 
Throughout this section,
we denote by $U\otimes V$ the orthonormal basis of the tensorized rational Krylov subspace
\begin{equation}\label{eq:spaces} 
\mathcal U_R\otimes \mathcal V_R := \mathcal{RK}_{\ell}(A, U_F, \Pole) \otimes \mathcal{RK}_{\ell}(B^T, V_F, \Xi)
\end{equation}
where $\Pole:=\{  \pole_1,\dots,\pole_\ell \}$ and 
$\Xi:=\{ \xi_1,\dots,\xi_\ell \}$ are the prescribed poles. 
We define the following polynomials of degree (at most) $\ell$: \begin{equation}\label{eq:poly}
p(z):=\prod_{j=1,\pole_j\neq \infty}^\ell (z-\pole_j), \qquad
q(z):=\prod_{j=1,\xi_j\neq \infty}^\ell (z-\xi_j)
\end{equation} 
and we denote by $A_{\ell}=U^*AU$,  $B_{\ell}=V^*BV$ the projected $(\ell k\times \ell k)$-matrices, where $k$ is the number of columns of $U_F$ and $V_F$.

In Section~\ref{sec:error-sylvester}, we recall and slightly extend some 
results about rational Krylov methods for Sylvester equations 
i.e., the case $f(z) = z^{-1}$. This will be the
building block for the convergence analysis of the approximation of Cauchy-Stieltjes 
functions in Section~\ref{sec:error-cauchy}. 

\subsubsection{Convergence results for Sylvester equations} 
\label{sec:error-sylvester}
Algorithm~\ref{alg:subspace} applied to $f(z) = z^{-1}$ coincides with
the Galerkin projection method for Sylvester equations \cite{simoncini2016computational}, whose error analysis can be found in  \cite{Beckermann2011}; the results in that paper relate the Frobenius norm
of the residual to a rational approximation problem.  
We state a slightly modified version of Theorem~2.1 in \cite{Beckermann2011}, that enables to bound the residual
in the Euclidean norm. The proof is reported in the Appendix~\ref{app:gal-res}.
\begin{theorem}\label{thm:gal-res}
	Let $A,-B$ be Hermitian positive definite with spectrum contained in $[a,b]$ and $X_\ell$ be the approximate solution returned by Algorithm~\ref{alg:subspace} using $f(z)=z^{-1}$ and the orthonormal 
	basis $U\otimes V$ of  $\mathcal U_R\otimes\mathcal V_R=\mathcal{RK}_{\ell}(A, U_F, \Pole) \otimes \mathcal{RK}_{\ell}(B^T, V_F, \Xi)$, then 
	\[
	\norm{AX_\ell - X_\ell B - F}_2\leq (1+ \kappa) \max\{\theta_\ell(I_A,I_B, \Pole),\ \theta_\ell(I_B,I_A, \Xi)\}\norm{F}_2.
	\] 
\end{theorem}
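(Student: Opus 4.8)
The plan is to follow the proof of \cite[Theorem~2.1]{Beckermann2011}, adapting it to the Euclidean norm; this adaptation is possible because here $A$ and $B$ are Hermitian. I would write $X_\ell = UYV^*$, where $Y$ solves the projected equation $A_\ell Y - Y B_\ell = U^* F V$. Since $U_F \in \mathcal{RK}_\ell(A,U_F,\Pole)$ and $V_F \in \mathcal{RK}_\ell(B^T,V_F,\Xi)$, one has $UU^* F VV^* = F$, and a short manipulation then produces the residual splitting
\[
R_\ell = AX_\ell - X_\ell B - F = (I - UU^*)\,A\,U Y V^* \;-\; U Y V^*\, B\,(I - VV^*),
\]
whose first summand has column space in $\mathrm{range}((I-UU^*)AU)$ and whose second summand is its mirror image with the two factors exchanged. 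Each summand would then be rewritten, exactly as in \cite{Beckermann2011}, by means of the resolvent representation $X = \frac{1}{2\pi\im}\oint_\Gamma (zI - A)^{-1} F (zI - B)^{-1}\,dz$ and its Galerkin counterpart, on a contour $\Gamma$ separating $\mathrm{spec}(A)\subseteq I_A$ from $\mathrm{spec}(B)\subseteq I_B$, together with the exactness property of Lemma~\ref{lem:exact}: since $r(A)U_F\in\mathrm{range}(U)$ for every $r\in\frac{\mathcal P_\ell}{\Pole}$, the first summand admits the skeleton representation already used in the proof of Theorem~\ref{thm:lapl-zol}, with a \emph{free} numerator polynomial of degree at most $\ell$ and denominator vanishing at the poles $\Pole$; the second summand is handled symmetrically with $r\in\frac{\mathcal P_\ell}{\Xi}$.

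The estimation of the two summands is the only place where the argument really differs from \cite{Beckermann2011}. Because $A$ and $B$ are Hermitian, $\norm{g(A)}_2 = \max_{\lambda\in\mathrm{spec}(A)}\abs{g(\lambda)}$ for every rational $g$, so the sub-multiplicativity $\norm{g(A)\,F\,h(B)}_2\le\norm{g(A)}_2\norm{F}_2\norm{h(B)}_2$ converts the operator-norm bounds on the rational factors into scalar sup-norms on $I_A$ and $I_B$; this replaces the Frobenius-norm estimates of \cite{Beckermann2011}, and it is essentially the whole modification. Bounding the remaining scalar integrals together with $\norm{A}_2\le b$, $\norm{A^{-1}}_2\le a^{-1}$ (and likewise for $B$), and then taking the infimum over the free numerators, I expect the first summand to be bounded by a constant times $\theta_\ell(I_A,I_B,\Pole)\,\norm{F}_2$ and the second by a constant times $\theta_\ell(I_B,I_A,\Xi)\,\norm{F}_2$; collecting the two contributions and bounding their sum by the sum of the constants times $\max\{\theta_\ell(I_A,I_B,\Pole),\theta_\ell(I_B,I_A,\Xi)\}\,\norm{F}_2$ gives the stated inequality, provided the accumulated constant works out to $1+\kappa$.

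The hard part will be exactly this last point: verifying that the contour integrals and the stray factors of $A$ and $B$ assemble into precisely the constant $1+\kappa$. Doing so requires handling the rational Arnoldi decompositions of $\mathcal{RK}_\ell(A,U_F,\Pole)$ and $\mathcal{RK}_\ell(B^T,V_F,\Xi)$ with some care — in particular the possibility of infinite poles $\pole_j=\infty$ or $\xi_j=\infty$, which simply drop the corresponding linear factors from $p(z)$ and $q(z)$ in \eqref{eq:poly} — and checking that the numerator in the skeleton representation of each summand really ranges over all of $\mathcal P_\ell$, so that the infimum indeed produces the $\theta_\ell$ quantities of Definition~\ref{def:theta}. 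No genuinely new idea beyond the construction of \cite{Beckermann2011} should be needed, only the passage from $\norm{\cdot}_F$ to $\norm{\cdot}_2$ described above.
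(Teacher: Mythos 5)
Your overall route is the same as the paper's: the proof in Appendix~\ref{app:gal-res} also follows \cite[Theorem~2.1]{Beckermann2011}, and your residual splitting $R_\ell = (I-UU^*)AUYV^* - UYV^*B(I-VV^*)$ is correct and coincides with the decomposition $\rho=\rho_{21}+\rho_{12}$ used there. However, two steps that carry the actual content are missing or would fail as you describe them. The first is the final assembly. You propose to add the bounds for the two summands and absorb the sum into a maximum, hoping the accumulated constant is $1+\kappa$; but each summand is individually bounded by $(1+\kappa)$ times the corresponding $\theta_\ell$ quantity, so the triangle inequality yields $2(1+\kappa)\max\{\cdots\}$, twice the stated constant. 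The paper avoids this by observing that the two summands satisfy $\rho_{12}=UU^*\rho(I-VV^*)$ and $\rho_{21}=(I-UU^*)\rho VV^*$, hence have mutually orthogonal column spaces \emph{and} mutually orthogonal row spaces, which gives the exact identity $\norm{\rho_{12}+\rho_{21}}_2=\max\{\norm{\rho_{12}}_2,\norm{\rho_{21}}_2\}$. Without this observation your argument proves the inequality only with an extra factor of $2$.

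The second gap is the origin of the constant $1+\kappa$ for each individual summand, which you leave as ``verifying that the contour integrals and the stray factors of $A$ and $B$ assemble into precisely $1+\kappa$.'' This is not a matter of inserting $\norm{A}_2\le b$ into scalar integrals: for an arbitrary rational function $r_B$ with poles $\Xi$, the piece $\rho_{12}$ is decomposed as $U(J_1-J_2)$, where $J_1=r_B(A_\ell)^{-1}U^*F\,r_B(B)V^*$ contributes the $1$, while $J_2$ is a contour integral that must first be identified through the algebraic identity $\mathcal S_{A_\ell,\widetilde B}\bigl(\mathcal S_{A_\ell,B}^{-1}(J_2)\bigr)=-r_B(A_\ell)^{-1}U^*FV\,r_B(B_\ell)V^*$, with the auxiliary matrix $\widetilde B=VB_\ell V^*-c(I-VV^*)$; only then do the operator bounds $\norm{\mathcal S_{A_\ell,\widetilde B}(X)}_2\le 2b\norm{X}_2$ and $\norm{\mathcal S_{A_\ell,B}^{-1}(X)}_2\le \norm{X}_2/(2a)$ produce the factor $\kappa=b/a$. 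This $J_1/J_2$ manipulation is the technical heart of the proof and is absent from your sketch; the Hermitian-to-sup-norm conversion you emphasize is the easy part and is indeed all that is needed to pass from the Frobenius-norm statement of \cite{Beckermann2011} to the Euclidean norm.
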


\begin{remark}
	Using the mixed norm inequality $\norm{AB}_F \leq \norm{A}_F \norm{B}_2$, one
	can state the bound in the Frobenius norm as well:
	\[
	\norm{AX_{G}^{(\ell)} - X_{G}^{(\ell)} B - F}_F \leq (1+ \kappa) \sqrt{  \theta_\ell^2(I_A,I_B, \Pole) + \theta_\ell^2(I_B,I_A, \Xi) } \cdot \norm{F}_F, 
	\]
	which is tighter than the one in \cite{Beckermann2011}.
\end{remark}

For our analysis, it is more natural to bound the approximation error of the
exact solution $X$, instead of the norm of the residual. Since the residual is closely related with
the backward error of the underlying linear system, bounding the
forward error $\norm{X - X_\ell}_2$ causes the appearances
of an additional condition number. 

\begin{corollary}\label{cor:sylv-error}
	If $X_\ell$ is the approximate solution 
	of the linear matrix equation $AX - XB = F$
	returned by Algorithm~\ref{alg:subspace} as in Theorem~\ref{thm:gal-res}, 
	then
	\[
	\norm{X_\ell - X}_2 \leq \frac{a +b}{2a^2}  \max\{\theta_\ell(I_A,I_B, \Pole),\ \theta_\ell(I_B,I_A, \Xi)\}\norm{F}_2
	\]
\end{corollary}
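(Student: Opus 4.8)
The plan is to derive the forward-error bound directly from the residual bound in Theorem~\ref{thm:gal-res} by inverting the Sylvester operator $\mathcal S : Z \mapsto AZ - ZB$. First I would set $E_\ell := X_\ell - X$ and let $R_\ell := AX_\ell - X_\ell B - F$ be the residual produced by Algorithm~\ref{alg:subspace}. Subtracting the exact equation $AX - XB = F$ from the definition of $R_\ell$ shows that the error itself satisfies a Sylvester equation with the residual as right-hand side, namely $A E_\ell - E_\ell B = R_\ell$.

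The second step is to bound $\norm{E_\ell}_2$ in terms of $\norm{R_\ell}_2$, i.e.\ to bound $\norm{\mathcal S^{-1}}$ in the spectral norm. Since $A$ is Hermitian positive definite with $\lambda_{\min}(A) \geq a$ and $-B$ is Hermitian positive definite with $\lambda_{\min}(-B) \geq a$ (so $\lambda_{\max}(B) \leq -a$), the matrix exponentials obey $\norm{e^{-tA}}_2 \leq e^{-ta}$ and $\norm{e^{tB}}_2 \leq e^{-ta}$ for every $t \geq 0$. Hence the integral $\int_0^\infty e^{-tA} R_\ell\, e^{tB}\, dt$ is absolutely convergent, and differentiating the integrand shows it equals the unique solution $E_\ell$ of $A E_\ell - E_\ell B = R_\ell$. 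Taking norms under the integral sign gives
\[
\norm{E_\ell}_2 \leq \int_0^\infty \norm{e^{-tA}}_2 \norm{R_\ell}_2 \norm{e^{tB}}_2\, dt \leq \norm{R_\ell}_2 \int_0^\infty e^{-2ta}\, dt = \frac{1}{2a}\,\norm{R_\ell}_2 .
\]
Equivalently, one may diagonalize $A$ and $B$ by unitary transformations and observe that the entrywise divisors $\lambda_i(A) - \lambda_j(B)$ all lie in $[2a, 2b]$, so that the inverse Sylvester operator has spectral norm at most $\frac{1}{2a}$.

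Finally I would combine this with Theorem~\ref{thm:gal-res}, which yields $\norm{R_\ell}_2 \leq (1+\kappa)\max\{\theta_\ell(I_A,I_B, \Pole),\, \theta_\ell(I_B,I_A, \Xi)\}\norm{F}_2$ with $\kappa = b/a$; since $\frac{1}{2a}(1+\kappa) = \frac{a+b}{2a^2}$, this is exactly the asserted constant. This is a short argument, and there is no genuine obstacle beyond ensuring that the $\frac{1}{2a}$ bound on $\mathcal S^{-1}$ is established for the \emph{spectral} norm rather than the Frobenius norm; the integral representation above (or, alternatively, the diagonalization remark) handles this cleanly precisely because $A$ and $B$ are normal and their spectra are separated by a gap of at least $2a$.
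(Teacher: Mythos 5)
Your argument is correct and follows essentially the same route as the paper: the error $X_\ell - X$ solves the Sylvester equation with the residual as right-hand side, the residual is bounded via Theorem~\ref{thm:gal-res}, and the inverse Sylvester operator is bounded by $\frac{1}{2a}$ --- the only difference being that the paper cites \cite[Theorem~2.1]{ripjohn} for this last bound, whereas you prove it directly through the integral representation $E_\ell = \int_0^\infty e^{-tA} R_\ell e^{tB}\,dt$. One small caution: your parenthetical ``alternative'' via diagonalization does not by itself give the \emph{spectral}-norm bound, since entrywise division by $\lambda_i(A)-\lambda_j(B)$ is a Hadamard multiplication whose spectral norm is not controlled by the largest entry of the Cauchy matrix; it is precisely the integral representation of that Cauchy matrix (or the cited result) that justifies the $\frac{1}{2a}$ constant, so the integral argument should remain the primary one.
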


\begin{proof}
	We note that $X_\ell - X$ solves the Sylvester equation
	$
	A(X_\ell - X) - (X_\ell - X) B = R, 
	$
	where $R := AX_\ell - X_\ell B - F$
	verifies $\norm{R}_2 \leq \left(1+\kappa\right) \max\{\theta_\ell(I_A,I_B, \Pole),\ \theta_\ell(I_B,I_A, \Xi)\}\norm{F}_2$, thanks
	to Theorem~\ref{thm:gal-res}. In view of \cite[Theorem~2.1]{ripjohn}
	$\norm{X_\ell - X}_2$ is bounded by
	$\frac{1}{2a} \norm{R}_2$. 
\end{proof}

\subsubsection{Error analysis for Cauchy-Stieltjes functions} \label{sec:error-cauchy} In view of Equation~\ref{eq:sylvcauchy}, 
the evaluation of Cauchy-Stieltjes function is closely
related to solving (in a uniformly accurate way) 
parameter-dependent Sylvester equations. This connection is 
clarified by the following result.

\begin{theorem} \label{thm:stieltjes-zolotarev}
	Let $f(z)$ be a Cauchy-Stieltjes function, 
	$A,-B$ be Hermitian positive definite with spectrum contained in $[a,b]$ and $X_\ell$ be the approximate evaluation of $f(z)$ returned by 
	Algorithm~\ref{alg:subspace} using the orthonormal basis $U\otimes V$ of the
	subspace $\mathcal U_R\otimes \mathcal V_R=\mathcal{RK}_{\ell}(A, U_F, \Pole) \otimes \mathcal{RK}_{\ell}(B^T, V_F, \Xi)$. Then,
	\begin{equation} \label{eq:cauchy-bound}
	\norm{X -X_\ell}_2 \leq f(2a) \cdot(1+\kappa)\cdot \norm{F}_2 \cdot \max_{t \geq 0} \left[ \max \Big\{\theta_\ell( I_A, I_B-t,\Pole),\ \theta_\ell(I_B,I_A+t,\Xi)\Big\}\right],
	\end{equation}
	where $\kappa=\frac ba$ and
	$\theta_\ell(\cdot,\cdot,\cdot)$ is as in Definition~\ref{def:theta}. 
\end{theorem}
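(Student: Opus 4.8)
The plan is to split the integral representation $X - X_\ell = \int_0^\infty \mu(t)\,[X_t - (X_\ell)_t]\,dt$, where $X_t$ solves the parameter-dependent Sylvester equation \eqref{eq:sylvcauchy}, namely $(tI+A)X_t - X_t B = F$, and $(X_\ell)_t$ is its Galerkin approximation obtained by projecting onto the same fixed tensorized rational Krylov space $\mathcal U_R \otimes \mathcal V_R$ that defines $X_\ell$. The key point — which must be checked — is that applying Algorithm~\ref{alg:subspace} with $f(z) = z^{-1}$ to the shifted matrix $\mathcal M + tI$ and the same bases $U, V$ produces exactly the compression of $\mathcal M + tI$, and the projected function evaluation of $X_\ell$ decomposes accordingly across $t$. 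Concretely, since $f(\mathcal M) = \int_0^\infty \mu(t)(tI+\mathcal M)^{-1}\,dt$ and $X_\ell = U\,\vect^{-1}\!\big(f(I\otimes A_\ell - B_\ell^T\otimes I)\vect(F_{\mathcal W})\big)\,V^*$ by linearity of the resolvent integral, one gets $X_\ell = \int_0^\infty \mu(t)(X_\ell)_t\,dt$ where $(X_\ell)_t$ is the Galerkin solution of $(tI+A_\ell)Y - Y B_\ell = F_{\mathcal W}$ lifted back by $U(\cdot)V^*$. Hence $\norm{X - X_\ell}_2 \le \int_0^\infty \mu(t)\,\norm{X_t - (X_\ell)_t}_2\,dt$.

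**Bounding the per-$t$ error.** For each fixed $t$, $(X_\ell)_t$ is the Galerkin approximation of the Sylvester equation with coefficient matrices $tI+A$ (spectrum in $[a+t, b+t]$) and $B$ (spectrum in $[-b,-a]$), built from the rational Krylov spaces $\mathcal{RK}_\ell(tI+A, U_F, \Pole - t)$ and $\mathcal{RK}_\ell(B^T, V_F, \Xi)$ — note that shifting $A$ by $tI$ shifts the poles by $-t$, but $\mathcal{RK}_\ell(tI+A, U_F, \Pole-t) = \mathcal{RK}_\ell(A, U_F, \Pole)$ since $(tI+A-\psi I)^{-1} = (A - (\psi-t)I)^{-1}$, so the space is unchanged. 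I would then invoke Corollary~\ref{cor:sylv-error} (or directly Theorem~\ref{thm:gal-res}) applied to this shifted equation. Reading off the quantities: the analogue of $a$ for the shifted problem is $a + t$ (smallest eigenvalue of $tI+A$), and the relevant condition-number-type factor becomes $\kappa_t = (b+t)/(a+t) \le b/a = \kappa$, which is monotonically decreasing in $t$, so $1+\kappa_t \le 1+\kappa$ uniformly. The two sets entering the $\theta_\ell$ terms are: for the $\Pole$-side, $I_{tI+A} = [a+t,b+t]$ against $-I_B = [a,b]$, i.e. by shift invariance (Lemma~\ref{lem:tp}\ref{lem:tp:shift}) this equals $\theta_\ell(I_A, I_B - t, \Pole)$ after subtracting $t$; for the $\Xi$-side, $I_B$ against $-(tI+A)$-spectrum $=[-b-t,-a-t]$, giving $\theta_\ell(I_B, I_A + t, \Xi)$. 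The forward-error constant $\frac{(a+t)+(b+t)}{2(a+t)^2}$ is bounded by $\frac{a+b}{2a^2}$ at $t=0$ and decays, but I need to fold the $\mu(t)$ integral against it — more cleanly, bound the resolvent error directly: $\norm{X_t - (X_\ell)_t}_2 \le \frac{1}{2(a+t)}\norm{R_t}_2$ and peel the constant $\frac{1}{2(a+t)}$ together with $\mu(t)$ below.

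**Assembling and the constant $f(2a)$.** Pulling the $t$-uniform factor $\Theta_\ell := \max_{t\ge 0}\max\{\theta_\ell(I_A, I_B - t, \Pole),\ \theta_\ell(I_B, I_A + t, \Xi)\}$ out of the integral, together with $(1+\kappa)$ and $\norm{F}_2$, leaves $\int_0^\infty \mu(t)\,\frac{1}{2(a+t)}\,dt \cdot$ (up to the residual-to-error conversion). To identify this with $f(2a)$, recall $f(z) = \int_0^\infty \frac{\mu(t)}{t+z}\,dt$, so $\int_0^\infty \frac{\mu(t)}{t+2a}\,dt = f(2a)$; the factor $2$ in $2a$ arises because the residual bound carries a $\frac{1}{2(a+t)}$, and $\frac{1}{2(a+t)} = \frac{1}{2}\cdot\frac{1}{(a+t)}$ combined with $(1+\kappa_t)$ evaluated sharply — more precisely, one writes $\norm{X_t-(X_\ell)_t}_2 \le (1+\kappa_t)\,\Theta_\ell\,\frac{1}{2(a+t)}\norm{F}_2$ and uses $\frac{1+\kappa_t}{2(a+t)} = \frac{1}{2(a+t)} + \frac{b+t}{2(a+t)^2} \le \frac{1}{2(a+t)} + \frac{1}{2(a+t)} = \frac{1}{a+t} \le \frac{2}{2a+t+t}$... here care is needed; the clean inequality is $\frac{1+\kappa_t}{2(a+t)} \le \frac{1}{t+2a}$ for $t \ge 0$ when $b \le 3a$, and in general one argues $(a+t)+(b+t) \le 2(t+2a)\cdot\frac{a+t}{?}$ — I would verify the elementary inequality $\frac{(a+t)+(b+t)}{2(a+t)^2} \le \frac{C}{t+2a}$ with the paper's claimed $C$; assuming it holds with constant matching $f(2a)$, integrating against $\mu(t)$ yields exactly $f(2a)\,(1+\kappa)\,\norm{F}_2\,\Theta_\ell$.

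**Main obstacle.** The delicate step is the bookkeeping of the scalar constants: tracking how the $t$-dependent pieces $(1+\kappa_t)$ and $\frac{1}{2(a+t)}$ (or $\frac{(a+t)+(b+t)}{2(a+t)^2}$) combine and are absorbed into $\int_0^\infty \frac{\mu(t)}{t+2a}\,dt = f(2a)$, i.e. proving the pointwise inequality $\frac{(a+t)+(b+t)}{2(a+t)^2} \le \frac{1}{t+2a}$ (or whatever the exact needed bound is) uniformly in $t \ge 0$ and $b \ge a$. The structural part — decomposing the projection across $t$, using shift-invariance of the rational Krylov space under $A \mapsto A + tI$, and recognizing the two $\theta_\ell$ terms — is routine given Theorem~\ref{thm:gal-res} and Lemma~\ref{lem:tp}. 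I expect the proof to be short once the shift-invariance observation and the scalar inequality are pinned down.
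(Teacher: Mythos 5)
Your strategy is exactly the paper's: write $X=\int_0^\infty X_t\,\mu(t)\,dt$ with $X_t$ solving $(tI+A)X_t-X_tB=F$, observe that Algorithm~\ref{alg:subspace} commutes with this decomposition so that $X_\ell=\int_0^\infty UY_tV^*\,\mu(t)\,dt$ with $Y_t$ the Galerkin solution of the projected shifted equation, use the shift invariance $\mathcal{RK}_\ell(A,U_F,\Pole)=\mathcal{RK}_\ell(tI+A,U_F,\Pole+t)$ together with Lemma~\ref{lem:tp}-\ref{lem:tp:shift} to identify the two $\theta_\ell$ terms, and integrate. All of that is sound (your ``$\Pole-t$'' is a sign slip --- your own parenthetical identity forces $\Pole+t$ --- but the quantity $\theta_\ell(I_A,I_B-t,\Pole)$ you land on is the correct one).

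The gap sits precisely in the step you flag as delicate: you read off the constants for the shifted equation as if \emph{both} coefficients had their spectra translated by $t$, whereas only $A$ is shifted. The spectra are $[a+t,b+t]$ for $tI+A$ and $[-b,-a]$ for $B$, so the separation entering \cite[Theorem~2.1]{ripjohn} is $(a+t)-(-a)=t+2a$, not $2(a+t)$, and the factor from Theorem~\ref{thm:gal-res} becomes $1+\frac{(b+t)+b}{t+2a}=\frac{2(t+a+b)}{t+2a}$, not $1+\frac{b+t}{a+t}$. Your claimed per-$t$ bounds are therefore strictly smaller than what the cited results actually deliver, i.e.\ unjustified, and you leave the closing scalar inequality conditional (``assuming it holds with constant matching $f(2a)$''); your candidate $\frac{1+\kappa_t}{2(a+t)}\le\frac{1}{t+2a}$ indeed fails for $b>3a$. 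With the correct intervals no such verification is needed: the per-$t$ constant is $C(t)=\frac{2(t+a+b)}{(t+2a)^2}=\frac{2(t+a+b)}{t+2a}\cdot\frac{1}{t+2a}$, the first fraction is decreasing in $t$ and equals $1+\kappa$ at $t=0$, and $\int_0^\infty\frac{\mu(t)}{t+2a}\,dt=f(2a)$ produces the stated bound immediately. So your argument closes, but only after the spectral intervals of the shifted problem are identified correctly; as written, the constant bookkeeping does not follow from the results you invoke.
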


\begin{proof}
	Applying the definition of $f(\mathcal M)$ we have 
	$
	f(\mathcal M) \vect{(F)} = 
	\int_0^\infty  (tI + \mathcal M)^{-1} \vect{(F)}\mu(t)\ dt. 
	$	
	We note that, for any $t \geq 0$, the vector $\vect(X_t) := (tI + \mathcal M )^{-1} \vect(F)$ is 
	such that $X_t$ solves the Sylvester equation 
	$(tI + A) X_t - X_t B = F$. Then, we can write $X$ as 
	$
	X = \int_0^\infty  X_t\mu(t)\ dt. 
	$
	
	Let us consider the approximation $U Y_t V^*$ to $X_t$ obtained by solving the projected
	Sylvester equation $(tI + U^* A U) Y_t - Y_t (V^* B V ) = U^* F V$, 
	and $Y = \int_0^\infty  Y_t\mu(t)\ dt$. 
	We remark that $\mathcal {RK}_\ell(A,U_F,\Pole)=\mathcal {RK}_\ell(tI + A,U_F,\Pole+t)$.
	
	Then, relying  on Corollary~\ref{cor:sylv-error}, 
	we can bound the error $R_t := \norm{X_t - UY_tV^*}_2$ with
	\[
	R_t \leq C(t) \cdot \max\left\{ 
	\theta_\ell\left(I_A+t, I_B, \Pole+t\right), 
	\theta_\ell\left(I_B, I_A+t, \Xi\right)
	\right\}\norm{F}_2, 
	\]
	where $C(t) := \frac{2(t+a+b)}{(t+2a)^2}$. Making use of 
	Lemma~\ref{lem:tp}-\ref{lem:tp:shift} we get:
	\begin{align*}
	R_t &\leq C(t) \cdot \underbrace{\max\left\{ 
		\theta_\ell\left(I_A, I_B-t, \Pole\right), 
		\theta_\ell\left(I_B, I_A+t, \Xi\right)
		\right\}}_{:= \Theta_\ell(t)}\norm{F}_2. 
	\end{align*}
	An estimate for the error on $X$ is obtained by integrating
	$R_t$: 
	\begin{align*}
	\norm{X- X_\ell}_2 &\leq \int_0^\infty \mu(t) \frac{2(t+a+b)\norm{F}_2}{(t+2a)^2}\Theta_\ell(t) dt \\
	&\leq (1+\kappa)\norm{F}_2\int_{0}^\infty \frac{\mu(t)}{t+2a} \Theta_\ell(t) dt  \\
	&\leq f(2a) \cdot(1+\kappa)\cdot \norm{F}_2 \cdot \max_{t\geq 0}\Theta_\ell(t), 
	\end{align*}
	where we used that 
	the function $\frac{2(t+a+b)}{t+2a}$ is maximum over $[0, \infty]$
	at $t=0$. 
\end{proof}

Inspired by Theorem~\ref{thm:gal-res}, we look at the construction
of rational functions that make the quantities $\theta_\ell(I_A,I_B-t,\Pole)$ and $\theta_\ell(I_B,I_A+t,\Xi)$ small. 
If we choose $\Xi=-\Pole$  then \eqref{eq:cauchy-bound} simplifies to
\begin{equation}\label{eq:simplify}
\norm{X -X_\ell}_2 \leq f(2a) \cdot(1+\kappa)\cdot \norm{F}_2 \cdot \max_{t \geq 0} \theta_\ell( I_A, I_B-t,\Pole),
\end{equation}
because $\theta_\ell( I_A, I_B-t,\Pole)=\theta_\ell( I_A, -I_A-t,\Pole)=\theta_\ell( -I_A, I_A+t,-\Pole)=\theta_\ell( I_B, I_A+t,-\Pole)$, in view of Lemma~\ref{lem:tp}-\ref{lem:tp:mobius}.

Similarly to the analysis done for Cauchy-Stieltjes function for a generic
matrix $A$, we may consider a M\"obius transform that maps the 
Zolotarev problem involving the point at infinity in a more familiar form. 
More precisely, we aim at mapping the set $[-\infty, -a] \cup [a, b]$ into $[-1, -\widetilde a]\cup[\widetilde a, 1]$ --- for some $\widetilde a\in (0,1)$. Then,
we make use of Theorem~\ref{thm:zolotarev} and Lemma~\ref{lem:tp}-\ref{lem:tp:mobius} to provide a  choice of $\Pole$ that makes the quantity 
$\theta_\ell(I_A, I_B-t, \Pole)$ small, independently of $t$. 

\begin{lemma}  \label{lem:mobius}
	The M\"obius transformation \[
	T(z):=\frac{\Delta + z - b}{\Delta - z + b},  \qquad 
	\Delta := \sqrt{b^2 - a^2},
	\]
	maps $[-\infty,-a]\cup[a,b]$ into $[-1, -\widetilde a]\cup[\widetilde a, 1]$, with $\tilde a := \frac{\Delta + a - b}{\Delta - a + b}$. The inverse map
	$T(z)^{-1}$ is:
	\[
	T^{-1}(z):= \frac{(b+\Delta)z+b-\Delta}{1+z}.
	\]
	In addition, we have $\tilde a^{-1} \leq 2b/a$, and therefore 
	$\rho_{[\tilde a, 1]} \leq \rho_{[a, 2b]}$. 
\end{lemma}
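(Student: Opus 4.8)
The plan is to mirror the proof of Lemma~\ref{lem:mobius1d}: verify the interval correspondences by evaluating $T$ at the four relevant endpoints together with a monotonicity argument, read off the inverse by direct substitution, and then simplify $\widetilde a$ into a form from which $\widetilde a^{-1}\le 2b/a$ is immediate.

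First I would note that, written as $T(z) = \frac{z + (\Delta - b)}{-z + (\Delta + b)}$, $T$ is a genuine M\"obius transform since $\alpha\delta - \beta\gamma = (\Delta+b) + (\Delta-b) = 2\Delta \neq 0$ (because $0<a<b$ forces $\Delta = \sqrt{b^2-a^2}>0$), and its only pole, at $z = \Delta + b > b$, lies outside $[-\infty,b]$. On that interval $T$ is continuous and $T'(z) = \frac{2\Delta}{(-z+\Delta+b)^2} > 0$, hence strictly increasing, and in particular increasing on $[-\infty,-a]\cup[a,b]$. It therefore suffices to compute $T$ at $-\infty$, $-a$, $a$, $b$: one gets $T(-\infty) = -1$ (ratio of leading coefficients), $T(b) = \Delta/\Delta = 1$, $T(a) = \frac{\Delta+a-b}{\Delta-a+b} = \widetilde a$ by definition, and $T(-a) = \frac{\Delta-(a+b)}{\Delta+(a+b)} = -\widetilde a$ after a one-line cross-multiplication using $\Delta^2 = b^2-a^2$. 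By monotonicity this yields $T([-\infty,-a]) = [-1,-\widetilde a]$ and $T([a,b]) = [\widetilde a,1]$ (noting $0<\widetilde a<1$ since $b+\Delta>a$). For the inverse, I would verify $T^{-1}\circ T = \mathrm{id}$ by substitution: the numerator of $T^{-1}(T(z))$ collapses to $2\Delta z$ (the $z$-free terms cancel) over a denominator that collapses to $2\Delta$, giving $z$.

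For the estimate, the crucial simplification is
\[
\widetilde a^{-1} = \frac{\Delta + (b-a)}{\Delta - (b-a)} = \frac{b+\Delta}{a},
\]
obtained by multiplying numerator and denominator by $\Delta + (b-a)$ and using $\Delta^2 - (b-a)^2 = (b^2-a^2)-(b-a)^2 = 2a(b-a)$ together with $(\Delta+(b-a))^2 = 2(b-a)(b+\Delta)$. Since $\Delta = \sqrt{b^2-a^2}\le b$, this gives $\widetilde a^{-1} \le 2b/a$ directly. Finally, because $\rho_{[\alpha,\beta]} = \exp(-\pi^2/\log(4\beta/\alpha))$ depends only on, and is monotonically increasing in, the ratio $\beta/\alpha$, and $[\widetilde a,1]$ has ratio $\widetilde a^{-1}\le 2b/a$ (the ratio of $[a,2b]$), we conclude $\rho_{[\widetilde a,1]} \le \rho_{[a,2b]}$.

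There is no genuine obstacle here; everything reduces to elementary manipulations with a M\"obius map. The only two points requiring care are (i) pinning down which half of $[-\infty,-a]\cup[a,b]$ maps to which half of $[-1,-\widetilde a]\cup[\widetilde a,1]$, handled by the sign of $T'$ plus the four endpoint values, and (ii) picking the rationalization that collapses $\widetilde a^{-1}$ to $(b+\Delta)/a$, after which the bound on $\rho$ is immediate from its monotonicity in the condition number.
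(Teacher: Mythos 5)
Your proof is correct and follows essentially the same route as the paper, which simply defers to the argument of Lemma~\ref{lem:mobius1d}: endpoint evaluation of $T$ (with the symmetry $T(-a)=-\widetilde a$), direct verification of the inverse, and an elementary bound on $\widetilde a^{-1}$. Your exact simplification $\widetilde a^{-1}=(b+\Delta)/a$, from which $\widetilde a^{-1}\le 2b/a$ follows via $\Delta\le b$, is a clean (and slightly sharper) alternative to mimicking the $\sqrt{1-t}\le 1-\tfrac t2$ estimate used in Lemma~\ref{lem:mobius1d}, but it does not change the nature of the argument.
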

\begin{proof}
	The proof can be easily obtained following the same steps of Lemma~\ref{lem:mobius1d}. As in that case, the overestimate
	introduced by the inequality $\rho_{[\tilde a, 1]} \leq \rho_{[a, 2b]}$
	is negligible in practice (see Remark~\ref{rem:estimate-cauchy-exponent}). 
\end{proof}

In light of the previous result, we consider Theorem~\ref{thm:stieltjes-zolotarev} with the choice of poles 
\begin{equation} \label{eq:cauchy-poles}
\Pole = \Pole_{C_2,\ell}^{[a,b]} := T^{-1}(\Pole_\ell^{[\widetilde a,1]}), \qquad 
\Xi=-\Pole_{C_2,\ell}^{[a,b]}, 
\end{equation}
where $\Pole_\ell^{[\widetilde a,1]}$ indicates the set of optimal poles and zeros --- provided by Theorem~\ref{thm:zolotarev} --- for the domain $[-1,\widetilde a] \cup [\widetilde a,1]$. This yields the following.  

\begin{corollary} \label{cor:stieltjes-zolotarev-posdef}
	Let $f(z)$ be a Cauchy-Stieltjes function with density $\mu(t)$, 
	$A,-B$ be Hermitian positive definite with spectrum contained in $[a,b]$ and $X_\ell$ the approximate evaluation of $f(z)$ returned by 
	Algorithm~\ref{alg:subspace} using the orthonormal basis $U\otimes V$ of the
	subspace 
	$\mathcal{RK}_{\ell}(A, U_F, \Pole_{C_2,\ell}^{[a,b]})) 
	\otimes\mathcal{RK}_{\ell}(B^T, V_F, - \Pole_{C_2,\ell}^{[a,b]}))$,
	where $\Pole_{C_2, \ell}^{[a,b]}$ is as in \eqref{eq:cauchy-poles}. 
	Then,  		
	\begin{equation*}
	\norm{X - X_\ell}_2 \leq 4 \cdot f(2a) \cdot(1+\kappa)\cdot \norm{F}_2 \cdot \rho_{[a, 2b]}^\ell, \qquad 
	\rho_{[a, 2b]} := \exp\left(-\frac{\pi^2}{\log\left(\frac{8b}{a}\right)}\right). 
	\end{equation*}
\end{corollary}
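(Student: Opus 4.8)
The plan is to derive the statement as a straightforward assembly of results already established, without any fresh computation. Since the poles in \eqref{eq:cauchy-poles} satisfy $\Xi = -\Pole_{C_2,\ell}^{[a,b]}$, I would invoke directly the simplified bound \eqref{eq:simplify} coming from Theorem~\ref{thm:stieltjes-zolotarev}, which gives
\[
\norm{X - X_\ell}_2 \leq f(2a)\,(1+\kappa)\,\norm{F}_2 \cdot \max_{t \geq 0} \theta_\ell\!\left(I_A,\, I_B - t,\, \Pole_{C_2,\ell}^{[a,b]}\right).
\]
Here $I_A = [a,b]$, and since $-B$ is positive definite with spectrum in $[a,b]$ we have $I_B = [-b,-a]$, hence $I_B - t = [-b-t,-a-t]$. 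Everything then reduces to showing that $\theta_\ell([a,b],\,[-b-t,-a-t],\,\Pole_{C_2,\ell}^{[a,b]}) \leq 4\rho_{[a,2b]}^\ell$ uniformly in $t \geq 0$.

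To remove the dependence on $t$, I would use the inclusion $[-b-t,-a-t] \subseteq [-\infty,-a]$, valid for every $t \geq 0$, so that the monotonicity of $\theta_\ell$ in its second argument (Lemma~\ref{lem:tp}-\ref{lem:tp:inclusion}) bounds the maximum over $t$ by $\theta_\ell([a,b],\,[-\infty,-a],\,\Pole_{C_2,\ell}^{[a,b]})$. Next I would apply the M\"obius invariance (Lemma~\ref{lem:tp}-\ref{lem:tp:mobius}) with the transformation $T$ of Lemma~\ref{lem:mobius}: repeating the orientation check of Lemma~\ref{lem:mobius1d} (that is, $T(b) = 1$ and $T(-a) + T(a) = 0$), $T$ maps $[a,b]$ onto $[\widetilde a,1]$ and $[-\infty,-a]$ onto $[-1,-\widetilde a]$, while $T(\Pole_{C_2,\ell}^{[a,b]}) = T(T^{-1}(\Pole_\ell^{[\widetilde a,1]})) = \Pole_\ell^{[\widetilde a,1]}$ by the definition \eqref{eq:cauchy-poles}. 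Hence the quantity above equals $\theta_\ell([\widetilde a,1],\,-[\widetilde a,1],\,\Pole_\ell^{[\widetilde a,1]})$, and since the set $\frac{\mathcal P_\ell}{\Pole_\ell^{[\widetilde a,1]}}$ contains the optimal Zolotarev function $r_\ell^{[\widetilde a,1]}$, Theorem~\ref{thm:zolotarev} bounds it by $4\rho_{[\widetilde a,1]}^\ell$.

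Finally, I would close the chain with the estimate $\rho_{[\widetilde a,1]} \leq \rho_{[a,2b]}$ from Lemma~\ref{lem:mobius}, giving $\norm{X - X_\ell}_2 \leq 4\, f(2a)\,(1+\kappa)\,\norm{F}_2\,\rho_{[a,2b]}^\ell$, and then observe that $\rho_{[a,2b]} = \exp(-\pi^2/\log(8b/a))$ agrees with the closed form in the statement. No serious analytic obstacle remains: the genuinely delicate step --- controlling the forward error of the parameter-dependent Sylvester equations \eqref{eq:sylvcauchy} uniformly in $t$ and integrating against $\mu$ --- has already been handled in Theorem~\ref{thm:stieltjes-zolotarev}. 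The only point that requires a little care here is the orientation bookkeeping, namely checking that $T$ sends $[a,b]$ to $[\widetilde a,1]$ (and not to $[-1,-\widetilde a]$), so that the Zolotarev poles of $r_\ell^{[\widetilde a,1]}$ pull back to exactly the prescribed set $\Pole_{C_2,\ell}^{[a,b]}$.
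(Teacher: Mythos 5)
Your proposal is correct and follows essentially the same route as the paper's proof: it reduces to \eqref{eq:simplify} via the choice $\Xi=-\Pole$, bounds the $t$-dependent set $I_B-t$ by the inclusion into $[-\infty,-a]$ using Lemma~\ref{lem:tp}-\ref{lem:tp:inclusion}, transports the problem with $T$ via Lemma~\ref{lem:tp}-\ref{lem:tp:mobius}, and concludes with Theorem~\ref{thm:zolotarev} and $\rho_{[\widetilde a,1]}\leq\rho_{[a,2b]}$ from Lemma~\ref{lem:mobius}. The extra orientation check you perform on $T$ is a harmless (and sensible) addition that the paper leaves implicit.
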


\begin{proof}
	By setting $I_A = I$, $I_B = -I$ 
	in the statement of Theorem~\ref{thm:stieltjes-zolotarev} we get \eqref{eq:simplify}, so that we just need the bound  
	\begin{align*}
	\theta_\ell(I_A, I_B-t,T^{-1}(\Pole_\ell^{[\widetilde a,1]})) & = \theta_\ell(I_A, -I_A-t,T^{-1}(\Pole_\ell^{[\widetilde a,1]}))\leq \theta_\ell(I_A, [-\infty, -a],T^{-1}(\Pole_\ell^{[\widetilde a,1]}))\\ &= \theta_\ell([\widetilde a,1], [-1,-\widetilde a],\Pole_\ell^{[\widetilde a,1]})\leq 4\rho_{[\widetilde a,1]}^\ell, 
	\end{align*}
	where the first inequality follows from Lemma~\ref{lem:tp}-\ref{lem:tp:inclusion} and the last equality from  Lemma~\ref{lem:tp}-\ref{lem:tp:mobius} applied with the map $T(z)$. 
	The claim follows combining this inequality  $\rho_{[\tilde a, 1]} \leq 
	\rho_{[a,2b]}$ from Lemma~\ref{lem:mobius}. 
\end{proof}

\begin{example} \label{ex:cauchy2d}
	We consider the same matrix $A$ of Example~\ref{ex:cauchy1d}, 
	and we evaluate the inverse square root of $\mathcal M = I \otimes A + A \otimes I$, 
	applied to a vector $v = \vect(F)$, where $F$ is a random rank $1$ matrix, 
	generated by taking the outer product of two unit vectors with normally 
	distributed entries. 
	\begin{figure}
		\begin{tikzpicture}
		\begin{semilogyaxis}[legend style={at={(1.05,1)}, anchor = north west}, xlabel = Iterations ($\ell$), 
		ylabel = {$\norm{X  -X_\ell}_2$}, width = .5\linewidth, 
		height = .3\textheight]
		\addplot[no marks, dashed, red] table[x index = 0, y index = 1] {cauchy_stieltjes_2D.dat};
		\addplot[blue, mark=*] table[x index = 0, y index = 2] {cauchy_stieltjes_2D.dat};
		\addplot[brown, mark=triangle*] table[x index = 0, y index = 3]{cauchy_stieltjes_2D.dat};
		\addplot[mark=square*, red] table[x index = 0, y index = 4]{cauchy_stieltjes_2D.dat};
		\addplot[mark=diamond*, green] table[x index = 0, y index = 6]{cauchy_stieltjes_2D.dat};
		\addplot[mark=o, black, only marks] table[x index = 0, y index = 8]{cauchy_stieltjes_2D.dat};
		\addplot[dashdotted, blue] table[x index = 0, y index = 7]{cauchy_stieltjes_2D.dat};
		\legend{Bound from Cor.~\ref{cor:stieltjes-zolotarev-posdef}, 
			Extended Krylov, Polynomial Krylov, 
			Poles from Cor.~\ref{cor:stieltjes-zolotarev-posdef}, 
			EDS,
			Singular values of $X$, Bound for sing. values (Thm~\ref{thm:singdecay})};
		\end{semilogyaxis}
		\end{tikzpicture}
		\caption{Convergence history of the different projection spaces for the evaluation of  $\mathcal M^{-\frac 12}v$ with
			the Kronecker structured matrix $\mathcal M = I \otimes A + A \otimes I$, 
			where $A$ is of size $1000 \times 1000$ and has condition number about $5 \cdot 10^5$.
			The singular values of the true solution $X$ and the bound given in Theorem~\ref{thm:singdecay} are reported as well.}
		\label{fig:exp-cauchy-2d}
	\end{figure}
	We note that, in Figure~\ref{fig:exp-cauchy-2d}, the bound from Corollary~\ref{cor:stieltjes-zolotarev-posdef} accurately 
	predicts the asymptotic convergence rate, even though it is off by a constant; 
	we believe that this is due to the artificial introduction of $(1 + \kappa)$ in the
	Galerkin projection bound, which is usually very pessimistic in practice \cite{Beckermann2011}. 
\end{example}

\subsection{Low-rank approximability of \texorpdfstring{$X$}{X}}
\label{sec:low-rank-approximability}
The Kronecker-structured
rational Krylov method that we have discussed provides a practical way to compute the
evaluation of the matrix function under consideration. However, it can be used 
also theoretically to predict the decay in the singular values of the computed matrix
$X$, and therefore to describe its approximability properties in a low-rank format.

\subsubsection{Laplace-Stieltjes functions}
In the Laplace-Stieltjes case, we may employ Corollary~\ref{cor:stieltjes-lapl-posdef}
directly to provide an estimate for the decay in the singular values. 

\begin{theorem} \label{thm:singvals-lapl-decay}
	Let $f(z)$ be a Laplace-Stieltjes function and $\mathcal M = I \otimes A - B^T \otimes I$ where $A,-B$ are Hermitian positive definite with spectra contained in $[a, b]$. Then, 
	if $\vect(X) = f(\mathcal M) \vect(F)$, with $F = U_F V_F^T$ of rank $k$, we 
	have 
	\[
	\sigma_{1 + \ell k}(X) \leq  16 \gamma_{\ell,\kappa} f(0^+)\rho_{[a,b]}^{\frac{\ell}{2}} 
	\norm{F}_2. 
	\]
\end{theorem}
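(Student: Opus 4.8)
The plan is to combine the rational Krylov error bound of Corollary~\ref{cor:stieltjes-lapl-posdef} with an Eckart--Young (best low-rank approximation) argument. The key observation is that the approximant $X_\ell$ produced by Algorithm~\ref{alg:subspace} is itself a low-rank matrix whose rank is controlled by the dimension of the tensorized rational Krylov subspaces, so a bound on $\norm{X - X_\ell}_2$ automatically controls a singular value of $X$ at the corresponding index.

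First I would recall the setup of Corollary~\ref{cor:stieltjes-lapl-posdef}: taking $U \otimes V$ as an orthonormal basis of $\mathcal{RK}_\ell(A, U_F, \Pole_\ell^{[a,b]}) \otimes \mathcal{RK}_\ell(B^T, V_F, \Pole_\ell^{[a,b]})$ and letting $X_\ell$ be the matrix returned by Algorithm~\ref{alg:subspace}, that corollary gives $\norm{X - X_\ell}_2 \leq 16\gamma_{\ell,\kappa} f(0^+)\rho_{[a,b]}^{\ell/2}\norm{F}_2$, which is precisely the right-hand side of the claimed inequality. Hence it only remains to prove $\sigma_{1+\ell k}(X) \leq \norm{X - X_\ell}_2$.

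Next I would bound the rank of $X_\ell$. By construction $X_\ell = U Y V^*$ for some $Y$, where $U$ has as many columns as $\dim \mathcal{RK}_\ell(A, U_F, \Pole_\ell^{[a,b]})$; since $U_F$ has $k$ columns, $\ell$ steps of the block rational Krylov process generate a subspace of dimension at most $\ell k$, and likewise for $V$. Therefore $\mathrm{rank}(X_\ell) \leq \ell k$. Applying the Eckart--Young theorem in the spectral norm, $\sigma_{1+\ell k}(X) = \min\{\norm{X - Z}_2 : \mathrm{rank}(Z) \leq \ell k\} \leq \norm{X - X_\ell}_2$, and chaining this with Corollary~\ref{cor:stieltjes-lapl-posdef} finishes the proof.

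I do not expect a serious obstacle here; the argument is short and structural. The only point requiring care is the rank count of $X_\ell$, i.e.\ verifying that the tensorized rational Krylov subspace underlying Algorithm~\ref{alg:subspace} has dimension at most $\ell k$ in each factor, so that the index $1 + \ell k$ in the statement matches exactly. This is purely bookkeeping on the dimensions of the (block) Krylov subspaces and on the shape of the output of Algorithm~\ref{alg:subspace}.
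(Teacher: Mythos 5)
Your argument is correct and is essentially identical to the paper's own proof: both bound $\norm{X - X_\ell}_2$ via Corollary~\ref{cor:stieltjes-lapl-posdef}, observe that $X_\ell = UYV^*$ has rank at most $\ell k$ because each factor of the tensorized block rational Krylov subspace has dimension at most $\ell k$, and conclude by Eckart--Young. No differences worth noting.
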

\begin{proof}
	We note that the approximation $X_\ell$ obtained using the rational Krylov method
	with the poles given by Corollary~\ref{cor:stieltjes-lapl-posdef} has rank (at most)
	$\ell k$, and $\norm{X- X_\ell}_2 \leq 16 \gamma_{\ell,\kappa} f(0^+)\rho_{[a,b]}^{\frac{\ell}{2}}$. 
	The claim follows by applying the Eckart-Young theorem. 
\end{proof}

\subsubsection{Cauchy-Stieltjes functions}
In the case of Cauchy-Stieltjes function, the error estimate  in Corollary~\ref{cor:stieltjes-zolotarev-posdef} would provides a result
completely analogue to Theorem~\ref{thm:singvals-lapl-decay}. However, the bound obtained this way involves the multiplicative factor $1+\kappa$; this can be avoided relying on an alternative strategy. 

The idea is to consider the close connection between the rational problem \eqref{eq:zol3} and the approximate solution returned by the \emph{factored Alternating Direction Implicit method} (fADI) \cite{Beckermann2011,Beckermann2019,benner}.
More specifically, for $t\geq 0$ let us denote with $X_t$, the solution of the shifted Sylvester equation 
\begin{equation}\label{eq:shift-eq}
(tI + A)X_t-X_tB=U_FV_F^*.
\end{equation}
In view of \eqref{eq:sylvcauchy}, $X_t$ is such that $X=\int_0^\infty X_t\mu(t)dt$. Running  fADI for $\ell$ iterations, with shift parameters $T^{-1}(\Pole_\ell^{[\widetilde a,1]})=\{\alpha_1,\dots,\alpha_\ell\}$ and $T^{-1}(-\Pole_\ell^{[\widetilde a,1]})=\{\beta_1,\dots,\beta_\ell\}$, provides an approximate solution $X^{ADI}_{\ell}(t)$ of \eqref{eq:shift-eq} such that its 
column and row span belong to the spaces 
\begin{align*}
\mathcal U_{\ell}^{\text{ADI}}(t) &= \mathcal{RK}(A, U_F, \{ \alpha_1 - t, \ldots, 
\alpha_{\ell} - t \}), &
\mathcal V_{\ell}^{\text{ADI}} &= \mathcal{RK}(B^T, V_F, \{ \beta_1, \ldots, 
\beta_{\ell} \}).
\end{align*}
Note that the space $V_{\ell}^{\text{ADI}}$ does not depend on $t$ because the right
coefficient of \eqref{eq:shift-eq} does not depend on $t$. If we denote by
$U_{\ell}^{\text{ADI}}(t)$ and $V_{\ell}^{\text{ADI}}$ orthonormal bases for these spaces, 
we have $X^{\text{ADI}}_{\ell}(t) = U_{\ell}^{\text{ADI}}(t) Y_{\ell}^{\text{ADI}}(t) (V_{\ell}^{\text{ADI}})^*$, 
and using the ADI error representation \cite{Beckermann2011,Beckermann2019} we obtain 
$
\norm{X_t - X^{\text{ADI}}_{\ell}(t)}_2 \leq \norm{X_t}_2  \rho_{[a, 2b]}^{\ell}. 
$

In particular, $X^{ADI}_{\ell}(t)$ is a uniformly good approximation of $X_t$ having rank (at most) $\ell k$ and  its low-rank factorization  has the same right factor $\forall t\geq 0$. 
\begin{theorem}\label{thm:singdecay}
	Let $f(z)$ be a Cauchy-Stieltjes function and $X = \vect^{-1}(f(\mathcal M) \vect(F))$, 
	with $\mathcal M := I \otimes A - B^T\otimes I$, where $A,-B$ are Hermitian positive definite
	with spectra contained in $[a, b]$. Then the singular values $\sigma_j(X)$ of the matrix $X$ verifies: 
	\[
	\sigma_{1+\ell k}(X) \leq 4  f(2a)  \rho_{[a,2b]}^\ell\norm{F}_2.
	\]
\end{theorem}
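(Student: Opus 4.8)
The plan is to mirror the argument behind Theorem~\ref{thm:singvals-lapl-decay}: exhibit an explicit matrix $X_\ell$ of rank at most $\ell k$ that approximates $X$ within the claimed tolerance, and then invoke the Eckart--Young theorem. The natural candidate, in view of the discussion preceding the statement, is $X_\ell := \int_0^\infty X^{\mathrm{ADI}}_\ell(t)\mu(t)\,dt$, where $X^{\mathrm{ADI}}_\ell(t)$ is the $\ell$-step fADI approximant of the solution $X_t$ of \eqref{eq:shift-eq} obtained with the shift parameters $T^{-1}(\Pole_\ell^{[\widetilde a,1]})$ on the left and $T^{-1}(-\Pole_\ell^{[\widetilde a,1]})$ on the right. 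First I would record that $X = \int_0^\infty X_t\mu(t)\,dt$ by \eqref{eq:sylvcauchy}, and that every fADI iterate factors as $X^{\mathrm{ADI}}_\ell(t) = U^{\mathrm{ADI}}_\ell(t)\,Y^{\mathrm{ADI}}_\ell(t)\,(V^{\mathrm{ADI}}_\ell)^*$ with the right factor $V^{\mathrm{ADI}}_\ell$ (of $\ell k$ columns) independent of $t$; hence $X_\ell = \big(\int_0^\infty U^{\mathrm{ADI}}_\ell(t)Y^{\mathrm{ADI}}_\ell(t)\mu(t)\,dt\big)(V^{\mathrm{ADI}}_\ell)^*$ has rank at most $\ell k$.

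Next I would estimate $\norm{X-X_\ell}_2$ by moving the norm inside the integral,
\[
\norm{X-X_\ell}_2 \le \int_0^\infty \norm{X_t - X^{\mathrm{ADI}}_\ell(t)}_2\,\mu(t)\,dt \le 4\rho_{[a,2b]}^\ell \int_0^\infty \norm{X_t}_2\,\mu(t)\,dt,
\]
where the pointwise bound $\norm{X_t - X^{\mathrm{ADI}}_\ell(t)}_2 \le 4\rho_{[a,2b]}^\ell\norm{X_t}_2$ holds uniformly in $t$ by the ADI error representation combined with Theorem~\ref{thm:zolotarev} and the inequality $\rho_{[\widetilde a,1]}\le\rho_{[a,2b]}$ of Lemma~\ref{lem:mobius} --- exactly the ingredients already assembled for Theorem~\ref{thm:stieltjes-zolotarev}. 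It then remains to control $\int_0^\infty \norm{X_t}_2\mu(t)\,dt$. Since $A$ and $-B$ are Hermitian positive definite with spectra in $[a,b]$, the solution of \eqref{eq:shift-eq} has the representation $X_t = \int_0^\infty e^{-st}e^{-sA}Fe^{sB}\,ds$, so $\norm{X_t}_2 \le \norm{F}_2\int_0^\infty e^{-s(t+2a)}\,ds = \norm{F}_2/(t+2a)$. Substituting and using the Cauchy--Stieltjes representation \eqref{eq:cauchy-func} at $z = 2a$ gives $\int_0^\infty \norm{X_t}_2\mu(t)\,dt \le \norm{F}_2\int_0^\infty \frac{\mu(t)}{t+2a}\,dt = f(2a)\norm{F}_2$, whence $\norm{X-X_\ell}_2 \le 4f(2a)\rho_{[a,2b]}^\ell\norm{F}_2$. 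Combining this with $\operatorname{rank}(X_\ell)\le\ell k$ and Eckart--Young yields $\sigma_{1+\ell k}(X) \le \norm{X-X_\ell}_2 \le 4f(2a)\rho_{[a,2b]}^\ell\norm{F}_2$.

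I expect the main obstacle to be the two structural facts used above, rather than the estimates themselves. The first is that the fADI iterates for the whole family of shifted equations \eqref{eq:shift-eq} share the single right factor $V^{\mathrm{ADI}}_\ell$ --- this is what makes the integrated approximant low-rank, and it relies on the right shift parameters being chosen independently of $t$, which is admissible because the right coefficient $B$ of \eqref{eq:shift-eq} does not depend on $t$. The second is the uniformity in $t$ of the ADI convergence rate: one must verify that, after the Möbius map $T$ of Lemma~\ref{lem:mobius}, the Zolotarev quantity governing the shifted pair of spectra is dominated by $4\rho_{[\widetilde a,1]}^\ell$ for every $t\ge 0$, which follows from the inclusion of the (shifted) spectra in $[-\infty,-a]\cup[a,b]$ together with the monotonicity and Möbius invariance of $\theta_\ell$ from Lemma~\ref{lem:tp}. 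Both of these have effectively been set up in the material preceding the theorem, so the proof is largely a matter of assembling them; the only computation requiring genuine care is obtaining $\norm{X_t}_2 \le \norm{F}_2/(t+2a)$ with $\norm{F}_2$ (rather than the weaker $\norm{F}_F$ bound coming from $\norm{(tI+\mathcal M)^{-1}}_2\le (t+2a)^{-1}$), which is why one passes through the exponential integral representation of the Sylvester solution.
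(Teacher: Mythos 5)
Your proposal is correct and follows essentially the same route as the paper's proof: the integrated fADI approximant $\int_0^\infty X^{\mathrm{ADI}}_\ell(t)\mu(t)\,dt$ with the $t$-independent right factor, the uniform ADI error bound, and Eckart--Young. The only difference is that you explicitly justify $\norm{X_t}_2 \leq \norm{F}_2/(t+2a)$ in the spectral norm via the exponential integral representation of the Sylvester solution, a step the paper uses without comment; this is a welcome bit of added rigor but not a different argument.
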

\begin{proof}
	Let us define $\widehat X_{\ell}:=\int_0^\infty X^{\text{ADI}}_{\ell}(t)\mu(t)dt=\int_0^\infty U^{\text{ADI}}_{\ell}(t)\mu(t)dt\cdot  Y^{\text{ADI}}_{\ell}(V^{\text{ADI}}_{\ell})^T$. Since 
	$V^{\text{ADI}}_{\ell}$ does not depend on $t$ we can take it out from
	the integral, and therefore $\widehat X_{\ell}$ has rank bounded by $\ell k$. Then, applying the Eckart-Young theorem we have the inequality
	\begin{align*}
	\sigma_{1+\ell s}(X) &\leq \norm{X-\widehat X_{\ell}}_2 \leq\int_0^\infty \norm{X_t-X^{\text{ADI}}_{\ell}(t)}_2\mu(t)dt 
	\leq 4 
	\int_0^\infty \rho_{[a,2b]}^\ell\norm{X_t}_2\mu(t)dt\\
	&\leq 4\int_0^\infty \frac{\mu(t)}{(t+2a)}dt\  \rho_{[a,2b]}^\ell \norm{F}_2
	=4f(2a)\rho_{[a,2b]}^\ell\norm{F}_2.
	\end{align*}
\end{proof}

\section{Conclusions, possible extensions and open problems}
\label{sec:conclusions}

We have presented a pole selection strategy for the rational Krylov methods when approximating  the action of
Laplace-Stieltjes and Cauchy-Stieltjes matrix functions on a vector. 
The poles have been shown to provide a fast convergence rate and explicit error bounds have been established. The theory of equidistributed sequences
has been used to obtained a nested sequence of poles with the same asymptotic convergence rate. Then,  the approach presented in \cite{benzi2017approximation}  that addresses the case of a matrix argument with a Kronecker sum structure has been extended to use rational Krylov subspaces. We have proposed a pole selection strategy that ensures a good exponential rate of convergence of the error norm. From the theoretical perspective we established decay bounds for the singular values of $\vect^{-1}(f(I\otimes A-B^T\otimes I)\vect(F))$ when $F$ is low-rank. This generalizes the well known low-rank approximability property of the solutions of Sylvester equations with low-rank right hand side. Also in the Kronecker
structured case, it has been shown that relying on equidistributed sequences
is an effective practical choice.

There are some research lines that naturally stem from this work. For instance, we have assumed for simplicity to be working with
Hermitian positive definite matrices. This assumption might be relaxed, by considering non-normal matrices with field of values included 
in the positive half plane. Designing an optimal pole selection for such 
problems would require the solution of Zolotarev problems on 
more general domains, and deserves further study. In addition, since the
projected problem is also non-normal, the fast diagonalization approach
for the evaluation proposed in Section~\ref{sec:projection-scheme} might 
not be applicable or stable, and therefore an alternative approach would need to 
be investigated. 

\section*{Acknowledgment}

The author wish to thank Paul Van Dooren and André Ran for 
fruitful discussions about Lemma~\ref{lem:laplinv}. 

\appendix 
\section{Proof of Theorem~\ref{thm:gal-res}}\label{app:gal-res}
According to \cite[Theorem~2.1]{Beckermann2011}, the residue $R := AX_\ell - X_\ell B - F$ can be written\footnote{In the original statement
	of \cite[Theorem~2.1]{Beckermann2011} the residual is decomposed in
	three parts; the missing term is equal to zero whenever the projection
	subspace contains the right hand side, which is indeed our case.} as
$\rho = \rho_{12} + \rho_{21}$, with
\begin{align*}
\rho_{12} &= U\cdot  {r_B^G}(A_{\ell})^{-1} \cdot F\cdot  r_B^G(B) &
\rho_{21} &= r_A^G(A) \cdot F \cdot {r_A^G}(B_{\ell})^{-1} V^*,
\end{align*}
where $r_A^G(z) := \det(zI - A_{\ell} ) / p(z)$, and 
$r_B^G(z) = \det(zI - B_{\ell}) / q(z)$, with $p(z),q(z)$ defined as in \eqref{eq:poly}. In addition, it is shown that $\rho_{12}=UU^*\rho(I-VV^*)$ and $\rho_{21}=(I-UU^*)\rho VV^*$. 

Moreover,
the proof of \cite[Theorem~2.1]{Beckermann2011} shows that, for 
any choice of $(\ell,\ell)$-rational function $r_B(x)$ with poles $z_1, \ldots, z_\ell$, we can further 
decompose $\rho_{12}$ as
$\rho_{12} = U (J_1 - J_2)$, 
where
\begin{align*}
J_1& = \frac{1}{2\pi\im}\int_{\Gamma_A}(zI-A_{\ell})^{-1}U^*F\cdot \frac{r_B(B)}{r_B(z)}V^*dz,\\
J_2&=\mathcal S_{A_\ell,B}\left( -\frac{1}{2\pi\mathbf i}\int_{\Gamma_A}(zI-A_{\ell})^{-1}U^*FV(zI-B_\ell)^{-1} \frac{r_B(B_\ell)}{r_B(z)}V^*dz\right),
\end{align*}
with $\mathcal S_{A,B}(X):=AX-XB$ and $\Gamma_A$  a path encircling once the interval $I_A$ but not $I_B$.

With a direct integration we get \[
J_1=r_B(A_\ell)^{-1}U^*F\cdot r_B(B)V^*,
\]
which yields $\norm{J_1}_2\leq \norm{F}_2\cdot \norm{r_B(A_\ell)^{-1}}_2\norm{r_B(B_\ell)}_2$. 		
Let $\widetilde B:= VB_\ell V^* -c(I-VV^*)$. Then,\footnote{The matrix
	$\tilde B$ is not used in the original proof of \cite{Beckermann2011}, which
	contains a minor typo. There, the operator $\mathcal S_{A_\ell,\tilde B}$ is 
	replaced by $\mathcal S_{A_\ell, B_\ell}$ which does not have compatible dimensions.}
\begin{align*}
\mathcal S_{A_\ell,\widetilde B}(\mathcal S_{A_\ell, B}^{-1}(J_2))&=	\mathcal S_{A_\ell,\widetilde B}\left(-\frac{1}{2\pi\im}\int_{\Gamma_A}(zI-A_{\ell})^{-1}U^*FV(zI-B_\ell)^{-1} \frac{r_B(B_\ell)}{r_B(z)}V^*dz\right)\\
&= -\frac{1}{2\pi\im}\int_{\Gamma_A}(A_\ell -zI)(zI-A_{\ell})^{-1}U^*FV(zI-B_\ell)^{-1} \frac{r_B(B_\ell)}{r_B(z)}V^*dz\\ &- \frac{1}{2\pi\mathbf i}\int_{\Gamma_A}(zI-A_{\ell})^{-1}U^*FV (zI-B_{\ell})^{-1}(zI-B_{\ell})\frac{r_B(B_\ell)}{r_B(z)}V^*dz\\
&= \frac{1}{2\pi \im} \int_{\Gamma_A}U^*FV (zI-B_{\ell})^{-1}\frac{r_B(B_\ell)}{r_B(z)}V^*dz \\
&-\frac{1}{2\pi \im} \int_{\Gamma_A}(zI-A_{\ell})^{-1}U^*FV \frac{r_B(B_\ell)}{r_B(z)}V^*dz \\ 
&=-\frac{1}{2\pi\im}\int_{\Gamma_A}(zI-A_{\ell})^{-1}U^*FV \frac{r_B(B_\ell)}{r_B(z)}V^*dz 
=-r_B(A_\ell)^{-1}U^*FVr_B(B_\ell)V^*,
\end{align*}
where we used that $V^*\widetilde B=B_\ell V^* $ and that the integral on the path $\Gamma_A$ of $(zI-B_\ell)^{-1}/r_B(z)$ vanishes.   	
Notice that $\norm{\mathcal S_{A,B}(X)}_2\leq (\norm{A}_2+\norm{B}_2)\norm{X}_2$ and $\norm{\mathcal S_{A,B}^{-1}(X)}_2\leq \norm{X}_2/\min_{i,j}|\lambda_i(A)-\lambda_j(B)|$ \cite[Theorem~2.1]{ripjohn}. We get
$
\norm{J_2}_2\leq \kappa\norm{r_B(A_\ell)^{-1}}_2\norm{r_B(B_\ell)}_2\norm{F}_2
$
and consequently
\[
\norm{\rho_{12}}\leq \norm{J_1}_2+\norm{J_2}_2\leq \left(1+\kappa\right)\norm{r_B(A_\ell)^{-1}}_2\norm{r_B(B_\ell)}_2\norm{F}_2.
\]
Taking the minimum over all $(\ell,\ell)$-rational functions with poles $\Xi$ provides
$
\norm{\rho_{12}}_2\leq(1+\kappa)\theta_\ell(I_B,I_A,\Xi)\norm{F}_2.
$
Analogously one obtains the similar estimate for $\rho_{21}$ swapping the role of $A$ and $B$. Since $\rho_{12}$ and $\rho_{21}$ have orthonormal rows and columns, we have $\norm{\rho_{12}+\rho_{21}}_2=\max\{\norm{\rho_{12}}_2,\norm{\rho_{21}}_2\}$, which concludes the proof. 

\section{Bounding an inverse Laplace transform}\label{app:lapl} The proof 
of Theorem~\ref{thm:lapl-zol} requires to bound the infinity
norm of an inverse Laplace transform of a particular
rational function, given in Lemma~\ref{lem:laplinv}. The purpose of this appendix is to
provide the details of its proof, that uses elementary arguments even though it is quite long. 

Let us consider the following functions, usually called 
\emph{sine integral} functions, that will be useful in the
following proofs:
\[
\Si(x) := \int_0^{x} \frac{\sin(t)}{t}\ dt, \qquad 
\si(x) := \int_x^{\infty} \frac{\sin(t)}{t}\ dt. 
\]
It is known that $\si(x) + \Si(x) = \frac{\pi}{2}$, and 
that $0 \leq \Si(x) \leq 1.852$ (see \cite[Section~6.16]{abramowitz1965handbook}), 
and therefore $|\si(x)| \leq \frac{\pi}{2}$. We will need
the following result, which involved integral of the 
sinc function by some particular measure. 

\begin{lemma} \label{lem:boundsinc}
	Let $g(t)$ be a decreasing and positive
	$\mathcal C^1$ 
	function over an
	interval $[0, \gamma]$. Then, the following inequality
	holds:
	\[
	\left|\int_0^\gamma \frac{\sin(s) g(s)}{s}\ ds \right|\leq
	1.852 \cdot g(0). 
	\]
\end{lemma}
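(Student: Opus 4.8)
The plan is to integrate by parts so that the oscillatory factor $\sin(s)/s$ is absorbed into the sine integral function $\Si$, and then exploit monotonicity of $g$ together with the bound $0\le \Si(x)\le 1.852$. Concretely, write $\Si'(s)=\sin(s)/s$ and integrate by parts:
\[
\int_0^\gamma \frac{\sin(s)g(s)}{s}\ ds = \Big[\Si(s)g(s)\Big]_0^\gamma - \int_0^\gamma \Si(s)g'(s)\ ds = \Si(\gamma)g(\gamma) - \int_0^\gamma \Si(s)g'(s)\ ds,
\]
using $\Si(0)=0$. Since $g$ is $\mathcal C^1$ and decreasing on $[0,\gamma]$, we have $g'(s)\le 0$, hence $-g'(s)=|g'(s)|$, and $g(\gamma)\ge 0$.

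Next I would take absolute values and use $0\le \Si(s)\le \Si_{\max}$ with $\Si_{\max}:=1.852$ (the maximum value of $\Si$, attained near $s=\pi$, as recalled from \cite[Section~6.16]{abramowitz1965handbook}); note $\Si$ stays nonnegative on all of $[0,\infty)$, so no cancellation issues arise and the crude bound $|\Si(s)|\le \Si_{\max}$ is legitimate on the whole interval. This gives
\[
\left|\int_0^\gamma \frac{\sin(s)g(s)}{s}\ ds\right| \le \Si_{\max}\, g(\gamma) + \Si_{\max}\int_0^\gamma |g'(s)|\ ds = \Si_{\max}\, g(\gamma) + \Si_{\max}\int_0^\gamma (-g'(s))\ ds.
\]
The remaining integral telescopes: $\int_0^\gamma(-g'(s))\,ds = g(0)-g(\gamma)$. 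Substituting, the $g(\gamma)$ terms cancel exactly, leaving
\[
\left|\int_0^\gamma \frac{\sin(s)g(s)}{s}\ ds\right| \le \Si_{\max}\big(g(\gamma) + g(0) - g(\gamma)\big) = \Si_{\max}\, g(0) = 1.852\, g(0),
\]
which is the claimed bound.

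There is no real obstacle here; the only point requiring a little care is making sure the integration-by-parts boundary term and the sign of $g'$ are handled so that everything collapses to $g(0)$ — this is exactly where positivity of $g$ and $\Si$, and monotonicity of $g$, are all used. One should also note that $\Si$ being bounded by its global maximum (rather than, say, by $\pi/2$) is what makes the constant sharp-ish; using $\si(x)+\Si(x)=\pi/2$ is not needed for this particular lemma, though it is used elsewhere in the appendix. If $g$ were merely assumed continuous and decreasing (not $\mathcal C^1$), one could run the same argument with a Riemann–Stieltjes integral against $dg$, but since the statement assumes $\mathcal C^1$ the elementary version above suffices.
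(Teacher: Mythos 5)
Your proof is correct and follows essentially the same route as the paper: integrate by parts against $\Si$, bound the boundary term by $1.852\,g(\gamma)$, bound the remaining integral by $1.852\,(g(0)-g(\gamma))$ using the sign of $g'$, and observe that the $g(\gamma)$ contributions cancel. No gaps.
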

\begin{proof}
	Integrating by parts yields 
	$
	I = \Si(s) g(s) \Big|_0^\gamma - 
	\int_0^\gamma \Si(s) g'(s)\ ds.
	$
	The first term is equal to $\Si(\gamma) g(\gamma)$, 
	which can be 
	bounded by $1.852 \cdot g(\gamma)$. 
	The second part can be bounded in modulus
	with
	\[
	\left|\int_0^\gamma \Si(s) g'(s)\ ds\right| \leq 
	- \max_{[0, \gamma]} |\Si(s)| \cdot \int_0^\gamma g'(s)\ ds = 
	(g(0) - g(\nu)) \max_{[0, \nu]} |\Si(s)|, 
	\]
	where we have used that $g'(s)$ is negative, so $|g'(s)| = -g'(s)$. Combining the two inequalities we have 
	\[
	|I| \leq 1.852 \cdot g(\gamma) + 1.852 \cdot (g(0) - g(\gamma)) = 
	1.852 \cdot g(0). 
	\]
\end{proof}
Given a 
set of positive real points $\gamma_j$ enclosed in a 
interval $[a, b]$ with $a > 0$, we define the
rational function 
\begin{equation} \label{eq:rs}
\widehat r(s) := \frac{1}{s} \frac{p(s)}{p(-s)}. 
\end{equation}
Note that $\widehat r(s)$ has poles enclosed in the negative
half-plane which ensures that  $\lim_{t\to\infty}\mathcal L^{-1}[\widehat r(s)]=1$. In particular $\mathcal L^{-1}[\widehat r(s)]$ is bounded on $\mathbb R_+$. 
We are now ready to prove Lemma~\ref{lem:laplinv}.

\begin{proof}[Proof of Lemma~\ref{lem:laplinv}]
	We write
	the inverse Laplace transform as follows:
	\[
	f(t) = \frac{1}{2\pi \im} \lim_{T \to \infty}\int_{-\im T}^{\im T}
	\widehat r(s) e^{st} ds. 
	\]
	The integration path needs to be chosen to keep all
	the poles on its left, including zero. Therefore, we 
	choose the path $\gamma_\epsilon$ that goes from
	$-\im T$ to $-\im \epsilon$, follows a semicircular path around $0$ 
	on the right, and then connects $\im \epsilon$ to $\im T$. It is sketched
	in the following figure:
	\begin{center}
		\begin{tikzpicture}
		\draw[dashed, gray, very thin, ->] (-4, 0) -- (4, 0);
		\draw[dashed, gray, very thin, ->] (0, -2) -- (0, 2);
		\draw[thick,->] (0,-1.5) -- (0,-0.9);
		\draw[thick] (0,-0.9) -- (0,-0.3);
		\draw[thick,->] (0,0.3) -- (0,0.9);
		\draw[thick] (0,0.9) -- (0,1.5);
		\draw[thick] (0,-0.3) arc (-90:90:0.3);
		\node at (-.1,-1.5) [anchor=east]{\small $-\im T$};
		\node at (-.1,1.5) [anchor=east]{\small $\im T$};
		\node at (-.1,-0.3) [anchor=east]{\small $-\im \epsilon$};
		\node at (-0.1,0.3) [anchor=east] {\small $\im \epsilon$};
		\end{tikzpicture}
	\end{center}
	Splitting the integral in the three segments we obtain the
	formula:
	\begin{equation} \label{eq:splitint}
	f(t) = \frac{1}{2\pi \im} \int_{\partial B(0,\epsilon) \cap \mathbb C_+} \widehat r(s) e^{st} \ ds 
	+ \lim_{T \to \infty} \left(
	\frac{1}{2\pi \im} \int_{-\im T}^{\im \epsilon} \widehat r(s) e^{st} ds + 
	\frac{1}{2\pi \im} \int_{\im \epsilon}^{\im T} \widehat r(s) e^{st} ds \right). 
	\end{equation}
	Concerning the first term, it is immediate to see that the integrand
	uniformly converges to the $1/s$ for $\epsilon \to 0$, 
	and therefore the first terms goes to $\frac{1}{2}$ for
	$\epsilon$ small. We now focus on the second part. 
	
	We can rephrase the ratio of polynomials defining 
	$r(s)$ as follows:
	\[
	\frac{p(s)}{p(-s)}=\prod_{j=1}^\ell\frac{\gamma_j-s}{\gamma_j+s},\qquad \gamma_j\in[a,b],\quad 0<a<b.
	\]	
	Then, we note that the above ratio restricted to the points
	of the form $\im s$ yields a complex number of modulus one, 
	that must have the form $\frac{p(\im s)}{p(-\im s)} = e^{\im \theta(s)}$, 
	where
	\begin{align*}
	\theta(s):=\arg\left(\frac{p(\im s)}{p(-\im s)}\right)&=\sum_{j=1}^\ell\arg(\gamma_j-\im s)-\arg(\gamma_j+\im s)
	=-2\sum_{j=1}^\ell\atan\left(\frac{s}{\gamma_j}\right)\in[-\ell\pi,\ell\pi].
	\end{align*}	
	In particular, $\lim_{s\to\infty}\theta(s)=-\ell\pi$ and for $s> 0$ it holds
	\begin{align}
	\ell\pi +\theta(s)&= \sum_{j=1}^\ell 2\left(\frac{\pi}{2}- \atan\left(\frac{s}{\gamma_j}\right)\right)
	=\sum_{j=1}^\ell 2\left(\int_0^\infty\frac{1}{1+x^2}dx-\int_0^{\frac{s}{\gamma_j}}\frac{1}{1+x^2}dx\right)\\
	&=2\sum_{j=1}^\ell \int_{\frac{s}{\gamma_j}}^\infty\frac{1}{1+x^2}dx\leq 2\sum_{j=1}^\ell \int_{\frac{s}{\gamma_j}}^\infty\frac{1}{x^2}dx
	=2\frac{\sum_{j=1}^\ell\gamma_j}s\leq \frac{2\ell b}{s}. \label{eq1}
	\end{align}	
	This allows to rephrase the integrals of \eqref{eq:splitint}
	in the more convenient form
	\[
	\frac{1}{2\pi \im} \int_{\im \epsilon}^{\im T}\widehat r(s) e^{st}\ ds  = 
	\frac{1}{2\pi \im} \int_{\epsilon}^{T} \im \cdot \widehat r(\im s) e^{\im st}\ ds = 
	\frac{(-1)^\ell}{2\pi \im} \int_{\epsilon}^{T} \frac{e^{\im(st + \theta(s))}}{s} \ ds.
	\]
	Since we are summing the integral between $[\epsilon, \infty]$
	and $[-\infty, \epsilon]$ we can drop the odd part of the
	integrand, and rewrite their sum as follows:
	\[
	\frac{(-1)^\ell}{2\pi \im} \int_{\epsilon}^{T} \frac{e^{\im(st + \theta(s))}}{s} \ ds + 	  \frac{(-1)^\ell}{2\pi \im} \int_{-T}^{\epsilon} \frac{e^{\im(st + \theta(s))}}{s} \ ds = 
	\frac{(-1)^\ell}{\pi} \int_{\epsilon}^{T} \frac{\sin(st + \theta(s))}{s} \ ds.
	\]
	The above integral is well-defined even if we let 
	$\epsilon \to 0$, we can can take the limit in \eqref{eq:splitint} which yields exactly the value 
	$\frac{1}{2}$ for the first term, and we have reduced the
	problem to estimate 
	$
	f(t) = \frac{1}{2} + \frac{(-1)^\ell}{\pi} \int_{0}^{\infty} \frac{\sin(st + \theta(s))}{s} \ ds.
	$
	To bound the integral, we split the integration domain in
	three parts:
	\begin{align*}
	\frac 1\pi\int_0^{\infty}\frac{\sin(st +\theta(s))}{s}ds = &\underbrace{\frac 1\pi\int_0^{\nu}\frac{\sin(st +\theta(s))}{s}ds}_{I_1}+  
	\underbrace{\frac 1\pi\int_\nu^{\xi}\frac{\sin(st +\theta(s))}{s}ds}_{I_2}+\underbrace{\frac 1\pi\int_\xi^{\infty}\frac{\sin(st +\theta(s))}{s}ds}_{I_3},
	\end{align*}
	where we choose $\nu = a \tan(\frac{\pi}{4\ell})$ 
	and $\xi = 4\ell b$. 
	For later use, we note that $\frac{a\pi}{4\ell} \leq \nu \leq \frac{a}{\ell}$. 
	Concerning $I_1$, we can further split the integral as
	$
	I_1 =\frac 1\pi\int_0^{\nu}\frac{\sin(st) \cos(\theta(s))}{s}ds+\frac 1\pi\int_0^{\nu}\frac{\cos(st) \sin(\theta(s))}{s}ds.
	$
	Note that $|\theta(s)| \leq 2s \sum_{j = 1}^\ell \gamma_j^{-1}$, 
	which can be obtained making use of the inequality 
	$|\atan(t)| \leq |t|$. We can bound the second integral
	term as follows:
	\[
	\frac 1\pi \left|\int_0^{\nu}\frac{\cos(st) \sin(\theta(s))}{s}ds\right|\leq 
	\frac 1\pi \int_0^{\nu}\frac{\cos(st) |\theta(s)|}{s}ds 
	\leq \nu \frac{1}{\pi} \sum_{j = 1}^\ell \gamma_j^{-1} 
	\leq \frac{1}{\pi}, 
	\]
	where we have used $\nu \leq \frac{a}{\ell}$ and 
	$\sum_j^\ell \gamma_j^{-1} \leq \frac{\ell}{a}$. The first part
	can be bounded making use of Lemma~\ref{lem:boundsinc}, by
	introducing the change of variable $y = st$, which yields 
	\[
	\frac{1}{\pi} \int_0^\nu \frac{\sin(st) \cos(\theta(s))}{s} ds
	= \frac{1}{\pi} \int_0^{t \nu} \frac{\sin(y) \cos(\theta(y/t))}{y} dy. 
	\]
	Note that on $[0, t \nu]$ the function $\cos(\theta(y/t))$
	is indeed decreasing, thanks to our choice of $\nu$, and
	therefore the above can be bounded in modulus by 
	$
	\frac{1}{\pi} \left|\int_0^{t \nu} \frac{\sin(y) \cos(\theta(y/t))}{y} dy\right| \leq \frac{1.852}{\pi}, 
	$
	where we have used that $\cos(\theta(0)) = 1$, and applied 
	Lemma~\ref{lem:boundsinc}. 
	
	Concerning $I_2$ we have 
	\[
	|I_2|=\left|\frac 1\pi\int_\nu^{\xi}\frac{\sin(st +\theta(s))}{s}ds\right|\leq \frac 1\pi\int_\nu^{\xi}\frac{1}{s}ds=\frac{1}{\pi}\log\left(\frac{\xi}{\nu}\right) \leq 
	\frac{1}{\pi}\log\left(\frac{16\ell^2 b}{a \pi}\right) 
	\]
	Concerning $I_3$, we perform the same splitting 
	for a since of a sum that we had for $I_1$, yielding 
	\[
	I_3= \underbrace{\frac 1\pi\int_\xi^{\infty}\frac{\sin(st)\cos( \theta(s))}{s}ds}_{I_4} + \underbrace{\frac 1\pi\int_\xi^{\infty}\frac{\cos(st)\sin( \theta(s))}{s}ds}_{I_5}.
	\]
	By using \eqref{eq1} we have that  $\forall s\in[\xi,\infty)$:
	\[
	\cos(\theta(s))= \cos(-\ell\pi + \varphi(s)) = (-1)^\ell \cos(\varphi(s)),\qquad 0\leq \varphi(s)\leq \frac{2\ell b}{s}.
	\]
	Using the Lagrange expression for the residual of the Taylor expansion we get
	$
	\cos(\varphi(s))\hspace{-10pt}\underbrace{=}_{\psi\in[0,\varphi(s)]}\hspace{-10pt}1-\sin(\psi(s))\varphi(s).
	$
	This enables bounding $I_4$ as follows:
	\begin{align*}
	|I_4|&= \frac 1\pi\left|\int_\xi^{\infty}\frac{\sin(st)\cos( \theta(s))}{s}ds\right|
	\leq  \frac 1\pi\left|\int_\xi^{\infty}\frac{\sin(st)}{s}ds\right| + \frac 1\pi\int_\xi^{\infty}\left|\frac{\sin(st)\sin(\psi(s))\varphi(s)}{s}\right|ds\\
	&\leq  \frac 1\pi\left|\int_\xi^{\infty}\frac{\sin(st)}{s}ds\right| + \frac 1\pi\int_\xi^{\infty}\frac{2\ell b}{s^2}ds
	=\frac{1}{\pi}\left(|\si(\xi)| + \frac{2kb}{\xi}\right) \leq 
	\frac{1}{2} + \frac{2\ell b}{\xi \pi} \leq \frac{1}{2} + \frac{1}{2\pi}.
	\end{align*}
	Analogously, for bounding $I_5$, we remark that
	by using \eqref{eq1} we have that  $\forall s\in[\xi,\infty)$:
	\[
	\sin(\theta(s))= \sin(-\ell \pi + \varphi(s)) = (-1)^\ell \sin(\varphi(s)), \qquad 0\leq \varphi(s)\leq \frac{2\ell b}{s}.
	\]
	Hence, 
	\begin{align*}
	|I_5| &\leq \frac{1}{\pi} \left| \int_\xi^\infty \frac{\cos(st) \sin(\theta(s))}{s}\ ds \right| \leq 
	\frac{1}{\pi} \int_\xi^\infty \frac{|\sin(\varphi(s))|}{s}\ ds 
	\leq \frac{1}{\pi} \int_\xi^\infty \frac{|\varphi(s)|}{s}\ ds \leq \frac{2kb}{\xi \pi} \leq \frac{1}{2\pi}. 
	\end{align*}
	Combining all these inequalities, we have
	\begin{align*}
	\norm{f(t)}_{L^\infty(\mathbb R_+)} &\leq 
	\frac{1}{2} + \frac{1}{\pi} + \frac{1.852}{\pi} + \frac{1}{\pi}\log\left(16 \ell^2 \frac{b}{a \pi}\right) + \frac{1}{2} + 
	\frac{1}{\pi} 
	\leq 2.23 + \frac{2}{\pi} \log\left(
	4\ell \cdot \sqrt{\frac{b}{\pi a}}
	\right). 
	\end{align*}
\end{proof}	

\bibliographystyle{siamplain}
\bibliography{library_new}

\begin{thebibliography}{10}

\bibitem{abramowitz1965handbook}
{\sc M.~Abramowitz and I.~A. Stegun}, {\em Handbook of mathematical functions:
  with formulas, graphs, and mathematical tables}, vol.~55, Courier
  Corporation, 1965.

\bibitem{Bartels1972}
{\sc R.~H. Bartels and G.~W. Stewart}, {\em {A}lgorithm 432: {S}olution of the
  {M}atrix {E}quation {$AX+XB=C$}}, Comm. {ACM}, 15 (1972), pp.~820--826.

\bibitem{Beckermann2011}
{\sc B.~Beckermann}, {\em An error analysis for rational {G}alerkin projection
  applied to the {S}ylvester equation}, SIAM J. Numer. Anal., 49 (2011),
  pp.~2430--2450, \url{https://doi.org/10.1137/110824590},
  \url{https://doi.org/10.1137/110824590}.

\bibitem{beckermann2009error}
{\sc B.~Beckermann and L.~Reichel}, {\em Error estimates and evaluation of
  matrix functions via the {F}aber transform}, SIAM J. Numer. Anal., 47 (2009),
  pp.~3849--3883, \url{https://doi.org/10.1137/080741744},
  \url{https://doi.org/10.1137/080741744}.

\bibitem{Beckermann2019}
{\sc B.~Beckermann and A.~Townsend}, {\em Bounds on the singular values of
  matrices with displacement structure}, SIAM Rev., 61 (2019), pp.~319--344,
  \url{https://doi.org/10.1137/19M1244433},
  \url{https://doi.org/10.1137/19M1244433}.
\newblock Revised reprint of "On the singular values of matrices with
  displacement structure" [ MR3717820].

\bibitem{benner}
{\sc P.~Benner, R.-C. Li, and N.~Truhar}, {\em On the {ADI} method for
  {S}ylvester equations}, J. Comput. Appl. Math., 233 (2009), pp.~1035--1045,
  \url{https://doi.org/10.1016/j.cam.2009.08.108},
  \url{https://doi.org/10.1016/j.cam.2009.08.108}.

\bibitem{benzi2013total}
{\sc M.~Benzi and C.~Klymko}, {\em Total communicability as a centrality
  measure}, Journal of Complex Networks, 1 (2013), pp.~124--149.

\bibitem{benzi2017approximation}
{\sc M.~Benzi and V.~Simoncini}, {\em Approximation of functions of large
  matrices with {K}ronecker structure}, Numer. Math., 135 (2017), pp.~1--26,
  \url{https://doi.org/10.1007/s00211-016-0799-9},
  \url{https://doi.org/10.1007/s00211-016-0799-9}.

\bibitem{berg}
{\sc C.~Berg}, {\em Stieltjes-{P}ick-{B}ernstein-{S}choenberg and their
  connection to complete monotonicity}, Positive Definite Functions: From
  Schoenberg to Space-Time Challenges,  (2008), pp.~15--45.

\bibitem{bernstein}
{\sc S.~Bernstein}, {\em Sur les fonctions absolument monotones}, Acta Math.,
  52 (1929), pp.~1--66, \url{https://doi.org/10.1007/BF02547400},
  \url{https://doi.org/10.1007/BF02547400}.

\bibitem{braess2012nonlinear}
{\sc D.~Braess}, {\em Nonlinear approximation theory}, vol.~7, Springer Science
  \& Business Media, 2012.

\bibitem{breiten2016low-rank}
{\sc T.~Breiten, V.~Simoncini, and M.~Stoll}, {\em Low-rank solvers for
  fractional differential equations}, Electron. Trans. Numer. Anal., 45 (2016),
  pp.~107--132.

\bibitem{druskin1998extended}
{\sc V.~Druskin and L.~Knizhnerman}, {\em Extended {K}rylov subspaces:
  approximation of the matrix square root and related functions}, SIAM Journal
  on Matrix Analysis and Applications, 19 (1998), pp.~755--771.

\bibitem{druskin2009}
{\sc V.~Druskin, L.~Knizhnerman, and M.~Zaslavsky}, {\em Solution of large
  scale evolutionary problems using rational {K}rylov subspaces with optimized
  shifts}, SIAM J. Sci. Comput., 31 (2009), pp.~3760--3780,
  \url{https://doi.org/10.1137/080742403},
  \url{https://doi.org/10.1137/080742403}.

\bibitem{druskin2010adaptive}
{\sc V.~Druskin, C.~Lieberman, and M.~Zaslavsky}, {\em On adaptive choice of
  shifts in rational {K}rylov subspace reduction of evolutionary problems},
  SIAM Journal on Scientific Computing, 32 (2010), pp.~2485--2496.

\bibitem{druskin2011adaptive}
{\sc V.~Druskin and V.~Simoncini}, {\em Adaptive rational {K}rylov subspaces
  for large-scale dynamical systems}, Systems Control Lett., 60 (2011),
  pp.~546--560, \url{https://doi.org/10.1016/j.sysconle.2011.04.013},
  \url{https://doi.org/10.1016/j.sysconle.2011.04.013}.

\bibitem{frommer2014efficient}
{\sc A.~Frommer, S.~G\"{u}ttel, and M.~Schweitzer}, {\em Efficient and stable
  {A}rnoldi restarts for matrix functions based on quadrature}, SIAM J. Matrix
  Anal. Appl., 35 (2014), pp.~661--683,
  \url{https://doi.org/10.1137/13093491X},
  \url{https://doi.org/10.1137/13093491X}.

\bibitem{guttel2013rational}
{\sc S.~G\"{u}ttel}, {\em Rational {K}rylov approximation of matrix functions:
  numerical methods and optimal pole selection}, GAMM-Mitt., 36 (2013),
  pp.~8--31, \url{https://doi.org/10.1002/gamm.201310002},
  \url{https://doi.org/10.1002/gamm.201310002}.

\bibitem{guettel2013blackbox}
{\sc S.~G\"{u}ttel and L.~Knizhnerman}, {\em A black-box rational {A}rnoldi
  variant for {C}auchy-{S}tieltjes matrix functions}, BIT, 53 (2013),
  pp.~595--616, \url{https://doi.org/10.1007/s10543-013-0420-x},
  \url{https://doi.org/10.1007/s10543-013-0420-x}.

\bibitem{hochbruck2010exponential}
{\sc M.~Hochbruck and A.~Ostermann}, {\em Exponential integrators}, Acta
  Numer., 19 (2010), pp.~209--286,
  \url{https://doi.org/10.1017/S0962492910000048},
  \url{https://doi.org/10.1017/S0962492910000048}.

\bibitem{ripjohn}
{\sc R.~A. Horn and F.~Kittaneh}, {\em Two applications of a bound on the
  {H}adamard product with a {C}auchy matrix}, Electron. J. Linear Algebra, 3
  (1998), pp.~4--12, \url{https://doi.org/10.13001/1081-3810.1010},
  \url{https://doi.org/10.13001/1081-3810.1010}.
\newblock Dedicated to Hans Schneider on the occasion of his 70th birthday.

\bibitem{kalugin2012stieltjes}
{\sc G.~A. Kalugin, D.~J. Jeffrey, R.~M. Corless, and P.~B. Borwein}, {\em
  Stieltjes and other integral representations for functions of {L}ambert {W}},
  Integral Transforms and Special Functions, 23 (2012), pp.~581--593.

\bibitem{kressner2019low}
{\sc D.~Kressner, S.~Massei, and L.~Robol}, {\em Low-rank updates and a
  divide-and-conquer method for linear matrix equations}, SIAM J. Sci. Comput.,
  41 (2019), pp.~A848--A876, \url{https://doi.org/10.1137/17M1161038},
  \url{https://doi.org/10.1137/17M1161038}.

\bibitem{massei2019fast}
{\sc S.~Massei, M.~Mazza, and L.~Robol}, {\em Fast solvers for two-dimensional
  fractional diffusion equations using rank structured matrices}, SIAM J. Sci.
  Comput., 41 (2019), pp.~A2627--A2656,
  \url{https://doi.org/10.1137/18M1180803},
  \url{https://doi.org/10.1137/18M1180803}.

\bibitem{massei2017solving}
{\sc S.~Massei, D.~Palitta, and L.~Robol}, {\em Solving rank-structured
  {S}ylvester and {L}yapunov equations}, SIAM J. Matrix Anal. Appl., 39 (2018),
  pp.~1564--1590, \url{https://doi.org/10.1137/17M1157155},
  \url{https://doi.org/10.1137/17M1157155}.

\bibitem{oseledets2007lower}
{\sc I.~V. Oseledets}, {\em Lower bounds for separable approximations of the
  {H}ilbert kernel}, Mat. Sb., 198 (2007), pp.~137--144,
  \url{https://doi.org/10.1070/SM2007v198n03ABEH003842},
  \url{https://doi.org/10.1070/SM2007v198n03ABEH003842}.

\bibitem{schweitzer2016restarting}
{\sc M.~Schweitzer}, {\em Restarting and error estimation in polynomial and
  extended {K}rylov subspace methods for the approximation of matrix
  functions}, PhD thesis, Universit{\"a}tsbibliothek Wuppertal, 2016.

\bibitem{simoncini2016computational}
{\sc V.~Simoncini}, {\em Computational methods for linear matrix equations},
  SIAM Rev., 58 (2016), pp.~377--441, \url{https://doi.org/10.1137/130912839},
  \url{https://doi.org/10.1137/130912839}.

\bibitem{susnjara2015accelerated}
{\sc A.~Susnjara, N.~Perraudin, D.~Kressner, and P.~Vandergheynst}, {\em
  Accelerated filtering on graphs using {L}anczos method}, arXiv preprint
  arXiv:1509.04537,  (2015).

\bibitem{townsend2015automatic}
{\sc A.~Townsend and S.~Olver}, {\em The automatic solution of partial
  differential equations using a global spectral method}, J. Comput. Phys., 299
  (2015), pp.~106--123, \url{https://doi.org/10.1016/j.jcp.2015.06.031},
  \url{https://doi.org/10.1016/j.jcp.2015.06.031}.

\bibitem{widder1942laplace}
{\sc D.~V. Widder}, {\em The {L}aplace {T}ransform}, Princeton Mathematical
  Series, v. 6, Princeton University Press, Princeton, N. J., 1941.

\bibitem{yang2011novel}
{\sc Q.~Yang, I.~Turner, F.~Liu, and M.~Ili\'{c}}, {\em Novel numerical methods
  for solving the time-space fractional diffusion equation in two dimensions},
  SIAM J. Sci. Comput., 33 (2011), pp.~1159--1180,
  \url{https://doi.org/10.1137/100800634},
  \url{https://doi.org/10.1137/100800634}.

\bibitem{zolotarev1877application}
{\sc E.~Zolotarev}, {\em Application of elliptic functions to questions of
  functions deviating least and most from zero}, Zap. Imp. Akad. Nauk. St.
  Petersburg, 30 (1877), pp.~1--59.

\end{thebibliography}

\end{document}